\documentclass[11pt, a4paper]{article}  
\voffset=-1.5cm \hoffset=-1.4cm \textwidth=16cm \textheight=24.0cm
\pdfoutput=1
\usepackage{hyperref}
\usepackage[numbers]{natbib}
\usepackage{algpseudocode}
\usepackage{amsthm, bm}
\usepackage{algorithm}
\usepackage{pifont}
\usepackage{dsfont}
\usepackage{amsmath}
\usepackage[nameinlink]{cleveref}
\usepackage{amssymb}

\usepackage{color}
\usepackage{csvsimple}
\usepackage{enumerate, enumitem}
\usepackage{mathrsfs} 
\usepackage{titlesec}
\usepackage{multicol, multirow, tabularx}
\usepackage{mathtools, calc}
\usepackage{verbatim}
\usepackage{listings}
\usepackage{graphicx}
\usepackage{lscape, textcomp}
\usepackage{tikz, subcaption}
\usepackage{xparse}
\usepackage{url}
\usepackage[tableposition=top]{caption}
\usepackage[toc,page]{appendix}
\usepackage[british]{babel}
\usepackage{tkz-euclide}
\urlstyle{same}
\usetikzlibrary{arrows}
\usepackage{fancyhdr} 
\usepackage{array, stackengine}
\usepackage{pbox}
\usepackage{sectsty}
\usepackage{tablefootnote, longtable}
\usepackage{epstopdf}
\sectionfont{\fontsize{12}{15}\selectfont}
\subsectionfont{\fontsize{11}{15}\selectfont}
\subsubsectionfont{\fontsize{11}{15}\selectfont}

\newcolumntype{L}[1]{>{\raggedright\let\newline\\\arraybackslash\hspace{0pt}}m{#1}}
\newcolumntype{C}[1]{>{\centering\let\newline\\\arraybackslash\hspace{0pt}}m{#1}}
\newcolumntype{R}[1]{>{\raggedleft\let\newline\\\arraybackslash\hspace{0pt}}m{#1}}

\definecolor{blue}{rgb}{0.0, 0.0,1.0}
\definecolor{niceblue}{rgb}{0, 0.5, 0.95}

\newcommand{\mtx}[1]{\boldsymbol{#1}}
\newcommand{\mvec}[1]{\boldsymbol{#1}}
\newcommand{\wt}[1]{\widetilde{#1}}

\def \est {\mathrm{est}}
\def \eff {\mathrm{eff}}

\theoremstyle{plain}
\newtheorem{theorem}{Theorem}[section]
\newtheorem{lemma}[theorem]{Lemma}
\newtheorem{proposition}[theorem]{Proposition}
\newtheorem{corollary}[theorem]{Corollary}

\newtheorem{assump}[theorem]{Assumption}
	
\theoremstyle{definition}
\newtheorem{example}[theorem]{Example}
\newtheorem{definition}[theorem]{Definition}
\newtheorem*{rem*}{Remark}
\newtheorem*{warning*}{Warning}
\newtheorem{rem}[theorem]{Remark}

\newenvironment{assumpA}[1]
  {\innerassumpA}
  {\endinnerassumpA}

\DeclareMathOperator{\range}{range}
\DeclareMathOperator{\prob}{\mathbb{P}}
\DeclareMathOperator{\vol}{Vol}

\DeclareMathOperator{\ccone}{Circ}

\DeclareMathOperator{\rint}{relint}
\DeclareMathOperator{\aff}{aff}

\DeclarePairedDelimiter{\ceil}{\lceil}{\rceil}

\def \R {\mathbb{R}}

\newsavebox\CBox

\makeatletter
\newcommand{\subalign}[2][c]{%
	\if#1c\vcenter\else\vtop\fi{%
		\Let@ \restore@math@cr \default@tag
		\baselineskip\fontdimen10 \scriptfont\tw@
		\advance\baselineskip\fontdimen12 \scriptfont\tw@
		\lineskip\thr@@\fontdimen8 \scriptfont\thr@@
		\lineskiplimit\lineskip
		\ialign{\hfil$\m@th\scriptstyle##$&$\m@th\scriptstyle{}##$\hfil\crcr
			#2\crcr
		}%
	}%
}
\makeatother

\makeatletter
\renewcommand*{\@fnsymbol}[1]{\ensuremath{\ifcase#1\or *\or \ddagger\or \mathsection\or \vee\or \wedge\or \dagger\or
		\mathsection\or \mathparagraph\or \|\or **\or \dagger\dagger
		\or \ddagger\ddagger \else\@ctrerr\fi}}

\numberwithin{equation}{section}

\begin{document}
\title{Global optimization using random embeddings}

\author{
	Coralia Cartis \thanks{The Alan Turing Institute, The British Library, London, NW1 2DB, UK. This work was supported by The Alan Turing Institute under The Engineering and Physical Sciences Research Council (EPSRC) grant EP/N510129/1 and under the Turing project scheme.} \textsuperscript{\normalfont,}\thanks{Mathematical Institute, University of Oxford, Radcliffe Observatory Quarter, Woodstock Road,
		Oxford, OX2 6GG, UK; \texttt{cartis,massart,otemissov@maths.ox.ac.uk}}
	\and Estelle Massart \thanks{National Physical Laboratory, Hampton Road, Teddington, Middlesex, TW11 0LW, UK. This author's work was supported by the National Physical Laboratory.} \textsuperscript{\normalfont,}\footnotemark[2] 
	\and
	Adilet Otemissov \footnotemark[1] \textsuperscript{\normalfont,}\footnotemark[2] 
}

\date{\today}
\maketitle
\footnotesep=0.4cm

{\small
	\begin{abstract}
	We propose a random-subspace algorithmic framework for global optimization of Lipschitz-continuous objectives, and analyse its convergence using novel tools from conic integral geometry.  X-REGO randomly projects, in a  sequential or simultaneous manner, the high-dimensional original problem  into low-dimensional  subproblems that can then be solved with any global, or even local, optimization solver. We estimate the probability that the randomly-embedded subproblem shares (approximately) the same global optimum as the original problem. This success probability is then used to show convergence of X-REGO to an approximate global solution of the original problem, under weak assumptions on the problem (having a strictly feasible global solution) and on the  solver (guaranteed to find an approximate global solution of the reduced problem with sufficiently high probability). 
    In the particular case of unconstrained objectives with low effective dimension, that only vary over a low-dimensional subspace, we propose an X-REGO variant that explores random subspaces of increasing dimension until finding the effective dimension of the problem, leading to  X-REGO globally converging after a finite number of embeddings, proportional to the effective dimension. We show numerically that this variant efficiently finds both the effective dimension and an approximate global minimizer of the original problem. 
	\end{abstract}
	
	\bigskip
	
	\begin{center}
		\textbf{Keywords:}
		global optimization, random subspaces, conic integral geometry, dimensionality reduction, functions with low effective dimension
	\end{center}
}

\maketitle

\section{Introduction} \label{sec:intro}

We address the global optimization problem
\begin{equation}
\tag{P}
\begin{aligned} \label{eq: GO}
f^* := \min_{\mvec{x}\in \mathcal{X}} & \;\; f(\mvec{x}), \\
\end{aligned}
\end{equation}
where $f: \mathcal{X} \rightarrow \mathbb{R}$ is Lipschitz continuous and possibly non-convex, and where $\mathcal{X}$ is a set with non-empty interior, and possibly unbounded, which thus includes the unconstrained case $\mathcal{X} = \mathbb{R}^D$. We propose a generic algorithmic framework, named X-REGO ($\mathcal{X}$-Random Embeddings for Global Optimization) that  (approximately) solves a sequence of realizations of the following randomized reduced problem,
\begin{equation} \label{eq: AREGO}
\tag{RP$\mathcal{X}$}
\begin{aligned}
\min_{\mvec{y}}  &\;\; f(\mtx{A}\mvec{y}+\mvec{p})  \\
\text{subject to} &\;\; \mtx{A}\mvec{y} + \mvec{p} \in \mathcal{X},
\end{aligned}
\end{equation}
where $\mtx{A}$ is a $D\times d$ Gaussian random matrix (see \Cref{def: Gaussian_matrix}) with $d \ll D$,
 and where $\mvec{p} \in \mathcal{X}$ may vary between realizations, may be arbitrary/user-defined, and provides additional flexibility that can be exploited algorithmically. The reduced problem \eqref{eq: AREGO} can be solved by any global, or even local or stochastic, optimization solver.

 When a (possibly stochastic) global solver is used in the subproblems, we prove that X-REGO converges, with probability one, to a global $\epsilon$-minimizer of \eqref{eq: GO} (namely, a feasible point $\mvec{x}$ satisfying $f(\mvec{x}) \leq f^* + \epsilon$ for some accuracy $\epsilon>0$);   we also provide estimates of the corresponding convergence rate. For this, we need to evaluate  the $\epsilon$-\emph{success} of the reduced problem \eqref{eq: AREGO}.
\begin{definition} \label{def: success_red_prob}
\eqref{eq: AREGO} is $\epsilon$-\emph{successful} if there exists $\mvec{y} \in \mathbb{R}^d$ such that $\mtx{A} \mvec{y} + \mvec{p} \in \mathcal{X}$ and  $f(\mtx{A} \mvec{y} + \mvec{p}) \leq f^* + \epsilon$, where $\epsilon>0$ is the desired/user-chosen accuracy tolerance.
\end{definition}
Equivalently, this success probability can be rephrased as follows.
\begin{center}
    \emph{What are the chances that a random low-dimensional subspace spanned by the columns of a (rectangular) Gaussian matrix contains a global $\epsilon$-minimizer of \eqref{eq: GO}?}
\end{center}
We use crucial tools from conic integral geometry to estimate the probability above. Applications of these bounds to functions with low effective dimensionality are also provided.

\subsection{Related work.}

Dimensionality reduction is essential to the efficient solution of high-dimensional optimization problems.  Sketching techniques reduce the ambient dimension of a given subspace by projecting it randomly into a lower dimensional one while preserving lengths \cite{woodruff2014}; such techniques have been used successfully for improving the efficiency of linear and nonlinear least squares (local) solvers and of those for more general sums of functions; see for example, \cite{pilanci2017,roosta2019,berahas2020, Cartis2021hash} and the references therein. Here, we sketch the problem variables/search space in order to reduce its dimension for the specific aim of global optimization;  furthermore, our results are not derived using sketching techniques but conic integral geometry ones.

 In a huge-scale setting, where full-dimensional vector operations are computationally expensive, \citeauthor{Nesterov12} \cite{Nesterov12} advocates the use of coordinate descent, a local optimization method that updates successively one of the coordinates of a candidate solution using a coordinate-wise variant of a first-order method, while keeping other coordinates fixed. Coordinate descent methods and their block counterparts have become a method of choice for many large-scale applications, see, e.g., \cite{Bach2011,Richtarik2015,Wright2015} and have been extended to random subspace descent \cite{Lacotte2019,kozak2019} that operates over a succession of random low-dimensional subspaces, not necessarily aligned with coordinate axes. See also \cite{Grishchenko2021} for a random proximal subspace descent algorithm, and \cite{gower2019, hanzely2020} for higher-order random subspace methods for local nonlinear optimization.

In local derivative-free optimization, several algorithms explore successively one-dimensional \cite{Stich2013, Nesterov2017, Bergou2020} and low-dimensional \cite{Cartis2021} random subspaces. \citeauthor{Gratton2015} \cite{Gratton2015,Gratton2019b} propose and explore a randomized version of direct search where at each iteration the function is explored along a collection of directions, i.e., one-dimensional half-spaces. \citeauthor{Golovin2020} \cite{Golovin2020} develop convergence rates to a ball of $\epsilon$-minimizers for a variant of randomized direct search for a special class of quasi-convex objectives. Their convergence analysis heavily relies on high-dimensional geometric arguments: they show that sublevel sets contain a sufficiently large ball tangent to the level set, so that at each iteration, with a given probability, sampling the next iterate from a suitable distribution centred at the current iterate decreases the cost.

Unlike the above-mentioned works, our focus here is on the \emph{global} optimization of \emph{generic Lipschitz-continuous objectives}. Stochastic global optimization methods abound, such as simulated annealing \cite{Glover2003}, random search \cite{Solis1981}, multistart methods \cite{Glover2003}, and genetic algorithms \cite{Holland1973}.  Our proposal here is connected to random search methods, namely, it can be viewed as a multi-dimensional random search, where a deterministic or stochastic method is applied to the subspace minimization. Recently, random subspace methods  have been developed/applied for the global optimization of objectives with special structure, assuming typically, low-effective dimensionality of the objective \cite{Wang2016, Binois2014, Binois2017, Kirschner19, Cartis2020, Cartis2020b, QianHuYu2016}. These functions only vary over a low-dimensional subspace, and are also called multi-ridge functions \cite{Fornasier2012,Tyagi2014}, functions with active subspaces \cite{Constantine2015}, or functions with functional sparsity when the subspace of variation is aligned with coordinate axes \cite{Wang2018b}. 
Assuming the random subspace dimension $d$ (in \eqref{eq: AREGO}) to be an overestimate of the objective's effective dimension $d_e$ (the dimension of the subspace of variation), these works have proven that one random embedding is sufficient with probability one to solve the original problem \eqref{eq: GO} in the unconstrained case ($\mathcal{X} = \mathbb{R}^d)$ \cite{Wang2016,Cartis2020} while several random embeddings are required in the constrained case \cite{Cartis2020b}. 
In particular, in \cite{Cartis2020b}, we propose an X-REGO variant that is designed specifically for the bound-constrained optimization of functions with low effective dimensionality. As such it keeps the random subspace dimension $d$ in \eqref{eq: AREGO} fixed and greater than the effective dimension which is assumed to be known. Here, X-REGO is designed and analysed for a generic objective and a possibly unbounded/unconstrained and nonconvex domain $\mathcal{X}$, and the random subspace dimension $d$ is arbitrary and allowed to vary during the optimization.

Recently, random projections have been successfully applied to highly overparametrized settings, such as in deep neural network training \cite{Li2018,Izmailov2019} and adversarial attacks in deep learning \cite{Cai2021,Ughi2021}. Though there is no theoretical guarantee at present that a precise low-dimension subspace exists in these problems, it is a reasonable assumption to make given the high dimensionality of the search space and the supporting numerical evidence. Our approach here investigates the validity of random subspace methods when low effective dimensionality is absent or unknown to the user; we find - both theoretically and numerically - that for large scale problems, such techniques are still beneficial, and furthermore, at least in the unconstrained case, they can naturally adapt and capture such special structures efficiently. We hope that this provides a general theoretical  justification to a broader application  of such techniques.

The second part of the paper applies the generic X-REGO convergence results and the \eqref{eq: AREGO} related probabilistic bounds to the case when the objective is unconstrained and has low effective dimension, but the effective dimension $d_e$ is unknown. Related results have  been proposed that aim to learn the effective subspace before \cite{Fornasier2012,Djolonga2013,Tyagi2014,Eriksson2018} or during the optimization process \cite{Garnett2014,Zhang2019,Chen2020,Demo2020};  additional costs/evaluations are needed in these approaches. Some apply a principal component analysis (PCA) to the gradient evaluated at a collection of random points \cite{Constantine2015, Eriksson2018, Demo2020}. Alternatively, \cite{Fornasier2012,Djolonga2013,Tyagi2014} recast the problem into a low-rank matrix recovery problem, and \cite{Garnett2014} proposes a Bayesian optimization algorithm that sequentially updates a posterior distribution over effective subspaces, and over the objective, using new functions evaluations. Still in the context of Bayesian optimization, \citeauthor{Zhang2019} \cite{Zhang2019} estimate the effective subspace using Sliced Inverse Regression, a supervised dimensionality reduction technique in contrast with the above-mentioned PCA, while \citeauthor{Chen2020} \cite{Chen2020} extend Sliced Inverse Regression to learn the effective subspace in a semi-supervised way. Instead, our proposed algorithm explores a sequence of random subspaces of increasing dimension until it discovers the effective dimension of the problem. Independently,  a similar idea has been recently used in sketching methods for regularized least-squares optimization \cite{Lacotte2020}. 

\paragraph{Our contributions.}

We explore the use of random embeddings for the generic global optimization problem \eqref{eq: GO}. Our proposed algorithmic framework, X-REGO, replaces \eqref{eq: GO} by a sequence of reduced random subproblems \eqref{eq: AREGO}, that are solved (possibly approximately and  probabilistically) using any global optimization solver. As such, X-REGO extends block coordinate descent and local random subspace methods to the global setting.

Our convergence analysis for X-REGO crucially relies on a lower bound on the probability of $\epsilon$-success of \eqref{eq: AREGO}, whose computation, exploiting connections between \eqref{eq: AREGO} and the field of conic integral geometry, is a key contribution of this paper\footnote{Note that this is not the first work applying conic integral geometry to optimization, see \cite{Amelunxen2011} for an application to the study of phase transitions in random convex optimization problems.}. Using asymptotic expansions of integrals, we derive interpretable lower bounds in the setting where the random subspace dimension $d$ is fixed and the original dimension $D$ grows to infinity.  In the box-constrained case $\mathcal{X} = [-1,1]^D$, we also compare  these bounds with the probability of success of the simplest random search strategy, where a point is sampled in the domain uniformly at random at each iteration. We show that when the point $\mvec{p}$ at which the random subspace is drawn is close enough to a global solution $\mvec{x}^*$ of \eqref{eq: GO}, the random subspace is more likely to intersect a ball of $\epsilon$-minimizer than finding an $\epsilon$-minimizer using random search. Provided that the reduced problem can be solved at a reasonable cost, random subspace methods are thus provably better than random search in some cases; and even more so, numerically.

In the second part of the paper, we address global optimization of functions with low effective dimension, and propose an X-REGO variant that progressively increases the random subspace dimension.  Instead of requiring a priori knowledge of the effective dimension of the objective, we show numerically that this variant is able to \emph{learn} the effective dimension of the problem. We also provide convergence results for this variant after a finite number of embeddings, using again our conic integral geometry bounds. Noticeably, these convergence results have no dependency on $D$. We compare numerically several instances of X-REGO when the reduced problem is solved using the (global and local) KNITRO solver \cite{Byrd2006}. We also discuss several strategies to choose the parameter $\mvec{p}$ in \eqref{eq: AREGO}.

\paragraph{Paper outline.}
\Cref{sec: prelim} presents the geometry of the problem, and motivates the use of conic integral geometry to estimate the probability of \eqref{eq: AREGO} being $\epsilon$-successful. \Cref{sec: Conic_integral_geomtry} summarizes key results from conic integral geometry that are used later in the paper. In \Cref{sec:est_prob_eps_success_RPX_gen_fun}, we derive lower bounds on the probability of \eqref{eq: AREGO} to be $\epsilon$-successful, obtain asymptotic expansions of this probability, and compare the search within random embeddings with random search. \Cref{sec: XREGO} presents the X-REGO algorithmic framework, and \Cref{sec:convergence} the corresponding convergence analysis. Finally, \Cref{sec: loweffdim} proposes a specific instance of X-REGO for global optimization of functions with low effective dimension, with associate convergence results, and \Cref{sec: Numerics} contains numerical illustrations.

\paragraph{Notation.}
We use bold capital letters for matrices ($\mtx{A}$) and bold lowercase letters ($\mvec{a}$) for vectors. In particular, $\mtx{I}_D$ is the $D \times D$ identity matrix and $\mvec{0}_D$, $\mvec{1}_D$ (or simply $\mvec{0}$, $\mvec{1}$) are the $D$-dimensional vectors of zeros and ones, respectively. We write $a_i$ to denote the $i$th entry of $\mvec{a}$ and write $\mvec{a}_{i:j}$, $i<j$, for the vector $(a_i \; a_{i+1} \cdots a_{j})^T$. We let  $\range(\mtx{A})$ denote the linear subspace spanned in $\mathbb{R}^D$ by the columns of $\mtx{A} \in \mathbb{R}^{D \times d}$. We write $\langle \cdot , \cdot \rangle$, $\| \cdot \|$ (or equivalently $\| \cdot \|_2$) for the usual Euclidean inner product and Euclidean norm, respectively.

Given two random variables (vectors) $x$ and $y$ ($\mvec{x}$ and $\mvec{y}$), the expression $x \stackrel{law}{=} y$ ($\mvec{x} \stackrel{law}{=} \mvec{y}$) means that $x$ and $y$ ($\mvec{x}$ and $\mvec{y}$) have the same distribution. We reserve the letter $\mtx{A}$ for a $D\times d$ Gaussian random matrix (see \Cref{def: Gaussian_matrix}). 

Given a point $\mvec{a} \in \mathbb{R}^D$ and a set $S$ of points in $\mathbb{R}^D$, we write $\mvec{a}+S$ to denote the set $\{ \mvec{a}+\mvec{s}: \mvec{s} \in S \}$. Given functions $f(x):\mathbb{R}\rightarrow \mathbb{R}$ and $g(x):\mathbb{R}\rightarrow \mathbb{R}^+$, we write $f(x) = \Theta(g(x))$ as $x \rightarrow \infty$ to denote the fact that there exist positive reals $M_1,M_2$ and a real number $x_0$ such that, for all $x \geq x_0$, $M_1g(x)\leq|f(x)| \leq M_2g(x)$.

\section{Geometric description of the problem} \label{sec: prelim}

Let $\epsilon >0$ denote the accuracy to which problem (P) is to be solved, and so let $G_\epsilon$ be the set of $\epsilon$-minimizers of \eqref{eq: GO},
\begin{equation} \label{eq: G_epsilon}
    G_\epsilon = \{ \mvec{x} \in \mathcal{X} : f(\mvec{x}) \leq f^* + \epsilon\}.
\end{equation}  
Note that, by \Cref{def: success_red_prob}, the reduced problem \eqref{eq: AREGO} is $\epsilon$-successful if and only if the intersection of the (affine) subspace $\mvec{p}+\range(\mtx{A})$ and $G_{\epsilon}$ is non-empty:
\begin{equation}\label{eq:RPX_eps_succ=subspace_cap_G_eps}
	\prob[\eqref{eq: AREGO} \ \text{is $\epsilon$-successful}] = \prob[\mvec{p}+\range(\mtx{A}) \cap G_{\epsilon} \neq \varnothing].
\end{equation}

To further characterize this probability, let us now introduce the following assumptions.
\begin{assumpA}{LipC}[Lipschitz continuity of $f$] \label{ass:f_is_Lipschitz}
The objective function $f : \mathcal{X} \to \mathbb{R}$ is Lipschitz continuous with constant $L$, i.e., there holds $|f(\mvec{x})-f(\mvec{y})| \leq L \rVert \mvec{x}-\mvec{y} \rVert_2$ for all $\mvec{x}, \mvec{y} \in \mathcal{X}$.
\end{assumpA}

\begin{assumpA}{FeasBall}[Existence of a ball of $\epsilon$-minimizers]\label{ass:B_eps_is_a_ball} 
	There exists a global minimizer $\mvec{x^*}$ of \eqref{eq: GO} that satisfies $B_{\epsilon/L}(\mvec{x}^*) \subset \mathcal{X}$, where $B_{\epsilon/L}(\mvec{x}^*)$ is the D-dimensional Euclidean ball of radius $\epsilon/L$ and centered at $\mvec{x}^*$, where $L$ is the Lipschitz constant of $f$ and $\epsilon>0$ is the desired  accuracy tolerance.
\end{assumpA}

We then have the following result.
\begin{proposition} \label{prop:lower_bound_ball}
Let \Cref{ass:f_is_Lipschitz} hold. Let $\mtx{A}$ be a $D \times d$ Gaussian matrix, $\epsilon$ a positive accuracy tolerance and $\mvec{x}^*$ any global minimizer of \eqref{eq: GO} satisfying \Cref{ass:B_eps_is_a_ball}. Let $\mvec{p} \in \mathcal{X}$ be a given vector. Then, 
\begin{equation}\label{eq:RPX_eps_succ>subspace_cap_B_eps}
	\prob[\eqref{eq: AREGO} \ \text{is $\epsilon$-successful}] \geq \prob[\mvec{p}+\range(\mtx{A}) \cap B_{\epsilon/L}(\mvec{x}^*) \neq \varnothing].
\end{equation}
\end{proposition}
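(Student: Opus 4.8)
The plan is to reduce the claimed probabilistic inequality to a single deterministic set inclusion, namely $B_{\epsilon/L}(\mvec{x}^*) \subseteq G_{\epsilon}$, and then to conclude by monotonicity of probability together with the identity \eqref{eq:RPX_eps_succ=subspace_cap_G_eps} already established above. Since \eqref{eq: AREGO} is $\epsilon$-successful if and only if $\mvec{p}+\range(\mtx{A})$ meets $G_{\epsilon}$, enlarging $B_{\epsilon/L}(\mvec{x}^*)$ to $G_{\epsilon}$ can only increase the chance of a non-empty intersection.

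First I would prove the inclusion $B_{\epsilon/L}(\mvec{x}^*) \subseteq G_{\epsilon}$. Take an arbitrary $\mvec{x} \in B_{\epsilon/L}(\mvec{x}^*)$, so that $\|\mvec{x}-\mvec{x}^*\|_2 \leq \epsilon/L$. By \Cref{ass:B_eps_is_a_ball}, the whole ball $B_{\epsilon/L}(\mvec{x}^*)$ is contained in $\mathcal{X}$; in particular $\mvec{x} \in \mathcal{X}$, so $f$ is defined at $\mvec{x}$, and both $\mvec{x}$ and $\mvec{x}^*$ lie in $\mathcal{X}$, which is where \Cref{ass:f_is_Lipschitz} applies. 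Hence $|f(\mvec{x}) - f(\mvec{x}^*)| \leq L\|\mvec{x}-\mvec{x}^*\|_2 \leq L\cdot(\epsilon/L) = \epsilon$. Since $\mvec{x}^*$ is a global minimizer, $f(\mvec{x}^*) = f^*$, and therefore $f(\mvec{x}) \leq f^* + \epsilon$, i.e. $\mvec{x} \in G_{\epsilon}$ by the definition \eqref{eq: G_epsilon}. As $\mvec{x}$ was arbitrary in the ball, $B_{\epsilon/L}(\mvec{x}^*) \subseteq G_{\epsilon}$.

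Second, I would translate this into the probability statement. The inclusion $B_{\epsilon/L}(\mvec{x}^*) \subseteq G_{\epsilon}$ implies that, for every realization of the Gaussian matrix $\mtx{A}$, if the affine subspace $\mvec{p}+\range(\mtx{A})$ intersects $B_{\epsilon/L}(\mvec{x}^*)$ then it also intersects $G_{\epsilon}$. Equivalently, the event $\{\mvec{p}+\range(\mtx{A}) \cap B_{\epsilon/L}(\mvec{x}^*) \neq \varnothing\}$ is contained in the event $\{\mvec{p}+\range(\mtx{A}) \cap G_{\epsilon} \neq \varnothing\}$, so monotonicity of $\prob$ gives $\prob[\mvec{p}+\range(\mtx{A}) \cap G_{\epsilon} \neq \varnothing] \geq \prob[\mvec{p}+\range(\mtx{A}) \cap B_{\epsilon/L}(\mvec{x}^*) \neq \varnothing]$. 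Combining this with \eqref{eq:RPX_eps_succ=subspace_cap_G_eps} yields exactly \eqref{eq:RPX_eps_succ>subspace_cap_B_eps}.

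There is essentially no analytical obstacle in this argument; the only point requiring care is the legitimacy of the Lipschitz estimate, which holds only for pairs of points lying in $\mathcal{X}$, and this is precisely what \Cref{ass:B_eps_is_a_ball} guarantees via $B_{\epsilon/L}(\mvec{x}^*) \subset \mathcal{X}$. The specific choice of radius $\epsilon/L$ is what makes the Lipschitz bound collapse to exactly $\epsilon$, so that the ball is contained in the $\epsilon$-minimizer set $G_{\epsilon}$ rather than a larger sublevel set.
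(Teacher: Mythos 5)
Your proof is correct and follows essentially the same route as the paper's: establish $B_{\epsilon/L}(\mvec{x}^*) \subseteq G_{\epsilon}$ via the Lipschitz bound (using \Cref{ass:B_eps_is_a_ball} to ensure the ball lies in $\mathcal{X}$), then conclude from \eqref{eq:RPX_eps_succ=subspace_cap_G_eps} by monotonicity of probability. Your write-up merely makes the event-inclusion step more explicit than the paper does.
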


\begin{proof}
Let $\mvec{x}^*$ be a global minimizer of $f$ in $\mathcal{X}$ satisfying \Cref{ass:B_eps_is_a_ball}, and let $\mvec{x} \in B_{\epsilon/L}(\mvec{x}^*)$. Then, $\mvec{x} \in G_{\epsilon}$ due to the Lipschitz continuity property of $f$, namely
\begin{equation}\label{eq:|f-f^*|<Leps/L}
	|f(\mvec{x}) - f(\mvec{x}^*)| \leq L\| \mvec{x} - \mvec{x}^* \|_2 \leq L \frac{\epsilon}{L} = \epsilon.
\end{equation}
The result follows then simply from \eqref{eq:RPX_eps_succ=subspace_cap_G_eps}.
\end{proof}

In the case of non-unique solutions, each global minimizer $\mvec{x}^*$ of \eqref{eq: GO} satisfying \Cref{ass:B_eps_is_a_ball} provides  a different lower bound in \Cref{prop:lower_bound_ball}. If all the balls $B_{\epsilon/L}(\mvec{x}^*)$ associated with different global minimizers are disjoint, the probability of $\epsilon$-success of \eqref{eq: AREGO} is lower bounded by the sum, over each $\mvec{x}^*$ satisfying \Cref{ass:B_eps_is_a_ball}, of the probability $\prob[ \mvec{p} + \range(\mtx{A}) \cap B_{\epsilon/L}(\mvec{x}^*) \neq \varnothing]$. In this paper,
we estimate the latter probability for an arbitrary $\mvec{x}^*$; this is a worst-case bound in the sense that it
clearly underestimates the chance of subproblem success (for a(ny) $\mvec{x}^*$) in the presence of multiple global minimizers of \eqref{eq: GO}.

Given $\mvec{x}^*$ satisfying \Cref{ass:B_eps_is_a_ball}, let us assume that $\mvec{p} \notin B_{\epsilon/L}(\mvec{x}^*)$ (otherwise, the reduced problem \eqref{eq: AREGO} is always $\epsilon$-successful, which can be seen by simply taking $\mvec{y} = \mvec{0}$). To estimate the right-hand side of \eqref{eq:RPX_eps_succ>subspace_cap_B_eps}, we first construct a set $C_{\mvec{p}}(\mvec{x}^*)$ containing the rays connecting $\mvec{p}$ with points in $B_{\epsilon/L}(\mvec{x}^*)$,
\begin{equation}\label{def:C(p)}
	\text{$C_{\mvec{p}}(\mvec{x}^*) = \{ \mvec{p}+\theta(\mvec{x}-\mvec{p}): \theta \geq 0, \mvec{x} \in B_{\epsilon/L}(\mvec{x}^*) \}$ for $\mvec{p} \notin B_{\epsilon/L}(\mvec{x}^*)$.}
\end{equation}
Note that $C_{\mvec{p}}(\mvec{x}^*)$ is a convex cone that has been translated by $\mvec{p}$ (see \Cref{fig:RPX_through_cones}). We can easily verify this fact by recalling the definition of a convex cone.

\begin{figure}[!t]
	\centering
	\begin{tikzpicture}[scale = 1]
		\fill[brown!40!white] (-1.2,0.7) -- +(26:4.2) arc (26:5.5:4.2);
		\node at(1.3,1.4) [fill=black!30!white,circle,minimum size=9mm,inner sep=0mm] {};
		
		\draw (-2,-2) rectangle (2,2);
		
		\draw[line width = 0.5pt] (-2.8,0.337) -- (3,1.652);
		\draw[line width = 1.5pt, color = red] (-2,0.519) -- (2,1.425);
		
		\draw[fill, color = blue] (1.3, 1.4) circle(1.2pt);
		\draw[fill] (0,0) circle(1.2pt);
		\draw[fill] (-1.2, 0.7) circle(1.2pt);
		
		\node at (0,-2.5) {$\mathcal{X} \subset \mathbb{R}^D$};
		\node at (-1.1,0.3) {$\mvec{p}$};
		\node at (-4.2,0.3) {$\mvec{p}+ \range(\mtx{A})$};
		\node at (1.2, 0.6) {$B_{\epsilon/L}(\mvec{x}^*)$};
		\node at (3.3, 0.7) {$C_{\mvec{p}}(\mvec{x}^*)$};

		\draw[line width = 0.5pt, ->] (0,0) -- (-1.2,0.7);
		\node at (5.8,4) {};
	\end{tikzpicture}
	\caption{Abstract illustration of the embedding of an affine $d$-dimensional subspace $\mvec{p}+\range(\mtx{A})$ into $\mathbb{R}^D$, in the case $\mathcal{X} = [-1,1]^D$. The red line represents the set of solutions along $\mvec{p}+\range(\mtx{A})$ that are contained in $\mathcal{X}$ and the blue dot represents a global minimizer $\mvec{x}^*$ of \eqref{eq: GO}. \eqref{eq: AREGO} is $\epsilon$-successful when the red line intersects $B_{\epsilon/L}(\mvec{x}^*)$. We construct a cone $C_{\mvec{p}}(\mvec{x}^*)$ in such a way that the following condition holds: $\mvec{p}+\range(\mtx{A})$ intersects $B_{\epsilon/L}(\mvec{x}^*)$ if and only if $\mvec{p}+\range(\mtx{A})$ and $C_{\mvec{p}}(\mvec{x}^*)$ share a ray.}
	\label{fig:RPX_through_cones}
\end{figure}
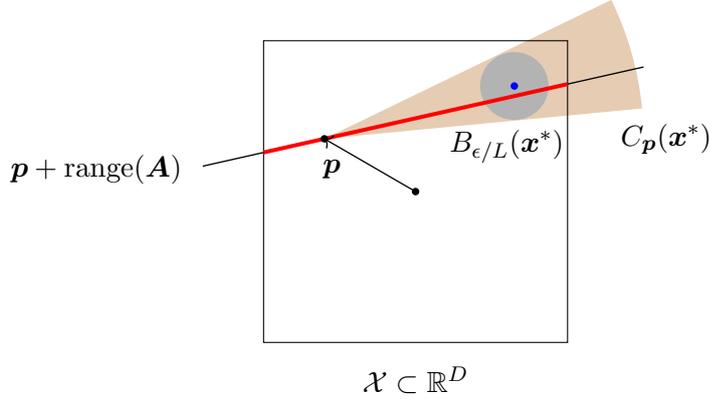

\begin{definition}\label{def:cone}
	A convex set $C$ is called a convex cone if for every $\mvec{c} \in C$ and any non-negative scalar $\rho$, $\rho \mvec{c} \in C$.
\end{definition}
\begin{rem}\label{rem:lin_subspace_is_a_cone}
	Note that, according to \Cref{def:cone}, a $d$-dimensional linear subspace in $\mathbb{R}^D$ is a cone. Hence, $\range(\mtx{A})$ is a cone.
\end{rem}
The next result indicates that, based on \eqref{eq:RPX_eps_succ>subspace_cap_B_eps} and the definition of $C_{\mvec{p}}(\mvec{x}^*)$, we can rewrite the right-hand side of \eqref{eq:RPX_eps_succ>subspace_cap_B_eps} as
\begin{equation}\label{eq:p+range(A)_cap_C_neq_p}
\prob[\mvec{p}+\range(\mtx{A}) \cap B_{\epsilon/L}(\mvec{x}^*) \neq \varnothing]=	\prob[\mvec{p}+\range(\mtx{A}) \cap C_{\mvec{p}}(\mvec{x}^*) \neq \{ \mvec{p} \}]
\end{equation} --- the probability of the event that translated cones $\mvec{p}+\range(\mtx{A})$ and $C_{\mvec{p}}(\mvec{x}^*)$ share a ray. It turns out that this probability has a quantifiable expression based on conic integral geometry, where a broad concern is the quantification/estimation of probabilities of a random cone (e.g., $\mvec{p}+\range(\mtx{A})$) and a fixed cone (e.g., $C_{\mvec{p}}(\mvec{x}^*)$) sharing a ray. We then present in \Cref{sec: Conic_integral_geomtry} key tools from conic integral geometry to help us estimate the probability of $\epsilon$-success of \eqref{eq: AREGO}.

\begin{theorem} \label{thm:RPX_eps_succ>p+range_cap_C(p)}
	Let \Cref{ass:f_is_Lipschitz} hold. Let $\mtx{A}$ be a $D \times d$ Gaussian matrix, $\epsilon$ a positive accuracy tolerance and $\mvec{x}^*$ any global minimizer of \eqref{eq: GO} satisfying \Cref{ass:B_eps_is_a_ball}. Let $\mvec{p} \in \mathcal{X}\setminus G_{\epsilon}$ be a given vector and let  $C_{\mvec{p}}(\mvec{x}^*)$ be defined in \eqref{def:C(p)}. Then,
	\begin{equation} \label{eq:RPX_eps_succ>p+range(A)_cap_C_noteq_p}
		\prob[\eqref{eq: AREGO} \ \text{is $\epsilon$-successful}] \geq \prob[\mvec{p}+\range(\mtx{A}) \cap C_{\mvec{p}}(\mvec{x}^*) \neq \{\mvec{p}\}].
	\end{equation}
\end{theorem}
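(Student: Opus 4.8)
The plan is to prove the claimed inequality by combining \Cref{prop:lower_bound_ball} with the deterministic geometric identity behind \eqref{eq:p+range(A)_cap_C_neq_p}. Since \Cref{prop:lower_bound_ball} already lower-bounds the probability that \eqref{eq: AREGO} is $\epsilon$-successful by $\prob[\mvec{p}+\range(\mtx{A})\cap B_{\epsilon/L}(\mvec{x}^*)\neq\varnothing]$ (namely \eqref{eq:RPX_eps_succ>subspace_cap_B_eps}), it suffices to prove the probability identity \eqref{eq:p+range(A)_cap_C_neq_p}, which I would get by showing that its two events coincide for \emph{every} realization of $\mtx{A}$. Writing $V=\range(\mtx{A})$ and $B=B_{\epsilon/L}(\mvec{x}^*)$, the core claim is then the purely set-theoretic equivalence
\[
(\mvec{p}+V)\cap B\neq\varnothing \iff (\mvec{p}+V)\cap C_{\mvec{p}}(\mvec{x}^*)\neq\{\mvec{p}\},
\]
valid for any linear subspace $V\subseteq\mathbb{R}^D$.

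The first step is to record that $\mvec{p}\notin B$: exactly as in the proof of \Cref{prop:lower_bound_ball}, the Lipschitz bound \eqref{eq:|f-f^*|<Leps/L} gives $B\subseteq G_{\epsilon}$, and the hypothesis $\mvec{p}\in\mathcal{X}\setminus G_{\epsilon}$ then forces $\mvec{p}\notin B$. In particular $C_{\mvec{p}}(\mvec{x}^*)$ is well-defined by \eqref{def:C(p)}; moreover $\mvec{p}\in\mvec{p}+V$ (since $\mvec{0}\in V$) and $\mvec{p}\in C_{\mvec{p}}(\mvec{x}^*)$ (take $\theta=0$ in \eqref{def:C(p)}), so $\{\mvec{p}\}$ always lies in the right-hand intersection and the condition ``$\neq\{\mvec{p}\}$'' genuinely asks for a \emph{second} common point. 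For the forward implication, given $\mvec{z}\in(\mvec{p}+V)\cap B$ the representation $\mvec{z}=\mvec{p}+1\cdot(\mvec{z}-\mvec{p})$ shows $\mvec{z}\in C_{\mvec{p}}(\mvec{x}^*)$, while $\mvec{z}\neq\mvec{p}$ because $\mvec{p}\notin B$; hence $(\mvec{p}+V)\cap C_{\mvec{p}}(\mvec{x}^*)\neq\{\mvec{p}\}$. For the converse, let $\mvec{w}\in(\mvec{p}+V)\cap C_{\mvec{p}}(\mvec{x}^*)$ with $\mvec{w}\neq\mvec{p}$ and write $\mvec{w}=\mvec{p}+\theta(\mvec{x}-\mvec{p})$ with $\theta\geq0$ and $\mvec{x}\in B$; then $\mvec{w}\neq\mvec{p}$ forces $\theta>0$. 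Since $\mvec{w}-\mvec{p}=\theta(\mvec{x}-\mvec{p})\in V$ and $V$ is closed under nonnegative scaling (it is a cone, \Cref{rem:lin_subspace_is_a_cone}), multiplying by $\theta^{-1}>0$ gives $\mvec{x}-\mvec{p}\in V$, so that $\mvec{x}\in(\mvec{p}+V)\cap B$, which is therefore nonempty. Applying this equivalence realization by realization with $V=\range(\mtx{A})$ gives \eqref{eq:p+range(A)_cap_C_neq_p}, and chaining it with \eqref{eq:RPX_eps_succ>subspace_cap_B_eps} yields \eqref{eq:RPX_eps_succ>p+range(A)_cap_C_noteq_p}.

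I do not anticipate a genuine obstacle here: the statement is elementary affine/conic geometry once the objects are in place, and all the probabilistic content is carried by \Cref{prop:lower_bound_ball}. The two points that need a little care are: (i) the bookkeeping around the apex $\mvec{p}$ --- it is essential that $\mvec{p}\notin B$, which is precisely why the hypothesis reads $\mvec{p}\in\mathcal{X}\setminus G_{\epsilon}$ rather than merely $\mvec{p}\in\mathcal{X}$, so that ``$(\mvec{p}+V)\cap C_{\mvec{p}}(\mvec{x}^*)\neq\{\mvec{p}\}$'' is not vacuously true; and (ii) in the converse direction, the use of the fact that $\range(\mtx{A})$ is a \emph{linear} subspace, hence closed under scaling, which is what lets one divide out the ray parameter $\theta$ and recover an honest point of $B$ lying on the subspace. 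One could equivalently phrase the argument directly in terms of the translated cones $\mvec{p}+\range(\mtx{A})$ and $C_{\mvec{p}}(\mvec{x}^*)$ ``sharing a ray'', matching the conic-integral-geometry language of \Cref{sec: Conic_integral_geomtry}, but the pointwise set-theoretic version above is the shortest route.
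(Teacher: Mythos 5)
Your proposal is correct and follows essentially the same route as the paper: both reduce to \Cref{prop:lower_bound_ball} and then show that the cone event implies the ball event by writing a non-apex intersection point as $\mvec{p}+\theta(\mvec{x}-\mvec{p})$ with $\mvec{x}\in B_{\epsilon/L}(\mvec{x}^*)$ and dividing out the ray parameter $\theta>0$ using the fact that $\range(\mtx{A})$ is closed under scaling. The only difference is that you prove the full two-way equivalence \eqref{eq:p+range(A)_cap_C_neq_p} (the forward direction being the easy $\theta=1$ observation), whereas the paper's proof of the theorem only establishes the single inclusion it needs.
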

\begin{proof}
	From \Cref{prop:lower_bound_ball}, we have
	\[\prob[\eqref{eq: AREGO} \ \text{is $\epsilon$-successful}] \geq \prob[\mvec{p}+\range(\mtx{A}) \cap B_{\epsilon/L}(\mvec{x}^*) \neq \varnothing]. \]
	The result follows from the fact that the event $\{\mvec{p}+\range(\mtx{A}) \cap C_{\mvec{p}}(\mvec{x}^*) \neq \{\mvec{p}\}\}$ is a subset of the event $\{ \mvec{p}+\range(\mtx{A}) \cap B_{\epsilon/L}(\mvec{x}^*) \neq \varnothing \}$. We prove this fact below.
	
	Suppose that the event $\{\mvec{p}+\range(\mtx{A}) \cap C_{\mvec{p}}(\mvec{x}^*) \neq \{\mvec{p}\}\}$ occurs. Then, there exists a point $\mvec{x}' \neq \mvec{p}$ in $\mvec{p}+\range(\mtx{A}) \cap C_{\mvec{p}}(\mvec{x}^*) $. Define $R = \{\mvec{p}+ \theta (\mvec{x}'-\mvec{p}) : \theta \geq 0\}$ and note that $ R \subset \mvec{p}+\range(\mtx{A})$. Now, since $\mvec{x}' \in C_{\mvec{p}}(\mvec{x}^*)$, by definition of $C_{\mvec{p}}(\mvec{x}^*)$ there exists $\tilde{\mvec{x}} \in B_{\epsilon/L}(\mvec{x}^*)$ and $\tilde{\theta} > 0$ such that $\mvec{x}' = \mvec{p}+ \tilde{\theta}(\tilde{\mvec{x}}-\mvec{p}) $. We express $\tilde{\mvec{x}}$ in terms of $\mvec{x}'$: $\tilde{\mvec{x}} = \mvec{p}+\theta'(\mvec{x}'-\mvec{p}) $, where $\theta' = 1/\tilde{\theta} > 0$. By definition of $R$, $\tilde{\mvec{x}} \in R$ and, thus, $\tilde{\mvec{x}}$ also lies in $\mvec{p}+\range(\mtx{A})$. This proves that the set $\{ \mvec{p}+\range(\mtx{A}) \cap B_{\epsilon/L}(\mvec{x}^*)\}$ is non-empty.
\end{proof}

\section{A snapshot of conic integral geometry} \label{sec: Conic_integral_geomtry}
A central question posed in conic integral geometry is the following:
\begin{center}
	\emph{What is the probability that a randomly rotated convex cone shares a ray with a fixed convex cone?}
\end{center} 
The answer to this question is given by the conic kinematic formula  \cite{Schneider2008}.
\begin{theorem}[Conic kinematic formula] \label{thm:Conic_kinematic_formula}
	Let $C$ and $F$ be closed convex cones in $\mathbb{R}^D$ such that at most one of them is a linear subspace. Let $\mtx{Q}$ be a $D \times D$ random orthogonal matrix drawn uniformly from the set of all $D \times D$ real orthogonal matrices. Then, 
	\begin{equation} \label{eq: conic_kin_formula}
		\prob[\mtx{Q}F \cap C \neq \{ \mvec{0} \}] = \sum_{k = 0}^{D} (1+(-1)^{k+1}) \sum_{j = k}^{D} v_k(C)v_{D+k-j}(F),
	\end{equation}
where $v_k(C)$ denotes the $k$th intrinsic volume of cone $C$.
\end{theorem}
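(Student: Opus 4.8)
The plan is to reduce the conic kinematic formula to two standard facts of conic integral geometry together with one short almost-sure argument. The first fact is the \emph{kinematic formula for conic intrinsic volumes}: for closed convex cones $C,F\subseteq\R^D$ and $\mtx{Q}$ as in the statement, $\expv_{\mtx{Q}}\bigl[v_k(C\cap\mtx{Q}F)\bigr]=\sum_{i+j=D+k}v_i(C)\,v_j(F)$ for every $k\in\{0,1,\dots,D\}$, with the convention that $v_i\equiv0$ for $i\notin\{0,\dots,D\}$. The second is the conic \emph{Gauss--Bonnet relation}: every closed convex cone $K\subseteq\R^D$ that is \emph{not} a linear subspace satisfies $\sum_{k=0}^D(-1)^kv_k(K)=0$, which, combined with the normalization $\sum_{k=0}^Dv_k(K)=1$, gives $\sum_{k\text{ odd}}v_k(K)=\tfrac12$. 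Both facts descend from spherical integral geometry --- a Crofton-type formula on the unit sphere $S^{D-1}$ and the spherical Gauss--Bonnet theorem, respectively --- and I would invoke them directly.

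The heart of the proof is the pointwise identity, valid almost surely,
\[
\mathds{1}\bigl[C\cap\mtx{Q}F\neq\{\mvec 0\}\bigr]\;=\;2\sum_{k\text{ odd}}v_k\bigl(C\cap\mtx{Q}F\bigr).
\]
Set $K=C\cap\mtx{Q}F$, which is always a closed convex cone. If $K=\{\mvec 0\}$ then $v_0(K)=1$ and $v_k(K)=0$ for $k\geq1$, so both sides vanish. If $K\neq\{\mvec 0\}$ and $K$ is not a linear subspace, the Gauss--Bonnet relation gives $\sum_{k\text{ odd}}v_k(K)=\tfrac12$ and both sides equal $1$. The only remaining possibility is that $K$ is a linear subspace of dimension $\geq1$, and I would show this has probability zero: since $\mathrm{lin}(C\cap\mtx{Q}F)=\mathrm{lin}(C)\cap\mtx{Q}\,\mathrm{lin}(F)$, the event that $C\cap\mtx{Q}F$ is a positive-dimensional subspace forces $C\cap\mtx{Q}F\subseteq\mathrm{lin}(C)\cap\mtx{Q}\,\mathrm{lin}(F)$ with the right-hand side positive-dimensional --- a non-generic configuration excluded almost surely by a transversality argument on the orthogonal group. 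This step is precisely where the hypothesis that at most one of $C,F$ is a linear subspace is used: if both were subspaces whose dimensions sum to more than $D$, then $C\cap\mtx{Q}F$ would be a positive-dimensional subspace almost surely, and the pointwise identity (and hence the formula) would fail.

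Integrating the pointwise identity over $\mtx{Q}$ gives
\[
\prob\bigl[\mtx{Q}F\cap C\neq\{\mvec 0\}\bigr]\;=\;2\sum_{k\text{ odd}}\expv_{\mtx{Q}}\bigl[v_k(C\cap\mtx{Q}F)\bigr],
\]
and substituting the kinematic formula for conic intrinsic volumes into each summand, while noting that $1+(-1)^{k+1}$ equals $2$ for odd $k$ and $0$ for even $k$ (so that the odd-indexed sum may be rewritten as a sum over all $k$ with that weight), yields \eqref{eq: conic_kin_formula}.

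The main obstacle is the first ingredient, the kinematic formula for conic intrinsic volumes: it carries all the genuine geometric content and is itself a substantial theorem. I would prove it either via a Hadwiger-type characterization of the continuous, rotation-invariant valuations on closed convex cones (which reduces the identity to the case of products of linear subspaces, where both sides are computable by hand) or by localizing the classical spherical kinematic formula of integral geometry. The secondary technical point is the almost-sure exclusion of a positive-dimensional intersection subspace in the second paragraph; the Gauss--Bonnet input and the final reindexing are routine.
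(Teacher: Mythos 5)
The paper offers no proof of this theorem to compare against: it simply cites Schneider--Weil. Your architecture --- (i) the kinematic formula for conic intrinsic volumes, (ii) conic Gauss--Bonnet, (iii) an almost-sure pointwise identity for the indicator $\mathds{1}[C\cap\mtx{Q}F\neq\{\mvec{0}\}]$, then integrate --- is exactly the standard route taken in that reference and in the other sources the paper cites, so the overall plan is sound. Two remarks on bookkeeping. First, the intrinsic-volume kinematic formula holds for $k\geq 1$ but not for $k=0$ (summing your version over all $k$ would contradict $\sum_k v_k\equiv 1$); this is harmless since you only use odd $k$, but the lemma should not be stated for $k=0$. Second, your final reindexing produces $2\sum_{k\ \mathrm{odd}}\sum_{i+j=D+k}v_i(C)v_j(F)$, i.e.\ an inner sum $\sum_{j=k}^{D}v_j(C)v_{D+k-j}(F)$, whereas \eqref{eq: conic_kin_formula} as printed has $v_k(C)v_{D+k-j}(F)$. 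The printed form appears to be a typo: specialising it to $F=\mathcal{L}_1$ and $C$ a ray in $\R^3$ gives $2v_1(C)=1$, contradicting the paper's own \Cref{cor: Crofton_formula}, which gives $2v_3(C)=0$. Your derivation recovers the (evidently intended) correct form, but as written it does not literally "yield \eqref{eq: conic_kin_formula}".

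The genuine gap is in the almost-sure exclusion of the case where $K=C\cap\mtx{Q}F$ is a positive-dimensional linear subspace. The identity $\mathrm{lin}(C\cap\mtx{Q}F)=\mathrm{lin}(C)\cap\mtx{Q}\,\mathrm{lin}(F)$ is correct, but your next step --- declaring positive-dimensionality of $\mathrm{lin}(C)\cap\mtx{Q}\,\mathrm{lin}(F)$ to be a non-generic configuration --- is false in general: it fails whenever $\dim\mathrm{lin}(C)+\dim\mathrm{lin}(F)>D$, which is perfectly compatible with the hypothesis that at most one of $C,F$ is a subspace. For instance, take $C$ and $F$ to be closed half-spaces in $\R^3$: neither is a subspace, their lineality spaces are planes, and $\mathrm{lin}(C)\cap\mtx{Q}\,\mathrm{lin}(F)$ is a line for almost every $\mtx{Q}$. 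The theorem still holds there because $C\cap\mtx{Q}F$ is almost surely a wedge, not a subspace; what must actually be proved is that $C\cap\mtx{Q}F$ almost surely \emph{strictly} contains $\mathrm{lin}(C)\cap\mtx{Q}\,\mathrm{lin}(F)$ whenever the latter is unavoidably positive-dimensional --- equivalently, that the intersection almost surely fails to satisfy $K=-K$. That argument must use the fact that the non-subspace cone properly contains its lineality space, not merely transversality of the two lineality spaces; it is the content of the general-position (``touching cones'') lemma in Schneider--Weil and is precisely where the hypothesis bites, so it cannot be dispatched by the dimension count you give. The remaining acknowledged debt, the intrinsic-volume kinematic formula itself, carries the real geometric content, and invoking it from the literature is defensible given that the paper cites the full theorem anyway.
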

\begin{proof}
	A proof can be found in \cite[p.~261]{Schneider2008}.
\end{proof}
We plan to use the conic kinematic formula to estimate \eqref{eq:p+range(A)_cap_C_neq_p}. This formula expresses the probability of the intersection of the two cones in terms of quantities known as conic intrinsic volumes. It is thus important to understand the conic intrinsic volumes and ways to compute them. 

\subsection{Conic intrinsic volumes}
Conic intrinsic volumes are commonly defined through the spherical Steiner formula (see \cite{Amelunxen2017}), which we do not define here as it is beyond the scope of this work/not needed here. Instead, we will familiarise ourselves with the conic intrinsic volumes through their properties and specific examples. This is a short introductory review of conic intrinsic volumes; for more details, an interested reader is directed to \cite{Amelunxen2017, Lotz2013, Amelunxen2011, McCoy2014, Schneider2008} and the references therein.

For a closed convex cone $C$ in $\mathbb{R}^D$, there are exactly $D+1$ conic intrinsic volumes: $v_0(C)$, $v_1(C)$,$\dots$, $v_{D}(C)$. Conic intrinsic volumes have useful properties, some of which are summarized below.  Given a closed convex cone $C \subseteq \mathbb{R}^D$, we have (see \cite[Fact 5.5]{Lotz2013}):
\begin{enumerate}[label={(\arabic*)}]
	\item \textbf{Probability distribution}. The intrinsic volumes of the cone $C$ are all nonnegative and sum up to $1$, namely
	\begin{equation} \label{eq: sum_v(C)=1}
		\text{$\sum_{k=0}^{D} v_k(C) = 1$ and $v_k(C) \geq 0$ for $k=0,1, \dots, D.$}
	\end{equation}
	In other words, they form a discrete probability distribution on $ \{0,1, \dots, D \}$.

	\item \textbf{Invariance under rotations}. Given any orthogonal matrix $\mtx{Q} \in \mathbb{R}^{D \times D}$, the intrinsic volumes of the rotated cone $\mtx{Q}C$ and the original cone $C$ are equal:
	\begin{equation}\label{eq: rotational_invariance}
		v_k(\mtx{Q}C) = v_k(C).
	\end{equation}

	\item \textbf{Gauss-Bonnet formula}. If $C$ is not a subspace, we have
	\begin{equation}\label{eq:Gauss-Bonnet}
		 \sum_{\substack{k = 0 \\ k \mbox{ }even}}^{D} v_k(C) = \sum_{\substack{k = 1 \\ k \mbox{ } odd}}^{D} v_k(C)= \frac{1}{2}.
	\end{equation}
	The Gauss-Bonnet formula implies that $v_k(C)\leq 1/2$ for any $k$.
\end{enumerate} 
\begin{rem}
	Conic intrinsic volumes can be viewed as `cousins' of the more familiar Euclidean intrinsic volumes. For a compact convex set $K$ living in $\mathbb{R}^D$, Euclidean intrinsic volumes $v_0^E(K)$, $v_{D-1}^E(K)$ and $v_D^E(K)$ have familiar geometric interpretations: $v_0^E(K)$ --- Euler characteristic, $2v_{D-1}^E(K)$ --- surface area and $v_D^E(K)$ is the usual volume. 
\end{rem}
\begin{rem}
	Conic intrinsic volumes can also be understood using polyhedral cones --- cones that can be generated by intersecting a finite number of halfspaces. If $C$ is a polyhedral cone in $\mathbb{R}^D$, then the $k$th intrinsic volume of $C$ is defined as follows (see \cite[Definition 5.1]{Lotz2013})
	\begin{equation}\label{eq:v_k(C)_of_polyhedral_cone}
		\text{$v_k(C) := \prob[\mvec{\Pi}_{C}(\mvec{a})$ lies in the relative interior\footnotemark of a $k$-dimensional face of $C]$.}
		\footnotetext{The formal definition of relative interior of a set $S$ is as follows: $\rint(S):=\{ \mvec{x} \in S: \exists \delta >0, B_{\delta}(\mvec{x}) \cap \aff(S) \subseteq S \}$, where the affine hull $\aff(S)$ is the smallest affine set containing $S$. For example, the relative interior of a line segment $[A,B]$ living in $\mathbb{R}^2$ is $(A,B)$; the relative interior of a two-dimensional square living in $\mathbb{R}^3$ is the square minus its boundary.}
	\end{equation}
	Here, $\mvec{a}$ denotes the standard Gaussian vector\footnote{A random vector for which each entry is an independent standard normal variable.} in $\mathbb{R}^D$ and $ \mvec{\Pi}_Y(\mvec{x}) := \arg \min_{\mvec{y}} \{ \| \mvec{x} - \mvec{y} \| : \mvec{y} \in Y \}  $ denotes the Euclidean/orthogonal projection of $\mvec{x}$ onto the set $Y$, namely the vector in $Y$ that is the closest to $\mvec{x}$.
\end{rem}
\begin{figure}[!t]
	\centering
	\begin{tikzpicture}
		\fill[blue!40!white] (0,0) -- +(30:3) arc (30:-30:3);
		\fill[red!40!white] (0,0) -- +(120:3) arc (120:240:3);
		
		\draw[color=black] (30:0.5) arc (30:-30:0.5);
		\draw[color=black] (120:0.5) arc (120:240:0.5);
		
		\draw[fill] (0,0) circle(1.2pt);
		\draw[fill] (1,2) circle(1pt);
		\draw[fill] (1.616,0.933) circle(1pt);
		
		\draw[-] (0,0) -- (2.598,1.5);
		\draw[-] (0,0) -- (2.598,-1.5);
		\draw[-] (0,0) -- (-1.5,2.598);
		\draw[-] (0,0) -- (-1.5,-2.598);
		
		\draw[->] (1,2) -- (1.59,0.97);
		
		\draw[rotate around={30:(0,0)}] (0,0) rectangle (0.25,0.25);
		\draw[rotate around={240:(0,0)}] (0,0) rectangle (0.25,0.25);
		
		\node at (3,-1.8) {$C_{\pi/3}$};
		\node at (-3,-1.8) {$C_{\pi/3}^{\circ}$};
		\node at (0.8,0) {$\displaystyle \frac{\pi}{3}$};
		\node at (-0.8,0) {$\displaystyle \frac{2\pi}{3}$};
		\node at (0.7,2) {\scalebox{0.85}{$\mvec{a}$} };
		\node at (0.75,0.95) {\scalebox{0.85}{$\mvec{\Pi}_{C_{\pi/3}}(\mvec{a})$}};
	\end{tikzpicture}
	\caption{A depiction of the two-dimensional polyhedral cone $C_{\pi/3}$ in \Cref{ex:polyhedral_cone}. The projection $\mvec{\Pi}_{C_{\pi/3}}(\mvec{a})$ of $\mvec{a}$ onto $C_{\pi/3}$ falls onto the one-dimensional face of the cone.}
	\label{fig:polyhedral_cone_example}
\end{figure}
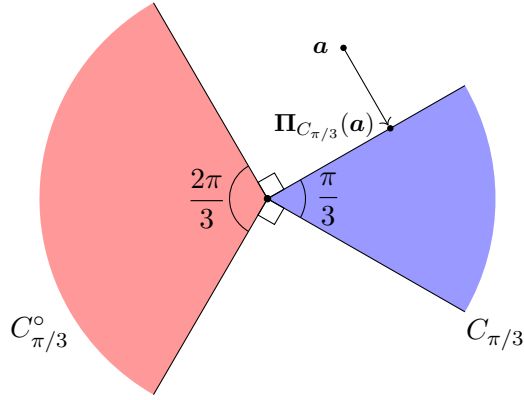
\begin{example}\label{ex:polyhedral_cone}
	Let us consider a simple a two-dimensional polyhedral cone $C_{\pi/3}$ illustrated in \Cref{fig:polyhedral_cone_example} and let us calculate $v_0(C_{\pi/3})$, $v_1(C_{\pi/3})$ and $v_2(C_{\pi/3})$ using \eqref{eq:v_k(C)_of_polyhedral_cone}. 
	
	The cone $C_{\pi/3}$ has a single two-dimensional face (filled with blue), which is the interior of $C_{\pi/3}$. If a random vector $\mvec{a}$ falls inside this face then $\mvec{\Pi}_{C_{\pi/3}}(\mvec{a}) = \mvec{a}$ and, therefore,
	$$ v_2(C_{\pi/3}) = \prob[\mvec{a} \in C_{\pi/3}] = \frac{\pi/3}{2\pi} = \frac{1}{6}. $$
	Let us now calculate $v_0(C_{\pi/3})$. Note that $C_{\pi/3}$ has only one zero-dimensional face, which is the origin. Note also that $\mvec{\Pi}_{C_{\pi/3}}(\mvec{a}) = \mvec{0}$ if and only if $\mvec{a} \in C_{\pi/3}^{\circ}$. Hence,
	$$ v_0(C_{\pi/3}) = \prob[\mvec{a} \in C_{\pi/3}^{\circ}] = \frac{2\pi/3}{2\pi} = \frac{1}{3}. $$
	To calculate $v_1(C_{\pi/3})$, we simply use \eqref{eq: sum_v(C)=1} to obtain 
	$$ v_1(C_{\pi/3}) = 1-v_0(C_{\pi/3})-v_2(C_{\pi/3}) = \frac{1}{2}.$$
\end{example}
\begin{example}[Linear subspace]
	The $k$th intrinsic volume of a $d$-dimensional linear subspace $\mathcal{L}_d$ in $\mathbb{R}^D$ is given by
	\begin{equation}\label{eq: int_vol_lin_subspace}
		v_k(\mathcal{L}_d) = \begin{cases}
			1 & \mbox{if $k = d$,} \\
			0 & \mbox{otherwise.}
		\end{cases}
	\end{equation}
	We already mentioned in \Cref{rem:lin_subspace_is_a_cone} that a $d$-dimensional linear subspace $\mathcal{L}_d$ is a cone. In fact, $\mathcal{L}_d$ is a polyhedral cone which has only one ($d$-dimensional) face. Therefore, the projection of any vector in $\mathbb{R}^D$ onto $\mathcal{L}_d$ will always lie on its (only) $d$-dimensional face. Hence, \eqref{eq: int_vol_lin_subspace} follows from \eqref{eq:v_k(C)_of_polyhedral_cone}.
\end{example}

\begin{example}[Circular cone] A circular cone is another important example; they have a number of applications in convex optimization (see, e.g., \cite[Section 3]{BenTal2001} and \cite[Section 4]{Boyd2004}). The circular cone of angle $\alpha$ in $\mathbb{R}^D$ is denoted by $\ccone_D(\alpha)$ and is defined as 
	\begin{equation} \label{eq: ccone}
	    \ccone_D(\alpha) := \{ \mvec{x} \in \mathbb{R}^D : x_1 \geq \| \mvec{x} \| \cos(\alpha) \} \mbox{ for $0\leq \alpha \leq \pi/2$}.
	\end{equation} 	
	The circular cone can be viewed as a collection of rays connecting the origin and some $D$-dimensional ball which does not contain the origin in its interior. The intrinsic volumes of $\ccone_D(\alpha)$ are given by the formulae (see \cite[Appendix D.1]{Lotz2013}): 
	\begin{equation} \label{eq: d_intr_vol_ccone}
		v_k(\ccone_D(\alpha)) = \frac{1}{2} \binom{(D-2)/2}{(k-1)/2} \sin^{k-1}(\alpha) \cos^{D-k-1}(\alpha)
	\end{equation}
	for $k = 1,2,\dots, D-1$, where $\binom{i}{j}$ is the extension of the binomial coefficient to noninteger $i$ and $j$ through the gamma function,
	\begin{equation}\label{eq:general_binom_coeff}
		\binom{i}{j} = \frac{\Gamma(i+1)}{\Gamma(j+1) \Gamma(i-j+1)}.
	\end{equation}
	The $0$th and $D$th intrinsic volumes of the circular cone are given by (see \cite[Ex. 4.4.8]{Amelunxen2011}): 
	\begin{align}
		v_0(\ccone_D(\alpha)) & = \frac{D-1}{2} \binom{(D-2)/2}{-1/2}\int_{0}^{\pi/2 - \alpha} \sin^{D-2}(x) dx, \label{eq: 0_intr_vol_ccone} \\
		v_D(\ccone_D(\alpha)) & = \frac{D-1}{2} \binom{(D-2)/2}{(D-1)/2}\int_{0}^{\alpha} \sin^{D-2}(x) dx. \label{eq: D_intr_vol_ccone}
	\end{align}
\end{example}
The following property of circular cones will be needed later.  
\begin{lemma}\label{lemma:circ(alpha)<circ(beta)_if_alpha<beta}
	Let $\ccone_D(\alpha)$ and $\ccone_D(\beta)$ be two circular cones with $0 \leq \alpha \leq \beta \leq \pi/2$. Then, $\ccone_D(\alpha) \subseteq \ccone_D(\beta)$. 
\end{lemma}
\begin{proof}
	Let $\mvec{v}$ be any point in $\ccone_D(\alpha)$. By definition of $\ccone_D(\alpha)$, $ v_1 \geq \| \mvec{v} \| \cos(\alpha)$. Since $0 \leq \alpha \leq \beta \leq \pi/2$, it follows that $ v_1 \geq \| \mvec{v} \| \cos(\beta)$, which by definition of $\ccone_D(\beta)$, implies that $\mvec{v}$ must also lie in $\ccone_D(\beta)$.
\end{proof}

\subsection{The Crofton formula}
We now present a useful corollary of the conic kinematic formula. If one of the cones in \Cref{thm:Conic_kinematic_formula} is given by a linear subspace then the conic kinematic formula reduces to the Crofton formula.
\begin{corollary}[Crofton formula] \label{cor: Crofton_formula}
	Let $C$ be a closed convex cone in $\mathbb{R}^D$ and $\mathcal{L}_d$ be a $d$-dimensional linear subspace. Let $\mtx{Q}$ be a $D \times D$ random orthogonal matrix drawn uniformly from the set of all $D \times D$ real orthogonal matrices. We have
	\begin{equation}\label{eq:Crofton_formula}
		\prob[\mtx{Q}\mathcal{L}_d \cap C \neq \{ \mvec{0} \} ] = 2 h_{D-d+1},
	\end{equation}
	with 
	\begin{equation} \label{eq: def_h_k}
	h_{D-d+1} := \begin{cases}
			v_{D-d+1}(C) + v_{D-d+3}(C) + \cdots + v_D(C) & \mbox{if $d$ is odd,}  \\
			v_{D-d+1}(C) + v_{D-d+3}(C) + \cdots + v_{D-1}(C) & \mbox{if $d$ is even.} 
		\end{cases}
	\end{equation}
\end{corollary}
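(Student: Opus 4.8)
The plan is to obtain \Cref{cor: Crofton_formula} as a direct specialization of the conic kinematic formula (\Cref{thm:Conic_kinematic_formula}) to the case where one of the two cones is a linear subspace, exploiting the fact that the intrinsic volumes of a $d$-dimensional subspace are concentrated at the single index $d$.

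First, I would invoke \Cref{thm:Conic_kinematic_formula} with the randomly rotated cone taken to be $\mathcal{L}_d$ and the fixed cone taken to be $C$. Its hypothesis that at most one of the two cones is a linear subspace is met, since $\mathcal{L}_d$ is a subspace while $C$ is not (the corollary tacitly assumes this; the subspace--subspace case, if one wishes to include it, is elementary and treated separately). This immediately delivers the left-hand side $\prob[\mtx{Q}\mathcal{L}_d \cap C \neq \{\mvec{0}\}]$ together with the kinematic double sum on the right.

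Next, I would substitute the intrinsic volumes of $\mathcal{L}_d$ from \eqref{eq: int_vol_lin_subspace}, namely $v_m(\mathcal{L}_d)=1$ if $m=d$ and $v_m(\mathcal{L}_d)=0$ otherwise. Each inner sum over $j$ then collapses to (at most) a single surviving term, the one whose subspace-index equals $d$. One must check that this term is actually in range, which --- using $d\le D$ --- reduces to the constraint $k\le d$ on the outer index; for such $k$, the surviving contribution is the intrinsic volume of $C$ whose index works out to $D-d+k$. Hence the right-hand side becomes the single alternating sum $\sum_{k=0}^{d}\big(1+(-1)^{k+1}\big)\,v_{D-d+k}(C)$, and since $1+(-1)^{k+1}$ equals $2$ for odd $k$ and $0$ for even $k$, only odd $k\in\{1,\dots,d\}$ survive, each weighted by $2$.

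Finally, I would reindex by $\ell=D-d+k$: as $k$ runs over the odd integers in $\{1,\dots,d\}$, $\ell$ runs over $D-d+1,\,D-d+3,\dots$ in steps of two, terminating at $\ell=D$ when $d$ is odd and at $\ell=D-1$ when $d$ is even. Comparing with \eqref{eq: def_h_k}, this sum is exactly $2h_{D-d+1}$, which proves the claim. The argument is purely combinatorial with no analytic content; the only place that requires care --- and the main (mild) obstacle --- is the index/parity bookkeeping: correctly identifying the surviving term in the inner sum, verifying its admissibility constraint, and checking that the resulting arithmetic progression of intrinsic-volume indices ends at $v_D(C)$ or $v_{D-1}(C)$ according to the parity of $d$.
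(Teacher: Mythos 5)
Your proof is correct and follows exactly the route the paper indicates: the paper supplies no formal proof, only the remark that the corollary ``is easily derived from \eqref{eq: conic_kin_formula} using the fact that the $k$th intrinsic volume of a linear subspace $\mathcal{L}_d$ is $1$ if $d = k$ and $0$ otherwise,'' which is precisely the substitution and parity bookkeeping you carry out in detail. One small observation: your computation tacitly reads the summand in \eqref{eq: conic_kin_formula} as $v_j(C)v_{D+k-j}(F)$ rather than the paper's literal $v_k(C)v_{D+k-j}(F)$ (otherwise the surviving term would be $v_k(C)$, not $v_{D-d+k}(C)$); this is the standard form of the conic kinematic formula and is the reading required to arrive at $2h_{D-d+1}$, so your index bookkeeping is the correct one and the discrepancy is a typo in the paper's display, not a gap in your argument.
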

The Crofton formula is easily derived from \eqref{eq: conic_kin_formula} using the fact that the $k$th intrinsic volume of a linear subspace $\mathcal{L}_d$ is $1$ if $d = k$ and $0$ otherwise.  The Crofton formula will be essential in estimating the probability of $\epsilon$-success of \eqref{eq: AREGO}.

\section{Bounding the probability of $\epsilon$-success of the reduced problem \eqref{eq: AREGO}} \label{sec:est_prob_eps_success_RPX_gen_fun}
Building on the tools developed in the last section, we can estimate the right-hand side of \eqref{eq:RPX_eps_succ>p+range(A)_cap_C_noteq_p} in \Cref{thm:RPX_eps_succ>p+range_cap_C(p)}, and thereby obtain bounds on the probability of $\epsilon$-success of \eqref{eq: AREGO}.

Note that if $\mvec{p} \notin B_{\epsilon/L}(\mvec{x}^*)$, then $C_{\mvec{p}}(\mvec{x}^*)$ defined in \eqref{def:C(p)} is a circular cone $\ccone_D(\alpha^*_{\mvec{p}})$ with $\alpha^*_{\mvec{p}} = \arcsin(\epsilon/(L\| \mvec{x}^* - \mvec{p} \|))$ that has been rotated and then translated by $\mvec{p}$, see \eqref{eq: ccone}. Therefore, the intersection $\mvec{p}+\range(\mtx{A}) \cap C_{\mvec{p}}(\mvec{x}^*)$ in \eqref{eq:RPX_eps_succ>p+range(A)_cap_C_noteq_p} is that of a random $d$-dimensional linear subspace and the rotated circular cone both translated by $\mvec{p}$. We can translate these `cones' back to the origin and then, using the Crofton formula, evaluate the right-hand side of \eqref{eq:RPX_eps_succ>p+range(A)_cap_C_noteq_p} exactly since the expressions for the conic intrinsic volumes of the circular cone $C_{\mvec{p}}(\mvec{x}^*)$ are known (see \eqref{eq: d_intr_vol_ccone}, \eqref{eq: 0_intr_vol_ccone} and \eqref{eq: D_intr_vol_ccone}). The Crofton formula and the right-hand side of \eqref{eq:RPX_eps_succ>p+range(A)_cap_C_noteq_p} only differ in the formulation of a random linear subspace: in the former, a random linear subspace is given as $\mtx{Q}\mathcal{L}_d$, whereas in \eqref{eq:RPX_eps_succ>p+range(A)_cap_C_noteq_p} it is represented by $\range(\mtx{A})$. The following theorem states that these two representations are equivalent.
\begin{theorem} \label{thm:A = QL}
	Let $\mtx{A} \in \mathbb{R}^{D \times d}$ be a Gaussian matrix. Let $\mtx{Q}$ be a $D \times D$ random orthogonal matrix drawn uniformly from the set of all $D \times D$ real orthogonal matrices and let $\mathcal{L}_d$ be a $d$-dimensional linear subspace in $\mathbb{R}^D$. Then,
	\begin{equation}
		\range(\mtx{A}) \stackrel{law}{=} \mtx{Q}\mathcal{L}_d.
	\end{equation}  
\end{theorem}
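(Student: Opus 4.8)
The plan is to show that both $\range(\mtx{A})$ and $\mtx{Q}\mathcal{L}_d$ induce the \emph{same} probability measure on the set $G(d,D)$ of $d$-dimensional linear subspaces of $\mathbb{R}^D$ (the Grassmannian), namely the unique rotation-invariant one, and then read off equality in law.

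First I would check that $\range(\mtx{A})$ is genuinely an element of $G(d,D)$ with probability one: the Gram determinant $\det(\mtx{A}^T\mtx{A})$ is a nonzero polynomial in the entries of $\mtx{A}$, so it vanishes only on a Lebesgue-null set, and since the Gaussian law is absolutely continuous we get $\rank(\mtx{A})=d$ almost surely. Hence $\mtx{A}\mapsto\range(\mtx{A})$ is (off a null set) a well-defined Borel map into $G(d,D)$, and it pushes the Gaussian law of $\mtx{A}$ forward to some Borel probability measure $\nu$ on $G(d,D)$.

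Next I would show that $\nu$ is invariant under the natural $O(D)$-action on $G(d,D)$. For a fixed orthogonal $\mtx{Q}_0 \in \mathbb{R}^{D\times D}$, the columns of $\mtx{A}$ are i.i.d.\ standard Gaussian vectors in $\mathbb{R}^D$ and the standard Gaussian on $\mathbb{R}^D$ is invariant under orthogonal maps, so $\mtx{Q}_0\mtx{A} \stackrel{law}{=} \mtx{A}$; applying $\range(\cdot)$ and using $\range(\mtx{Q}_0\mtx{A}) = \mtx{Q}_0\range(\mtx{A})$ gives $\mtx{Q}_0\range(\mtx{A}) \stackrel{law}{=} \range(\mtx{A})$, i.e.\ $\nu$ is $O(D)$-invariant. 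Since $O(D)$ is compact and acts transitively on the compact homogeneous space $G(d,D)$, the $O(D)$-invariant Borel probability measure on $G(d,D)$ is unique; call it $\mu_{d,D}$, so $\nu = \mu_{d,D}$. On the other hand, the law of $\mtx{Q}\mathcal{L}_d$ with $\mtx{Q}$ drawn from Haar measure on $O(D)$ is, by definition, the pushforward of Haar measure under the orbit map $\mtx{Q}\mapsto \mtx{Q}\mathcal{L}_d$; left-invariance of Haar measure makes this pushforward $O(D)$-invariant, so it too equals $\mu_{d,D}$. Comparing the two, $\range(\mtx{A}) \stackrel{law}{=} \mtx{Q}\mathcal{L}_d$.

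I expect the main obstacle to be purely the measure-theoretic bookkeeping rather than any substantive computation: one must set up $G(d,D)$ as a compact metrizable homogeneous $O(D)$-space so that ``unique invariant probability measure'' is a legitimate statement, and verify that $\mtx{A}\mapsto\range(\mtx{A})$ is Borel-measurable. Once this scaffolding is granted, the content is just the orthogonal invariance of the multivariate Gaussian together with the invariance of Haar measure. If one prefers to avoid invoking the uniqueness theorem, an equivalent route is a direct coupling: draw $\mtx{Q}$ Haar-distributed and independent of $\mtx{A}$; then $\range(\mtx{A}) \stackrel{law}{=} \range(\mtx{Q}\mtx{A}) = \mtx{Q}\range(\mtx{A})$, and conditioning on the value of $\range(\mtx{A})$ and using the right-invariance of Haar measure on $O(D)$ identifies the conditional law of $\mtx{Q}\range(\mtx{A})$ with that of $\mtx{Q}\mathcal{L}_d$, giving the claim after averaging.
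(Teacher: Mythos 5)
Your proof is correct. Note that the paper does not actually prove this statement: it simply refers the reader to the proof of Theorem~1.2 in the cited work of Goldstein et al., so there is no in-paper argument to compare against. The route you take --- almost-sure full rank of $\mtx{A}$, orthogonal invariance of the Gaussian law giving an $O(D)$-invariant pushforward on the Grassmannian, and uniqueness of the invariant probability measure on a compact homogeneous space (or, equivalently, your direct Haar-coupling argument) --- is the standard proof of this fact and would serve as a valid self-contained replacement for the citation.
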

\begin{proof}
	See proof of \cite[Theorem 1.2]{Goldstein2017}.
\end{proof}

The transformation of  \eqref{eq:RPX_eps_succ>p+range(A)_cap_C_noteq_p} into a form suitable for the application of Crofton formula is given in the following corollary.
\begin{corollary}\label{cor:RPX_eps_succ>QL_cap_Circ}
	Let \Cref{ass:f_is_Lipschitz} hold. Let $\mtx{A}$ be a $D \times d$ Gaussian matrix, $\mtx{Q}$ be a $D \times D$ random orthogonal matrix drawn uniformly from the set of all $D \times D$ real orthogonal matrices and $\mathcal{L}_d$ be a $d$-dimensional linear subspace in $\mathbb{R}^D$. Let  $\epsilon > 0$ an accuracy tolerance and let $\mvec{p} \in \mathcal{X} \setminus G_\epsilon$ be a given vector. Let $\ccone_D(\alpha^*_{\mvec{p}})$ be the circular cone with $\alpha^*_{\mvec{p}} = \arcsin(\epsilon/(L\| \mvec{x}^* - \mvec{p}\|))$, where $\mvec{x}^*$ is any global minimizer of \eqref{eq: GO} satisfying \Cref{ass:B_eps_is_a_ball}. Then,
	\begin{equation}\label{eq:RPX_eps_succ>QL_cap_Circ}
		\prob[\eqref{eq: AREGO} \ \text{is $\epsilon$-successful}] \geq \prob[\mtx{Q}\mathcal{L}_d \cap \ccone_D(\alpha^*_{\mvec{p}}) \neq \{ \mvec{0} \}].
	\end{equation}
\end{corollary}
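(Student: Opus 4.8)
The plan is to chain together the three ingredients already in hand: the lower bound of \Cref{thm:RPX_eps_succ>p+range_cap_C(p)}, the identification of $C_{\mvec{p}}(\mvec{x}^*)$ with a rotated, translated circular cone, and the distributional equivalence $\range(\mtx{A}) \stackrel{law}{=} \mtx{Q}\mathcal{L}_d$ of \Cref{thm:A = QL}. Indeed, \Cref{thm:RPX_eps_succ>p+range_cap_C(p)} already gives
\[
	\prob[\eqref{eq: AREGO}\ \text{is $\epsilon$-successful}] \geq \prob[\mvec{p}+\range(\mtx{A}) \cap C_{\mvec{p}}(\mvec{x}^*) \neq \{\mvec{p}\}],
\]
so it suffices to prove that the right-hand side equals $\prob[\mtx{Q}\mathcal{L}_d \cap \ccone_D(\alpha^*_{\mvec{p}}) \neq \{\mvec{0}\}]$. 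First I would record that the hypothesis $\mvec{p} \in \mathcal{X}\setminus G_\epsilon$, together with $B_{\epsilon/L}(\mvec{x}^*) \subseteq G_\epsilon$ (established inside the proof of \Cref{prop:lower_bound_ball} via \Cref{ass:f_is_Lipschitz} and \Cref{ass:B_eps_is_a_ball}), forces $\mvec{p} \notin B_{\epsilon/L}(\mvec{x}^*)$, hence $\|\mvec{x}^*-\mvec{p}\| > \epsilon/L$ and the angle $\alpha^*_{\mvec{p}} = \arcsin(\epsilon/(L\|\mvec{x}^*-\mvec{p}\|)) \in (0,\pi/2)$ is well defined; this is precisely the regime in which $C_{\mvec{p}}(\mvec{x}^*)$ from \eqref{def:C(p)} is a genuine circular cone translated by $\mvec{p}$.

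Next I would translate everything back to the origin. Since $\mvec{t}\mapsto\mvec{t}-\mvec{p}$ is a bijection of $\mathbb{R}^D$ sending $\mvec{p}$ to $\mvec{0}$, the event $\{\mvec{p}+\range(\mtx{A}) \cap C_{\mvec{p}}(\mvec{x}^*) \neq \{\mvec{p}\}\}$ coincides with $\{\range(\mtx{A}) \cap (C_{\mvec{p}}(\mvec{x}^*)-\mvec{p}) \neq \{\mvec{0}\}\}$. The set $C_{\mvec{p}}(\mvec{x}^*)-\mvec{p}$ is the union of all rays emanating from the origin through points of $B_{\epsilon/L}(\mvec{x}^*-\mvec{p})$; because the centre $\mvec{x}^*-\mvec{p}$ has norm strictly exceeding the radius $\epsilon/L$, a short computation (distance from the origin-ray in a unit direction $\mvec{u}$ to the centre) shows this set is exactly the circular cone of half-angle $\alpha^*_{\mvec{p}}$ about the axis $(\mvec{x}^*-\mvec{p})/\|\mvec{x}^*-\mvec{p}\|$, i.e. $\mtx{U}\,\ccone_D(\alpha^*_{\mvec{p}})$ for any orthogonal $\mtx{U}$ mapping the first coordinate vector to that axis (see \eqref{eq: ccone} and the description anticipated in the paragraph preceding \Cref{thm:A = QL}). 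Hence the right-hand side above equals $\prob[\range(\mtx{A}) \cap \mtx{U}\,\ccone_D(\alpha^*_{\mvec{p}}) \neq \{\mvec{0}\}]$.

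Finally I would strip off both $\mtx{A}$ and $\mtx{U}$ by invariance. By \Cref{thm:A = QL}, $\range(\mtx{A}) \stackrel{law}{=} \mtx{Q}\mathcal{L}_d$ as random subspaces of $\mathbb{R}^D$, and since ``$\cap\,S \neq \{\mvec{0}\}$'' is a measurable event for the fixed closed convex cone $S = \mtx{U}\,\ccone_D(\alpha^*_{\mvec{p}})$, this yields $\prob[\range(\mtx{A})\cap S \neq \{\mvec{0}\}] = \prob[\mtx{Q}\mathcal{L}_d\cap S \neq \{\mvec{0}\}]$. Applying $\mtx{U}^T$ to both cones inside the intersection (an orthogonal map preserves intersections and fixes the origin) turns the event into $\{\mtx{U}^T\mtx{Q}\mathcal{L}_d \cap \ccone_D(\alpha^*_{\mvec{p}}) \neq \{\mvec{0}\}\}$, and by left-invariance of the Haar measure on the orthogonal group $\mtx{U}^T\mtx{Q} \stackrel{law}{=} \mtx{Q}$, so $\mtx{U}^T\mtx{Q}\mathcal{L}_d \stackrel{law}{=} \mtx{Q}\mathcal{L}_d$; combining the displayed equalities gives \eqref{eq:RPX_eps_succ>QL_cap_Circ}. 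The only step with genuine content is the geometric identification of $C_{\mvec{p}}(\mvec{x}^*)-\mvec{p}$ as the circular cone $\mtx{U}\,\ccone_D(\alpha^*_{\mvec{p}})$ — pinning down the half-angle $\alpha^*_{\mvec{p}}=\arcsin(\epsilon/(L\|\mvec{x}^*-\mvec{p}\|))$ and checking that, since $\|\mvec{x}^*-\mvec{p}\| > \epsilon/L$, the admissible directions form a one-sided ray-cone rather than a full double cone; everything afterwards is routine manipulation of the two rotational invariances, so I anticipate no real difficulty there.
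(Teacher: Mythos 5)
Your proposal is correct and follows essentially the same route as the paper: start from \Cref{thm:RPX_eps_succ>p+range_cap_C(p)}, identify $C_{\mvec{p}}(\mvec{x}^*)$ as a translated, rotated circular cone of half-angle $\alpha^*_{\mvec{p}}$, translate back to the origin, and remove the rotation by invariance before (or, in the paper's ordering, after) invoking \Cref{thm:A = QL}. The only cosmetic difference is that the paper absorbs the rotation using the orthogonal invariance of the Gaussian matrix $\mtx{A}$, whereas you absorb it using left-invariance of the Haar-distributed $\mtx{Q}$; these are interchangeable, and your extra care in verifying $\mvec{p}\notin B_{\epsilon/L}(\mvec{x}^*)$ and in deriving the half-angle is a welcome elaboration of what the paper asserts "by definition."
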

\begin{proof}
	As mentioned earlier, by definition, $C_{\mvec{p}}(\mvec{x}^*)$ is the rotated and translated (by $\mvec{p}$) circular cone $\ccone_D(\alpha^*_{\mvec{p}})$. That is, there exists a $D \times D$ orthogonal matrix $\mtx{S}$ such that $C_{\mvec{p}}(\mvec{x}^*) = \mvec{p}+\mtx{S}\ccone_D(\alpha^*_{\mvec{p}})$. Then,  \Cref{thm:RPX_eps_succ>p+range_cap_C(p)} implies 
	\begin{equation}
		\begin{aligned}
			\prob[\eqref{eq: AREGO} \ \text{is $\epsilon$-successful}] & \geq \prob[\mvec{p}+\range(\mtx{A}) \cap \mvec{p} + \mtx{S}\ccone_D(\alpha^*_{\mvec{p}}) \neq \{ \mvec{p} \}] \\
			& = \prob[\range(\mtx{A}) \cap \mtx{S}\ccone_D(\alpha^*_{\mvec{p}}) \neq \{ \mvec{0} \}] \\
			& = \prob[\mtx{S}^T\range(\mtx{A}) \cap \ccone_D(\alpha^*_{\mvec{p}}) \neq \{ \mvec{0} \}] \\
			& = \prob[\range(\mtx{A}) \cap \ccone_D(\alpha^*_{\mvec{p}}) \neq \{ \mvec{0} \}] \\
			& = \prob[\mtx{Q}\mathcal{L}_d \cap \ccone_D(\alpha^*_{\mvec{p}}) \neq \{ \mvec{0} \}],
		\end{aligned}
	\end{equation}
	where the penultimate equality follows from the orthogonal invariance of Gaussian matrices and where the last equality follows from \Cref{thm:A = QL}.
\end{proof}
\Cref{cor:RPX_eps_succ>QL_cap_Circ} now allows us to use the Crofton formula to quantify the lower bound in \eqref{eq:RPX_eps_succ>QL_cap_Circ}. In the next theorem, we derive our first lower bound, that is dependent on the location of $\mvec{p}$ in $\mathcal{X}$. In particular, note that $\mvec{p}$ is assumed to be at a distance at least $\epsilon/L$ from $\mvec{x}^*$.

\begin{theorem}[A lower bound on the success probability]\label{thm:RPX_eps_succ>t(r_p)}
	Let \Cref{ass:f_is_Lipschitz} hold, let $\mtx{A}$ be a $D \times d$ Gaussian matrix and $\epsilon >0$, an accuracy tolerance. Let $\mvec{p} \in \mathcal{X}\setminus G_{\epsilon}$ be a given vector and let $r_{\mvec{p}} := \epsilon/(L\| \mvec{x}^* - \mvec{p}\|)$, where $\mvec{x}^*$ is any global minimizer of \eqref{eq: GO} that satisfies \Cref{ass:B_eps_is_a_ball}. Then, 
	\begin{equation}\label{eq:RPX_eps_succ>tau_p}
			\prob[\eqref{eq: AREGO} \ \text{is $\epsilon$-successful}] \geq \tau = \tau(r_{\mvec{p}}, d, D),
	\end{equation}
	where the function $\tau(r, d, D)$ for $0<r<1$ and $1 \leq d < D$ is defined as
	\begin{equation}\label{def:tau_p}                                   \tau(r, d, D) :=
		\begin{cases}                                      
			\displaystyle
			(D-1) \cdot                          \binom{\frac{D-2}{2}}{\frac{D-1}{2}}\int_{0}^{\arcsin(r)} \sin^{D-2}(x) dx	                & \text{if $d = 1$,} \\
			\displaystyle \binom{\frac{D-2}{2}}{\frac{D-d}{2}} r^{D-d} (1-r^2)^{\frac{d-2}{2}}  & \text{if $1<d< D$.} 
		\end{cases}
	\end{equation}
Here, $\binom{i}{j}$ denotes the general binomial coefficient defined in \eqref{eq:general_binom_coeff}.
\end{theorem}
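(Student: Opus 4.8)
The plan is to chain the reduction already established in \Cref{cor:RPX_eps_succ>QL_cap_Circ} with the Crofton formula \eqref{eq:Crofton_formula} of \Cref{cor: Crofton_formula}, and then to evaluate (respectively, lower-bound) the resulting partial sum of conic intrinsic volumes using the closed-form expressions for the circular cone in \eqref{eq: d_intr_vol_ccone} and \eqref{eq: D_intr_vol_ccone}.

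First I would check that everything in sight is well defined. Since $\mvec{p} \in \mathcal{X}\setminus G_\epsilon$ while $B_{\epsilon/L}(\mvec{x}^*) \subseteq G_\epsilon$ (this containment is established inside the proof of \Cref{prop:lower_bound_ball} via Lipschitz continuity), we have $\|\mvec{x}^*-\mvec{p}\| > \epsilon/L$, hence $r_{\mvec{p}} \in (0,1)$ and $\alpha^*_{\mvec{p}} := \arcsin(r_{\mvec{p}}) \in (0,\pi/2)$. In particular $\ccone_D(\alpha^*_{\mvec{p}})$ is a closed convex cone that is not a linear subspace, so \Cref{cor: Crofton_formula} applies with $C = \ccone_D(\alpha^*_{\mvec{p}})$. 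Combining \eqref{eq:RPX_eps_succ>QL_cap_Circ} with \eqref{eq:Crofton_formula} gives
\[
\prob[\eqref{eq: AREGO}\ \text{is $\epsilon$-successful}] \;\geq\; \prob[\mtx{Q}\mathcal{L}_d \cap \ccone_D(\alpha^*_{\mvec{p}}) \neq \{\mvec{0}\}] \;=\; 2h_{D-d+1},
\]
where $h_{D-d+1}$ is the partial sum \eqref{eq: def_h_k} of the intrinsic volumes of $\ccone_D(\alpha^*_{\mvec{p}})$.

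Next I would evaluate $2h_{D-d+1}$ in the two regimes of \eqref{def:tau_p}. For $d=1$ the relevant index is $D-d+1=D$, so (with $d=1$ odd) the sum in \eqref{eq: def_h_k} collapses to the single term $h_D = v_D(\ccone_D(\alpha^*_{\mvec{p}}))$; substituting \eqref{eq: D_intr_vol_ccone} with $\alpha^*_{\mvec{p}}=\arcsin(r_{\mvec{p}})$ reproduces the first branch $\tau(r_{\mvec{p}},1,D)$ exactly. For $1<d<D$ we have $2 \le D-d+1 \le D-1$, and since $v_{D-d+1}(\ccone_D(\alpha^*_{\mvec{p}}))$ is the first summand of $h_{D-d+1}$ for either parity of $d$, nonnegativity of the intrinsic volumes (property \eqref{eq: sum_v(C)=1}) gives $h_{D-d+1} \ge v_{D-d+1}(\ccone_D(\alpha^*_{\mvec{p}}))$. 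Plugging $k=D-d+1$ into \eqref{eq: d_intr_vol_ccone} and using $\sin\alpha^*_{\mvec{p}}=r_{\mvec{p}}$, $\cos^2\alpha^*_{\mvec{p}}=1-r_{\mvec{p}}^2$ yields
\[
2v_{D-d+1}(\ccone_D(\alpha^*_{\mvec{p}})) = \binom{\frac{D-2}{2}}{\frac{D-d}{2}} r_{\mvec{p}}^{D-d}(1-r_{\mvec{p}}^2)^{\frac{d-2}{2}} = \tau(r_{\mvec{p}},d,D),
\]
which is the second branch; chaining the inequalities then gives \eqref{eq:RPX_eps_succ>tau_p}.

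There is no serious obstacle here: the substance is all in the preceding results, and what remains is careful bookkeeping --- ensuring $r_{\mvec{p}}<1$ so that the circular cone and $\arcsin(r_{\mvec{p}})$ are legitimate, verifying that the index $D-d+1$ lies in the range $\{1,\dots,D-1\}$ on which \eqref{eq: d_intr_vol_ccone} holds (it does precisely because $1<d<D$), and observing that $v_{D-d+1}$ is the leading summand of $h_{D-d+1}$ regardless of the parity of $d$. Discarding the remaining intrinsic volumes $v_{D-d+3},v_{D-d+5},\dots$ is exactly where the bound stops being tight for $d>1$: it retains only the dominant intrinsic volume, which is what later motivates the asymptotic ($D\to\infty$) sharpening of this estimate.
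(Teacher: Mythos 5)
Your proof is correct and follows essentially the same route as the paper's: chain \Cref{cor:RPX_eps_succ>QL_cap_Circ} with the Crofton formula, drop all but the leading intrinsic volume $v_{D-d+1}$ by nonnegativity, and identify $2v_{D-d+1}(\ccone_D(\arcsin(r_{\mvec{p}})))$ with $\tau(r_{\mvec{p}},d,D)$ via \eqref{eq: d_intr_vol_ccone} and \eqref{eq: D_intr_vol_ccone}. Your added bookkeeping (verifying $r_{\mvec{p}}<1$, the index range, and the exactness of the $d=1$ case where $h_D$ is a single term) is sound and slightly more careful than the paper's own write-up.
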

\begin{proof}
	Let $\alpha^*_{\mvec{p}} = \arcsin(r_{\mvec{p}})$ and let $C$ denote $\ccone_D(\alpha^*_{\mvec{p}})$ for notational convenience. First, note that by \eqref{eq: d_intr_vol_ccone} and \eqref{eq: D_intr_vol_ccone},  $\tau(r, d, D) = 2v_{D-d+1}(\ccone_D(\arcsin(r)))$. Thus, all we need to show is that $\prob[\eqref{eq: AREGO} \ \text{is $\epsilon$-successful}]$ is lower bounded by $2v_{D-d+1}(C)$.
	
	By \eqref{eq:RPX_eps_succ>QL_cap_Circ} and the Crofton formula \eqref{eq:Crofton_formula}, we have
	\begin{equation}\label{eq:RPX_eps_succ>2v}
		\begin{aligned}
			\prob[\eqref{eq: AREGO} \ \text{is $\epsilon$-successful}] & \geq \begin{cases}
				2(v_{D-d+1}(C) + v_{D-d+3}(C) + \cdots + v_D(C)) & \mbox{if $d$ is odd,}  \\
				2(v_{D-d+1}(C) + v_{D-d+3}(C) + \cdots + v_{D-1}(C)) & \mbox{if $d$ is even} 
			\end{cases} \\
		& \geq 2v_{D-d+1}(C),
		\end{aligned}
	\end{equation}
	where the inequality follows from the fact that $v_k(C)$'s are all nonnegative (see \eqref{eq: sum_v(C)=1}). 
\end{proof}

Let us explain why we choose to bound the $\epsilon$-success of \eqref{eq: AREGO} in \eqref{eq:RPX_eps_succ>2v} by a multiple of $v_{D-d+1}(C)$ in particular, whereas we could have chosen any other intrinsic volume or the entire sum of these volumes. Our reason for such a choice for the lower bound is underpinned by the following observation: using the formulae \eqref{eq: d_intr_vol_ccone} and \eqref{eq: D_intr_vol_ccone} for the intrinsic volumes, one can verify that $v_{D-d+i}(C)/v_{D-d+1}(C) = \mathcal{O}(D^{(1-i)/2})$ for $i = 1,2,\dots,d$ as $D \rightarrow \infty $ with other parameters kept fixed\footnote{The term $v_{D-d+1}(C)$ is dominant also in the scenario when $\| \mvec{x}^* - \mvec{p} \| \rightarrow \infty$ as $D\rightarrow \infty$ with other parameters fixed. In this case, $v_{D-d+i}(C)/v_{D-d+1}(C) = \mathcal{O}((r_{\mvec{p}}/\sqrt{D})^{i-1})$ for $i = 1,2,\dots,d$ as $D \rightarrow \infty $.}. Hence,
$$ v_{D-d+1}(C) + v_{D-d+3}(C) + \cdots = v_{D-d+1}(C)\cdot(1+\mathcal{O}(1/D)). $$
Therefore, approximating the sum by its leading term $v_{D-d+1}(C)$ is reasonable for large values of $D$. 

Given a global minimizer $\mvec{x}^*$ of \eqref{eq: GO} that satisfies \Cref{ass:B_eps_is_a_ball} and a positive constant $R_{\max}$, the following result provides a lower bound on the probability of $\epsilon$-success of \eqref{eq: AREGO} that holds for all $\mvec{p} \in \mathcal{X}$ satisfying $\rVert \mvec{x}^* - \mvec{p}\rVert \leq R_{\max}<\infty$. Note that, in contrast with the last theorem, this result holds for $\mvec{p}$ arbitrarily close to $\mvec{x}^*$; as such, it will be crucial to the convergence of our algorithmic proposals in \Cref{sec:convergence}. Note that  there are natural ways to choose $R_{\max}$ in some cases:
\begin{itemize}
    \item If a sequence of reduced problems \eqref{eq: AREGO} is being considered such that the random subspaces are drawn at the same $\mvec{p} \in \mathcal{X}$, on can simply take $R_{\max}=\|\mvec{x^*}-\mvec{p}\|$.
    \item If the sequence of reduced problems \eqref{eq: AREGO} corresponds to a bounded parameter sequence $\{\mvec{p}^0, \mvec{p}^1, \dots\}$, one can choose $R_{\max}$ to be the (finite) supremum over the sequence $\{\|\mvec{x}^*-\mvec{p}^i\|\}$ for $i\geq 0$.
    \item If $\mathcal{X}$ is bounded, since $\mvec{p}\in\mathcal{X}$ and $\mvec{x}^*\in \mathcal{X}$, one can simply choose $R_{\max}$ to be the diameter of $\mathcal{X}$. 
\end{itemize}

 Note that when $\mathcal{X}$ is not bounded, it is in general difficult to derive a uniform lower bound on the probability of $\epsilon$-success of \eqref{eq: AREGO} that is valid for all $\mvec{p} \in \mathcal{X}$ (taking $\mvec{p} \to \mvec{\infty}$ will make the lower bound go to zero). The above list provides two examples of rules for selecting $\mvec{p}$ that guarantee that the result below holds even in the case $\mathcal{X}$ bounded. Other examples are given in \Cref{sec: XREGO}.

\begin{theorem}[A uniform lower bound on the success probability]\label{thm:RPX_eps_succ_gen_uniform_bound}
	Suppose that \Cref{ass:f_is_Lipschitz} holds. Let $\mtx{A}$ be a $D \times d$ Gaussian matrix, $\epsilon$ a positive accuracy tolerance,  $\mvec{x}^*$ a global minimizer of \eqref{eq: GO} that satisfies \Cref{ass:B_eps_is_a_ball}. For all $\mvec{p} \in \mathcal{X}$ satisfying $\rVert \mvec{p} - \mvec{x}^* \rVert < R_{\max}$ for some suitably chosen constant $R_{\max}$, we have 
	\begin{equation}\label{eq:RPX_eps_succ>tau}
		\prob[\eqref{eq: AREGO} \ \text{is $\epsilon$-successful}] \geq \tau =  \tau(r_{min}, d, D),
	\end{equation}
	where $\tau(\cdot, \cdot, \cdot)$ is defined in \eqref{def:tau_p} and $r_{min} := \epsilon/(L R_{\max})$.
\end{theorem}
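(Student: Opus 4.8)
The plan is to derive the uniform bound from the pointwise estimate of \Cref{thm:RPX_eps_succ>t(r_p)} together with an analysis of how the lower bound $\tau(\cdot,d,D)$ varies with the distance $\|\mvec{p}-\mvec{x}^*\|$. We may assume $R_{\max}>\epsilon/L$, so that $r_{min}=\epsilon/(LR_{\max})\in(0,1)$ and $\tau(r_{min},d,D)$ is given by \eqref{def:tau_p}; otherwise every admissible $\mvec{p}$ already lies in $B_{\epsilon/L}(\mvec{x}^*)$ and \eqref{eq: AREGO} is $\epsilon$-successful with probability $1$. First I would dispose of the case $\mvec{p}\in G_\epsilon$ --- which in particular covers $\|\mvec{p}-\mvec{x}^*\|\le\epsilon/L$, since then $\mvec{p}\in B_{\epsilon/L}(\mvec{x}^*)\subseteq\mathcal{X}$ is itself an $\epsilon$-minimizer by \Cref{ass:f_is_Lipschitz}: there \eqref{eq: AREGO} is $\epsilon$-successful with probability $1$ (take $\mvec{y}=\mvec{0}$), and the bound holds because $\tau(r_{min},d,D)=2v_{D-d+1}(\ccone_D(\arcsin r_{min}))\le 1$ by the Gauss--Bonnet bound $v_k\le\tfrac12$.

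So suppose $\mvec{p}\in\mathcal{X}\setminus G_\epsilon$; then $\|\mvec{p}-\mvec{x}^*\|>\epsilon/L$ and $r_{\mvec{p}}:=\epsilon/(L\|\mvec{p}-\mvec{x}^*\|)\in(r_{min},1)$, so \Cref{thm:RPX_eps_succ>t(r_p)} already bounds the success probability below by $\tau(r_{\mvec{p}},d,D)$ and it remains to prove $\tau(r_{\mvec{p}},d,D)\ge\tau(r_{min},d,D)$. The key ingredient is the monotonicity behaviour of $r\mapsto\tau(r,d,D)$ on $(0,1)$, read from \eqref{def:tau_p}. For $d=1$ this map is nondecreasing (the integrand $\sin^{D-2}(x)$ is nonnegative and the upper limit $\arcsin(r)$ increases with $r$), and for $d=2$ it equals $r^{D-2}$, again nondecreasing; in both cases $r_{\mvec{p}}>r_{min}$ gives the claim at once.

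The delicate case is $d\ge 3$. A one-line computation gives $\tfrac{d}{dr}\log\tau(r,d,D)=\tfrac{D-d}{r}-\tfrac{(d-2)r}{1-r^2}$, which shows that $\tau(\cdot,d,D)$ is strictly increasing on $(0,r^*]$ and strictly decreasing on $[r^*,1)$, with $r^*:=\sqrt{(D-d)/(D-2)}\in(0,1)$ and $\tau(r,d,D)\to 0$ as $r\to 0^+$ or $r\to 1^-$; in particular $\tau(r^*,d,D)=\max_{r\in(0,1)}\tau(r,d,D)$. I would then split on the size of $r_{\mvec{p}}$. If $r_{\mvec{p}}\le r^*$ (equivalently $\|\mvec{p}-\mvec{x}^*\|\ge\epsilon/(Lr^*)$; together with $r_{\mvec{p}}>r_{min}$ this puts $[r_{min},r_{\mvec{p}}]\subseteq(0,r^*]$), monotonicity on that interval gives $\tau(r_{\mvec{p}},d,D)\ge\tau(r_{min},d,D)$. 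If instead $r_{\mvec{p}}>r^*$, i.e.\ $\|\mvec{p}-\mvec{x}^*\|<\epsilon/(Lr^*)$, the raw bound is too weak, so I would re-run the argument of \Cref{prop:lower_bound_ball}, \Cref{thm:RPX_eps_succ>p+range_cap_C(p)} and \Cref{cor:RPX_eps_succ>QL_cap_Circ} with the \emph{smaller} target ball $B_\rho(\mvec{x}^*)$, $\rho:=r^*\|\mvec{p}-\mvec{x}^*\|<\epsilon/L$: this ball still lies in $G_\epsilon\cap\mathcal{X}$, one has $\mvec{p}\notin B_\rho(\mvec{x}^*)$ since $r^*<1$, and the cone generated from $\mvec{p}$ over $B_\rho(\mvec{x}^*)$ is a rotated-and-translated copy of $\ccone_D(\arcsin r^*)$ since $\rho/\|\mvec{p}-\mvec{x}^*\|=r^*$; the Crofton step then lower-bounds the success probability by $2v_{D-d+1}(\ccone_D(\arcsin r^*))=\tau(r^*,d,D)\ge\tau(r_{min},d,D)$ by maximality. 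Collecting the cases yields \eqref{eq:RPX_eps_succ>tau}.

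I expect the main obstacle to be exactly this non-monotonicity of $\tau(\cdot,d,D)$ for $d\ge 3$: the natural guess that bringing $\mvec{p}$ closer to $\mvec{x}^*$ can only help is false for the explicit lower bound $\tau$, which in fact decays to $0$ as $\|\mvec{p}-\mvec{x}^*\|\downarrow\epsilon/L$, so the sub-ball refinement (replacing $B_{\epsilon/L}(\mvec{x}^*)$ by $B_{r^*\|\mvec{p}-\mvec{x}^*\|}(\mvec{x}^*)$) is what recovers a positive, $\mvec{p}$-independent bound for $\mvec{p}$ in the thin shell $\epsilon/L\le\|\mvec{p}-\mvec{x}^*\|<\epsilon/(Lr^*)$. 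Everything else --- the derivative computation, the estimate $\tau\le 1$, and checking that the relevant quantities stay in the admissible ranges of \Cref{thm:RPX_eps_succ>t(r_p)} and \eqref{def:tau_p} --- is routine.
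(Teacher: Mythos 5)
Your proof is correct, but it takes a genuinely different (and longer) route than the paper's. The paper never analyses the monotonicity of the explicit formula $\tau(\cdot,d,D)$ at all: in the case $\mvec{p}\in\mathcal{X}\setminus G_\epsilon$ it simply notes $\alpha^*_{\mvec{p}}\geq\alpha^*_{min}:=\arcsin(r_{min})$, invokes \Cref{lemma:circ(alpha)<circ(beta)_if_alpha<beta} to get the cone inclusion $\ccone_D(\alpha^*_{min})\subseteq\ccone_D(\alpha^*_{\mvec{p}})$, and then applies Crofton to the \emph{smaller} cone, yielding $\prob[\mtx{Q}\mathcal{L}_d\cap\ccone_D(\alpha^*_{\mvec{p}})\neq\{\mvec{0}\}]\geq\prob[\mtx{Q}\mathcal{L}_d\cap C_{min}\neq\{\mvec{0}\}]\geq 2v_{D-d+1}(C_{min})=\tau(r_{min},d,D)$ in one step; the case $\mvec{p}\in G_\epsilon$ is handled trivially as you do. In other words, the monotonicity is exploited at the level of events (where it is an immediate set inclusion) rather than at the level of the single intrinsic volume $2v_{D-d+1}$ (where, as you correctly observe, it fails for $d\geq 3$). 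Your sub-ball refinement for $r_{\mvec{p}}>r^*$ is really the same cone-shrinking idea, applied only down to angle $\arcsin(r^*)$ instead of all the way to $\alpha^*_{min}$, which is why you then need the extra maximality step $\tau(r^*,d,D)\geq\tau(r_{min},d,D)$; had you shrunk the ball to radius $r_{min}\|\mvec{p}-\mvec{x}^*\|$ in all cases you would have recovered the paper's proof without any case split on $d$. What your version buys is a genuinely useful observation that the paper does not surface: the pointwise bound $\tau(r_{\mvec{p}},d,D)$ of \Cref{thm:RPX_eps_succ>t(r_p)} is \emph{not} itself uniformly bounded below by $\tau(r_{min},d,D)$ when $d\geq 3$ (it decays to zero as $\|\mvec{p}-\mvec{x}^*\|\downarrow\epsilon/L$), so the uniform result cannot be obtained by merely taking an infimum of the pointwise bounds --- one must return to the geometry. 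One cosmetic caveat: when $R_{\max}\leq\epsilon/L$ the quantity $\tau(r_{min},d,D)$ falls outside the range $0<r<1$ of \eqref{def:tau_p}, so your opening reduction should simply exclude that degenerate case rather than assert the bound there; the paper implicitly assumes $r_{min}<1$ as well.
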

\begin{proof}
	Let $\mvec{x}^*$ be a global minimizer that satisfies \Cref{ass:B_eps_is_a_ball}, let $r_{\mvec{p}}$ be defined in \Cref{thm:RPX_eps_succ>t(r_p)} and let $\alpha^*_{\mvec{p}} = \arcsin(r_{\mvec{p}})$. We consider the two cases $\mvec{p} \in \mathcal{X}\setminus G_{\epsilon}$ and $\mvec{p}\in G_{\epsilon}$ separately. 

	First, let $\mvec{p}$ be any point in $ \mathcal{X}\setminus G_{\epsilon}$. Then,
	\begin{equation} \label{eq:r_p>r_min}
		\begin{aligned}
			r_{\mvec{p}} & = \frac{\epsilon}{L \rVert \mvec{p} - \mvec{x}^* \rVert} \geq r_{min}, \\
			\alpha^*_{\mvec{p}} & \geq \arcsin(r_{min}) := \alpha^*_{min}.
		\end{aligned}
	\end{equation}
	Now, define $C_{min}:= \ccone_D(\alpha^*_{min})$. By \eqref{eq:r_p>r_min} and \Cref{lemma:circ(alpha)<circ(beta)_if_alpha<beta}, it follows that $C_{min} \subseteq \ccone_D(\alpha^*_{\mvec{p}})$. Using \Cref{cor:RPX_eps_succ>QL_cap_Circ}, we then obtain 
	\begin{equation}\label{eq:RPX_eps_succ>2v(C_min)}
		\begin{aligned}
					\prob[\eqref{eq: AREGO} \ \text{is $\epsilon$-successful}] & \geq \prob[\mtx{Q}\mathcal{L}_d \cap \ccone_D(\alpha^*_{\mvec{p}}) \neq \{ \mvec{0} \}] \\
					& \geq \prob[\mtx{Q}\mathcal{L}_d \cap C_{min} \neq \{ \mvec{0} \}] \\
					& \geq 2v_{D-d+1}(C_{min}), \\
		\end{aligned}
	\end{equation}
	where the last inequality follows from the same line of argument as in \eqref{eq:RPX_eps_succ>2v}. Using \eqref{eq: d_intr_vol_ccone} and \eqref{eq: D_intr_vol_ccone}, it is easy to verify that $2v_{D-d+1}(C_{min}) = \tau(r_{min}, d, D)$. We have shown \eqref{eq:RPX_eps_succ>tau} for $\mvec{p} \in \mathcal{X}\setminus G_{\epsilon}$.
	
	For $\mvec{p} \in G_{\epsilon}$, \eqref{eq:RPX_eps_succ>tau} holds trivially, since if $\mvec{p} \in G_{\epsilon}$, \eqref{eq: AREGO} is $\epsilon$-successful with probability 1. As a sanity check, $1 \geq 2v(C_{min}) = \tau(r_{min}, d, D)$ where the inequality is implied by the Gauss-Bonnet formula \eqref{eq:Gauss-Bonnet}. 
\end{proof}

Unfortunately, the formula defining $\tau(r, d, D)$ is not easy to interpret. To better understand the dependence of the lower bounds \eqref{eq:RPX_eps_succ>tau_p} and \eqref{eq:RPX_eps_succ>tau} on the parameters of the problem, we now analyse the behaviour of $\tau(r, d, D)$ in the asymptotic regime.

\subsection{Asymptotic expansions} \label{sec:tau_asymp_exp_gen_obj}
We establish the asymptotic behaviour of $\tau(r, d, D)$ for large $D$. The other parameters are kept fixed except for $r$ which we allow to decrease with $D$. Note indeed that $r_{\mvec{p}}$ in \Cref{thm:RPX_eps_succ>t(r_p)} is inversely proportional to $\| \mvec{x}^* - \mvec{p} \|$, which typically increases with $D$. Before we begin, we first need to establish the following lemma.
\begin{lemma}\label{lemma:int_sin^n(x)_asymptotic_exp}
	Let $0 < \alpha < \pi/2$ be either a fixed angle or a function of $D$ that tends to 0 as $D \rightarrow \infty$. Then, as $D \rightarrow \infty$, 
	\begin{equation}
		\int_{0}^{\alpha} \sin^D(x) dx = \frac{1}{D} \frac{\sin^{D+1}(\alpha)}{\cos(\alpha)} + O\left(\frac{\sin^{D+1}(\alpha)}{D^2}\right).
	\end{equation}
\end{lemma}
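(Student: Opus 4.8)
The plan is to reduce the integral to a one–dimensional Laplace–type integral and then extract its leading term by a single integration by parts. First I would substitute $s=\sin x$ (legitimate since $\cos x>0$ on $[0,\alpha]\subset[0,\pi/2)$), which turns the integral into
\[
\int_0^\alpha \sin^D(x)\,dx=\int_0^{\sin\alpha}\frac{s^{D}}{\sqrt{1-s^{2}}}\,ds .
\]
Because $\sin\alpha<1$, the integrand is continuous on the closed interval $[0,\sin\alpha]$, and for large $D$ the mass of $s^{D}$ concentrates near the upper endpoint $s=\sin\alpha$; this is the heuristic reason the answer is a constant times $\sin^{D+1}\alpha/D$.

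Next I would integrate by parts with $dv=s^{D}\,ds$, $v=s^{D+1}/(D+1)$ and $u=(1-s^{2})^{-1/2}$, $du=s(1-s^{2})^{-3/2}\,ds$, obtaining
\[
\int_0^{\sin\alpha}\frac{s^{D}}{\sqrt{1-s^{2}}}\,ds=\frac{\sin^{D+1}\alpha}{(D+1)\sqrt{1-\sin^{2}\alpha}}-\frac{1}{D+1}\int_0^{\sin\alpha}\frac{s^{D+2}}{(1-s^{2})^{3/2}}\,ds .
\]
Since $\sqrt{1-\sin^{2}\alpha}=\cos\alpha$, the first term is exactly $\sin^{D+1}\alpha/((D+1)\cos\alpha)$. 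For the remainder I would use the crude bound $1-s^{2}\ge 1-\sin^{2}\alpha=\cos^{2}\alpha$ on $[0,\sin\alpha]$, so that $(1-s^{2})^{3/2}\ge\cos^{3}\alpha$ and hence
\[
0\le\int_0^{\sin\alpha}\frac{s^{D+2}}{(1-s^{2})^{3/2}}\,ds\le\frac{1}{\cos^{3}\alpha}\int_0^{\sin\alpha}s^{D+2}\,ds=\frac{\sin^{D+3}\alpha}{(D+3)\cos^{3}\alpha},
\]
which shows the remainder is at most $\dfrac{\sin^{2}\alpha}{\cos^{3}\alpha}\cdot\dfrac{\sin^{D+1}\alpha}{(D+1)(D+3)}$.

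To finish, I would replace $1/(D+1)$ by $1/D-1/(D(D+1))$ in the main term and collect everything, so that
\[
\int_0^\alpha\sin^{D}(x)\,dx=\frac{\sin^{D+1}\alpha}{D\cos\alpha}+O\!\left(\frac{\sin^{D+1}\alpha}{D^{2}}\right),
\]
once one checks that the numerical factors $1/\cos\alpha$, $1/(D(D+1))$, $\sin^{2}\alpha/\cos^{3}\alpha$ and $1/((D+1)(D+3))$ are $O(1)$ (respectively $O(1/D^{2})$) with constants that do not depend on $D$. This last verification is the only delicate point, and it is exactly where the dichotomy in the hypothesis is used: if $\alpha$ is a fixed angle these quantities are literal constants, while if $\alpha=\alpha(D)\to0$ then $\alpha(D)$ is eventually bounded away from $\pi/2$, so $\cos\alpha$ is bounded below and $\sin^{2}\alpha/\cos^{3}\alpha\to0$ stays bounded; in both regimes the implied $O$–constant is uniform in $D$. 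The analytic content of the lemma is therefore just the single integration by parts together with this elementary bookkeeping.
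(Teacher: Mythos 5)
Your proof is correct, and it is essentially the paper's argument: a single integration by parts peels off the boundary term $\sin^{D+1}(\alpha)/(D\cos\alpha)$ (up to the harmless $1/(D+1)$ versus $1/D$ discrepancy, which you absorb correctly into the $O(\sin^{D+1}(\alpha)/D^2)$ error), and the remaining integral is then shown to be of lower order. The one place where your execution differs is in handling that remainder: the paper integrates by parts a second time to bound $\int_0^\alpha \sin^D(x)/\cos^2(x)\,dx$, whereas your substitution $s=\sin x$ turns the remainder into $\int_0^{\sin\alpha} s^{D+2}(1-s^2)^{-3/2}\,ds$, which you can bound in one line by freezing the denominator at its worst value $\cos^3\alpha$ and integrating $s^{D+2}$ exactly. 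This is a small but genuine simplification, and your closing remark correctly identifies where the hypothesis ($\alpha$ fixed in $(0,\pi/2)$ or tending to $0$) is used, namely to keep $1/\cos\alpha$ and $\sin^2\alpha/\cos^3\alpha$ bounded uniformly in $D$ --- the same point the paper relies on implicitly.
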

\begin{proof}
	We write
	\begin{equation}
		\int_{0}^{\alpha} \sin^D(x) dx = \int_{0}^{\alpha} \frac{\sin(x)}{D\cos(x)} \cdot ( D \cos(x) \sin^{D-1}(x)) dx.
	\end{equation}
	Integration by parts with $u = \sin(x)/(D\cos(x))$ and $dv = D \cos(x) \sin^{D-1}(x) dx$ yields
	\begin{equation}
		\int_{0}^{\alpha} \sin^D(x) dx = \frac{\sin^{D+1}(\alpha)}{D\cos(\alpha)} - \frac{1}{D} \int_0^{\alpha} \frac{\sin^D(x)}{\cos^2(x)}dx.
	\end{equation}
	Let $I$ denote $\int_0^{\alpha} \frac{\sin^D(x)}{\cos^2(x)}dx$. It remains to show that $I = O(\sin^{D+1}(\alpha)/D)$. We express $I$ as 
	\begin{equation}
		\int_0^{\alpha} \frac{\sin(x)}{D\cos^3(x)} \cdot (D\cos(x)\sin^{D-1}(x)) dx.
	\end{equation}
	We integrate $I$ by parts with $u = \sin(x)/(D\cos^3(x))$ and $dv = D\cos(x)\sin^{D-1}(x) dx$ to obtain
	\begin{equation}
		I = \frac{1}{D} \frac{\sin^{D+1}(\alpha)}{\cos^3(\alpha)} - \frac{1}{D} \int_0^{\alpha} \frac{1+2\sin^2(x)}{\cos^4(x)}\sin^D(x)dx
	\end{equation}
	Since the latter integral is positive, we have
	\begin{equation}\label{eq:I<sin^n+1(a)/n}
		I \leq \frac{1}{\cos^3(\alpha)} \cdot \frac{\sin^{D+1}(\alpha)}{D}.
	\end{equation}
	Since $I$ is positive for any $0 < \alpha < \pi/2$, \eqref{eq:I<sin^n+1(a)/n} implies that $I = O(\sin^{D+1}(\alpha)/D)$.
\end{proof}
We establish the asymptotic behaviours of $\tau(r_{\mvec{p}},d,D)$ and $\tau(r_{min},d,D)$ by analysing  the asymptotics of $\tau(r,d,D)$ defined in \eqref{def:tau_p} and later substituting $r_{\mvec{p}}$ and  $r_{min}$ for $r$ in $\tau(r,d,D)$.
\begin{theorem}\label{thm:tau(r)_asymptotic}
	Let $\tau(r,d,D)$ be defined in \eqref{def:tau_p}. Let $d$ be fixed and let $r$ be either fixed or tend to zero as $D \rightarrow \infty$. Then,
	\begin{equation}\label{eq:tau(r)_asymptotic}
		\text{$\tau(r,d,D) = \Theta\left( D^{\frac{d-2}{2}} r^{D-d} \right)$ as $D \rightarrow \infty$,}
	\end{equation}
	and the constants in $\Theta(\cdot)$ are independent of $D$.
\end{theorem}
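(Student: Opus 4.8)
The plan is to treat the two branches of the piecewise definition \eqref{def:tau_p} separately, reducing each to the asymptotics of a ratio of Gamma functions (coming from the general binomial coefficient) together with, in the case $d=1$, the integral asymptotics already established in \Cref{lemma:int_sin^n(x)_asymptotic_exp}. Throughout, the key analytic input is the standard estimate $\Gamma(x+a)/\Gamma(x+b)\sim x^{a-b}$ as $x\to\infty$, which gives $\Theta$-bounds with constants independent of $D$.

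First I would handle the case $1<d<D$. Expanding the general binomial coefficient via \eqref{eq:general_binom_coeff} with $i=(D-2)/2$ and $j=(D-d)/2$ gives
\[
\binom{\tfrac{D-2}{2}}{\tfrac{D-d}{2}} = \frac{\Gamma(D/2)}{\Gamma(d/2)\,\Gamma\bigl((D-d+2)/2\bigr)}.
\]
Since $d$ is fixed, $\Gamma(d/2)$ is a positive constant, and applying the Gamma-ratio asymptotic with $x=(D-d+2)/2$, $a=(d-2)/2$, $b=0$ shows this binomial coefficient is $\Theta\bigl(D^{(d-2)/2}\bigr)$. The remaining factor $(1-r^2)^{(d-2)/2}$ is $\Theta(1)$: it is a fixed positive number if $r$ is fixed, and tends to $1$ if $r\to0$ (here $d\ge 2$, and $r$ stays bounded away from $1$ in both regimes). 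Multiplying the three factors yields $\tau(r,d,D)=\Theta\bigl(D^{(d-2)/2}r^{D-d}\bigr)$.

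Next I would treat $d=1$. The same computation with $j=(D-1)/2$ gives $\binom{(D-2)/2}{(D-1)/2}=\Gamma(D/2)/\bigl(\sqrt{\pi}\,\Gamma((D+1)/2)\bigr)=\Theta(D^{-1/2})$, hence the prefactor $(D-1)\binom{(D-2)/2}{(D-1)/2}=\Theta(D^{1/2})$. For the integral I would apply \Cref{lemma:int_sin^n(x)_asymptotic_exp} with the exponent $D-2$ in place of $D$ (legitimate since $D-2\to\infty$) and angle $\alpha=\arcsin(r)$, which is either fixed or tends to $0$ as the lemma requires; using $\sin(\arcsin r)=r$ and $\cos(\arcsin r)=\sqrt{1-r^2}$ this gives
\[
\int_0^{\arcsin(r)}\sin^{D-2}(x)\,dx = \frac{r^{D-1}}{(D-2)\sqrt{1-r^2}} + O\!\left(\frac{r^{D-1}}{(D-2)^2}\right) = \Theta\!\left(\frac{r^{D-1}}{D}\right),
\]
the last step using $(1-r^2)^{-1/2}=\Theta(1)$ so that the leading term dominates the error for $D$ large. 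Combining, $\tau(r,1,D)=\Theta(D^{1/2})\cdot\Theta\bigl(D^{-1}r^{D-1}\bigr)=\Theta\bigl(D^{-1/2}r^{D-1}\bigr)$, which is precisely $\Theta\bigl(D^{(d-2)/2}r^{D-d}\bigr)$ at $d=1$.

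The only genuinely delicate point is ensuring all $\Theta$-constants are uniform in $D$ and consistent across the two regimes ``$r$ fixed'' and ``$r\to0$'': this reduces to the observation that $(1-r^2)^{(d-2)/2}$ and $(1-r^2)^{-1/2}$ are each bounded above and below by positive constants, which holds precisely because $r$ stays bounded away from $1$ in both regimes. The polynomial-versus-exponential separation ($D^{(d-2)/2}r^{D-d}\to 0$) plays no role in the argument; it is merely consistent with $\tau$ being a probability lower bound. Everything else is a routine invocation of the Gamma-ratio asymptotic and of \Cref{lemma:int_sin^n(x)_asymptotic_exp}.
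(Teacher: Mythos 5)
Your proof is correct and follows essentially the same route as the paper's: both cases are reduced to the Gamma-ratio asymptotic $\Gamma(z+a)/\Gamma(z+b)=\Theta(z^{a-b})$ for the general binomial coefficient, with \Cref{lemma:int_sin^n(x)_asymptotic_exp} supplying the integral asymptotics in the $d=1$ case and the factors $(1-r^2)^{(d-2)/2}$ and $(1-r^2)^{-1/2}$ absorbed into the $\Theta$-constants because $r$ stays bounded away from $1$. Your treatment is, if anything, slightly more explicit than the paper's about the uniformity of the constants across the two regimes for $r$.
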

\begin{proof}
	We prove \eqref{eq:tau(r)_asymptotic} for $d=1$ and $1<d<D$ separately. 
	
	First, assume that $d > 1$. By definition of $\tau(r,d,D)$, we have  
	\begin{equation}\label{eq:tau(r_min)_def_expanded}
			\tau(r,d,D)  = \binom{\frac{D-2}{2}}{\frac{D-d}{2}} r^{D-d}(1-r^2)^{\frac{d-2}{2}}.
	\end{equation}
	Let us first  determine the asymptotic behaviour of the binomial coefficient. Using the fact that $\Gamma(z+a)/\Gamma(z+b) = \Theta( z^{a-b})$ for large $z$ (see, e.g., \cite{Tricomi1951}), we obtain 
	\begin{equation} \label{eq:ratio_of_gamma_fun_asym_exp}
		\binom{\frac{D-2}{2}}{\frac{D-d}{2}} = \frac{\Gamma(\frac{D}{2})}{\Gamma(\frac{D-d+2}{2}) \Gamma(\frac{d}{2})} = \frac{\Gamma(\frac{D-d+2}{2} + \frac{d-2}{2})}{\Gamma(\frac{D-d+2}{2}) \Gamma(\frac{d}{2})} = \Theta \left( \left( \frac{D-d+2}{2} \right)^{\frac{d-2}{2}}\right) =  \Theta\left(D^{\frac{d-2}{2}}\right).
	\end{equation}
	To obtain\footnote{Here, we have also used the fact that if functions $f(x)$, $f'(x)$, $g(x)$ and $g'(x)$ satisfy $f(x) = \Theta(g(x))$ and $f'(x) = \Theta(g'(x))$ (as $x \rightarrow \infty$), then $f(x)f'(x) = \Theta(g(x)g'(x))$.} \eqref{eq:tau(r)_asymptotic}, we substitute \eqref{eq:ratio_of_gamma_fun_asym_exp} into \eqref{eq:tau(r_min)_def_expanded}. Note that $(1-r^2)^{\frac{d-2}{2}}$ is bounded above and bounded away from zero by constants independent of $D$; thus, it can be absorbed into the constants of $\Theta$. 
	
	Let us now prove \eqref{eq:tau(r)_asymptotic} for $d=1$. We have
	\begin{equation}\label{eq:t(r_min)_d=1_in_thm}
		\tau(r,d,D) = 
		(D-1) \cdot                          \binom{\frac{D-2}{2}}{\frac{D-1}{2}}\int_{0}^{\arcsin(r)} \sin^{D-2}(x) dx,
	\end{equation}
	where, by \eqref{eq:ratio_of_gamma_fun_asym_exp}, 
	\begin{equation}\label{eq:binom_coeff_sim_d=1}
		\binom{\frac{D-2}{2}}{\frac{D-1}{2}} = \Theta \left( D^{-\frac{1}{2}}  \right)               
	\end{equation}
	and, by \Cref{lemma:int_sin^n(x)_asymptotic_exp}, 
	\begin{equation}\label{eq:int_sin_dx_through_Theta}
			\int_{0}^{\arcsin(r)} \sin^{D-2}(x) dx =\Theta \left(\frac{1}{D-1} \frac{r^{D-1}}{\sqrt{1-r^2}} \right).
	\end{equation}
	By substituting \eqref{eq:binom_coeff_sim_d=1} and \eqref{eq:int_sin_dx_through_Theta} into \eqref{eq:t(r_min)_d=1_in_thm}, we obtain \eqref{eq:tau(r)_asymptotic} for $d = 1$. For similar reasons as stated above, we can relegate the term $1/\sqrt{1-r^2}$ in \eqref{eq:int_sin_dx_through_Theta} into the constants of $\Theta$. 
\end{proof}
Now, to obtain the asymptotics for $\tau(r_{\mvec{p}},d,D)$ and $\tau(r_{\min},d,D)$, we simply apply \Cref{thm:tau(r)_asymptotic} for $r = r_{\mvec{p}} = \epsilon/(L\| \mvec{x}^* - \mvec{p} \|)$ and $r = r_{\min} = \epsilon/(L R_{\max})$, respectively.

\begin{corollary}\label{cor:t(r)_asymp}
  Asymptotically for $D \to \infty$, keeping $d$, $\epsilon$ and $L$ fixed and letting $\| \mvec{x}^* - \mvec{p}\|$ be either fixed or tend to infinity as $D \rightarrow \infty$, the lower bounds \eqref{eq:RPX_eps_succ>tau_p} and \eqref{eq:RPX_eps_succ>tau} satisfy
	\begin{equation} \label{eq:tau(r_p)_asymptotic}
		\text{$\tau(r_{\mvec{p}}, d, D) = \Theta\left( D^{\frac{d-2}{2}} \left( \frac{\epsilon}{L \| \mvec{x}^* - \mvec{p} \|} \right)^{D-d} \right)$ as $D \rightarrow \infty$,}
	\end{equation}
	with $r_{\mvec{p}} = \epsilon/(L \rVert \mvec{x}^* - \mvec{p} \rVert)$  and where the constants in $\Theta(\cdot)$ are independent of $D$. Similarly, 
    \begin{equation} \label{eq:tau(r_p)_asymptotic}
		\text{$\tau(r_{\min}, d, D) = \Theta\left( D^{\frac{d-2}{2}} \left( \frac{\epsilon}{L R_{\max}} \right)^{D-d} \right)$ as $D \rightarrow \infty$,}
	\end{equation}
	with $r_{\min} = \epsilon/(L R_{\max})$.
\end{corollary}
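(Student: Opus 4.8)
The plan is to read off \Cref{cor:t(r)_asymp} directly from \Cref{thm:tau(r)_asymptotic}. That theorem already establishes $\tau(r,d,D)=\Theta(D^{(d-2)/2}r^{D-d})$ as $D\to\infty$ for $d$ fixed and $r$ either fixed or tending to $0$, with the $\Theta$-constants independent of $D$; so the only thing left to do is to check that the two specific values $r=r_{\mvec{p}}:=\epsilon/(L\|\mvec{x}^*-\mvec{p}\|)$ and $r=r_{\min}:=\epsilon/(LR_{\max})$ satisfy the hypotheses of that theorem, and then substitute.

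First I would deal with $r_{\mvec{p}}$. Here $d$, $\epsilon$ and $L$ are held fixed, while $\|\mvec{x}^*-\mvec{p}\|$ is assumed to be either a fixed number or to tend to $+\infty$ as $D\to\infty$; correspondingly, $r_{\mvec{p}}$ is either a fixed positive number or a sequence tending to $0$. In either case $0<r_{\mvec{p}}<1$: positivity is immediate, and $r_{\mvec{p}}<1$ because the bound \eqref{eq:RPX_eps_succ>tau_p} concerns $\mvec{p}\in\mathcal{X}\setminus G_\epsilon$, while the ball $B_{\epsilon/L}(\mvec{x}^*)$ of \Cref{ass:B_eps_is_a_ball} is contained in $G_\epsilon$ (this inclusion is exactly the computation in the proof of \Cref{prop:lower_bound_ball}), so $\mvec{p}\notin B_{\epsilon/L}(\mvec{x}^*)$ and hence $\|\mvec{x}^*-\mvec{p}\|>\epsilon/L$. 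Moreover $\sup_D r_{\mvec{p}}<1$, so the factor $(1-r_{\mvec{p}}^2)^{(d-2)/2}$ in \eqref{def:tau_p} stays bounded above and bounded away from $0$ by $D$-independent constants --- precisely the property that allows \Cref{thm:tau(r)_asymptotic} to absorb it into $\Theta(\cdot)$. Thus \Cref{thm:tau(r)_asymptotic} applies with $r=r_{\mvec{p}}$ and yields the first claimed asymptotics.

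The argument for $r_{\min}$ is the same with $R_{\max}$ in place of $\|\mvec{x}^*-\mvec{p}\|$. In each of the situations discussed just before \Cref{thm:RPX_eps_succ_gen_uniform_bound} --- $R_{\max}=\|\mvec{x}^*-\mvec{p}\|$, or $R_{\max}$ the (finite) supremum of a bounded parameter sequence $\{\|\mvec{x}^*-\mvec{p}^i\|\}$, or $R_{\max}=\diam\mathcal{X}$ (which equals $2\sqrt{D}$ for the box $\mathcal{X}=[-1,1]^D$) --- the constant $R_{\max}$ is either fixed or grows to infinity, and $R_{\max}>\epsilon/L$, so $r_{\min}\in(0,1)$ is either fixed or tends to $0$ and $\sup_D r_{\min}<1$. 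Applying \Cref{thm:tau(r)_asymptotic} with $r=r_{\min}$ gives the second claimed asymptotics.

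There is essentially no obstacle here; the proof is a routine specialization. The single point worth a moment's attention is that the factor $r^{D-d}$ must be carried through explicitly --- it genuinely depends on $D$ via $r$ and cannot be swallowed by $\Theta(\cdot)$. What the $\Theta$-constants in \Cref{thm:tau(r)_asymptotic} do absorb are the asymptotics $\binom{(D-2)/2}{(D-d)/2}=\Theta(D^{(d-2)/2})$ from \eqref{eq:ratio_of_gamma_fun_asym_exp}, the bounded factor $(1-r^2)^{(d-2)/2}$, and (in the case $d=1$) the $\Theta(1/\sqrt{1-r^2})$ coming from \Cref{lemma:int_sin^n(x)_asymptotic_exp}; all three are legitimately $D$-independent precisely because we have verified $r$ stays bounded away from $1$.
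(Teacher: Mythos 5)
Your proof is correct and follows the same route as the paper: both simply invoke \Cref{thm:tau(r)_asymptotic} with $r=r_{\mvec{p}}$ and $r=r_{\min}$ after noting that each is fixed or tends to zero as $D\to\infty$. The extra care you take to verify $0<r<1$, $\sup_D r<1$, and hence that $(1-r^2)^{(d-2)/2}$ is boundable by $D$-independent constants, is a legitimate (and slightly more thorough) filling-in of details the paper leaves implicit.
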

\begin{proof}
	Note that $r_{\mvec{p}} = \epsilon/(L\| \mvec{x}^* - \mvec{p} \|)$ is either fixed or tends to zero as $D \rightarrow \infty$. Then, the result follows from  \Cref{thm:tau(r)_asymptotic}. 
\end{proof}

 \Cref{cor:t(r)_asymp} shows that for any $\mvec{p}$ not in $G_{\epsilon}$, the lower bounds in \Cref{thm:RPX_eps_succ>t(r_p)} and \Cref{thm:RPX_eps_succ_gen_uniform_bound} decrease exponentially with $D$, which is as expected since problem \eqref{eq: GO} is generally NP-hard. Note that this decrease is slower for larger values of $d$ or $\mvec{p}$ closer to $\mvec{x}^*$, which is reassuring.

\subsection{Comparing \eqref{eq: AREGO} to simple random search}

Using the above lower bounds on the probability of $\epsilon$-success of the reduced problem \eqref{eq: AREGO}, we now compare \eqref{eq: AREGO} to a simple random search method to understand the relative performance of \eqref{eq: AREGO} and when it is beneficial to use it for general functions. As a baseline for comparison, we use Uniform Sampling (US) and we restrict ourselves, in this section, to the specific case $\mathcal{X} = [-1,1]^D$ (as this will allow us to estimate the probability of success of US). We start off with the derivation of a lower bound for the probability of $\epsilon$-success of US and the computation of its asymptotics. 

Note that if a uniformly sampled point falls inside $B_{\epsilon/L}(\mvec{x}^*)$ then US is $\epsilon$-successful. This implies that
\begin{equation}\label{eq:US_eps_succ>tau_us}
	\prob[\text{US is $\epsilon$-successful}] \geq \frac{\vol(B_{\epsilon/L}(\mvec{x^*}))}{\vol(\mathcal{X})} = \frac{\pi^{D/2}}{2^D \Gamma(\frac{D}{2}+1)}\left( \frac{\epsilon}{L} \right)^{D}:= \tau_{us},
\end{equation}
where we have used the fact that $\vol(B_{\epsilon/L}(\mvec{x^*})) = \frac{\pi^{D/2}}{\Gamma(\frac{D}{2}+1)}\left( \frac{\epsilon}{L} \right)^{D}$ (see \cite[Equation 5.19.4]{NIST}) and that $\vol(\mathcal{X}) = 2^D$.

Using Stirling's approximation, it is straightforward to establish the asymptotic behaviour of the lower bound $\tau_{us}$.
\begin{lemma}
	Let $\tau_{us}$ be defined in \eqref{eq:US_eps_succ>tau_us} and let $\epsilon$ and $L$ be fixed. Then, 
	\begin{equation}\label{eq:tau_us_asymptotic}
		\text{$\tau_{us} = \Theta\left(D^{-\frac{D}{2}-\frac{1}{2}} \left( \frac{\pi e}{2} \right)^{\frac{D}{2}}  \left( \frac{\epsilon}{L} \right)^D\right)$ as $D \rightarrow \infty$.}
	\end{equation}
\end{lemma}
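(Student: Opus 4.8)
The plan is to apply Stirling's approximation directly to the Gamma function appearing in the definition of $\tau_{us}$ in \eqref{eq:US_eps_succ>tau_us}. Recall that $\tau_{us} = \frac{\pi^{D/2}}{2^D \Gamma(D/2+1)}\left(\frac{\epsilon}{L}\right)^D$, so the only non-elementary factor is $\Gamma(D/2+1)$; everything else is already an exact power of $D$-dependent constants times $(\epsilon/L)^D$, which will be absorbed or carried through unchanged.

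First I would recall the Stirling asymptotic $\Gamma(z+1) = \Theta\left(z^{z+1/2} e^{-z}\right)$ as $z \to \infty$ (this is the standard form $\Gamma(z+1) \sim \sqrt{2\pi z}\,(z/e)^z$, and the $\sqrt{2\pi}$ is a constant absorbed into $\Theta$). Substituting $z = D/2$ gives
\begin{equation}
  \Gamma\!\left(\tfrac{D}{2}+1\right) = \Theta\!\left( \left(\tfrac{D}{2}\right)^{\frac{D}{2}+\frac12} e^{-\frac{D}{2}} \right) = \Theta\!\left( D^{\frac{D}{2}+\frac12} \, 2^{-\frac{D}{2}} \, e^{-\frac{D}{2}} \right),
\end{equation}
where the powers of $2$ are rewritten out of $(D/2)^{D/2+1/2}$; the leftover $2^{-1/2}$ is a constant absorbed into $\Theta$.

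Next I would substitute this into $\tau_{us}$ and collect powers. We get
\begin{equation}
  \tau_{us} = \Theta\!\left( \frac{\pi^{D/2}}{2^D \cdot D^{\frac{D}{2}+\frac12}\, 2^{-\frac{D}{2}}\, e^{-\frac{D}{2}}} \left(\tfrac{\epsilon}{L}\right)^D \right) = \Theta\!\left( D^{-\frac{D}{2}-\frac12}\, \pi^{\frac{D}{2}}\, e^{\frac{D}{2}}\, 2^{-\frac{D}{2}} \left(\tfrac{\epsilon}{L}\right)^D \right),
\end{equation}
using $2^{-D} \cdot 2^{D/2} = 2^{-D/2}$. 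Regrouping $\pi^{D/2} e^{D/2} 2^{-D/2} = (\pi e / 2)^{D/2}$ yields exactly \eqref{eq:tau_us_asymptotic}, and since $\epsilon, L$ are fixed the constants in $\Theta(\cdot)$ are indeed independent of $D$.

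I do not anticipate a genuine obstacle here — the proof is a one-line bookkeeping exercise once Stirling's formula is invoked. The only point requiring a little care is the correct handling of the $\Theta$ constants: one must be sure that all $D$-independent multiplicative factors (the $\sqrt{2\pi}$ from Stirling, the $2^{-1/2}$ from splitting $(D/2)^{1/2}$, and any fixed $\epsilon/L$-dependent quantities) are legitimately absorbed, and that no $D$-dependent factor is accidentally swept into the constant. Writing $\Gamma(D/2+1) = \Theta(\cdots)$ rather than an exact asymptotic equivalence already licenses this, so the argument is clean.
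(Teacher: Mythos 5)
Your proof is correct and follows essentially the same route as the paper: the paper likewise invokes Stirling's approximation in the form $\Gamma\left(\tfrac{D}{2}+1\right) = \Theta\left(e^{-D/2}\left(\tfrac{D}{2}\right)^{(D+1)/2}\right)$ and substitutes it into the definition of $\tau_{us}$. Your bookkeeping of the powers of $2$, $\pi$, $e$ and $D$, and your handling of the constants absorbed into $\Theta(\cdot)$, are all accurate.
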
 
\begin{proof}
	By Stirling's approximation (see \cite[Equation 5.11.7]{NIST}),
	\begin{equation}\label{eq:Gamma_stirling_app}
		\text{$\Gamma\left(\frac{D}{2}+1\right) = \Theta \left(e^{-\frac{D}{2}} \left(\frac{D}{2}\right)^{\frac{D+1}{2}} \right)$ as $D \rightarrow \infty$.} 
	\end{equation}
	By substituting \eqref{eq:Gamma_stirling_app} into \eqref{eq:US_eps_succ>tau_us}, we obtain the desired result.
\end{proof}

Let us now compare the lower bound $\tau_{us}$ of US to the lower bound $\tau(r_{\mvec{p}}, d, D)$ for \eqref{eq: AREGO}. It is clear from the analysis of $\tau(r_{\mvec{p}}, d, D)$ in \Cref{sec:tau_asymp_exp_gen_obj} that the probability of $\epsilon$-success of \eqref{eq: AREGO} is higher if $\mvec{p}$ is closer to the set of global minimizers. In the next theorem, we determine a threshold distance $\Delta_0$ between $\mvec{p}$ and a global minimizer $\mvec{x}^*$ such that $\tau(r_{\mvec{p}}, d, D)$ and $\tau_{us}$ are approximately equal to each other. This would tell us how close $\mvec{p}$ should be to $\mvec{x}^*$ for \eqref{eq: AREGO} to have a larger lower bound for the probability of success than that of US. The analysis is done in the asymptotic regime.
\begin{theorem}\label{thm:t(r_p)vs_t_us}
	Suppose that \Cref{ass:f_is_Lipschitz} holds, and that $\mathcal{X} = [-1,1]^D$. Let $\mvec{x}^*$ be a global minimizer of \eqref{eq: GO} satisfying \Cref{ass:B_eps_is_a_ball}.
	Let $\tau(r_{\mvec{p}}, d, D)$ and $\tau_{us}$ be defined in \Cref{thm:RPX_eps_succ>t(r_p)} and \eqref{eq:US_eps_succ>tau_us}, respectively. Let $\epsilon$, $L$, $d$ be fixed and let $\Delta_0 = \sqrt{\frac{2D}{\pi e}}$. Then,
	\begin{enumerate}
		\item[a)] If $\displaystyle \lim_{D\rightarrow \infty}\frac{\Delta_0 }{\| \mvec{x}^* -  \mvec{p}\|} = \psi > 1$, then $\tau(r_{\mvec{p}}, d, D)/\tau_{us} \rightarrow \infty$ as $D \rightarrow \infty$.
		\item[b)] If $\displaystyle \lim_{D\rightarrow \infty}\frac{\Delta_0 }{\| \mvec{x}^* -  \mvec{p}\|} = \psi < 1$, then $\tau(r_{\mvec{p}}, d, D)/\tau_{us} \rightarrow 0$ as $D \rightarrow \infty$.
	\end{enumerate}
\end{theorem}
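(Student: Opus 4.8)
The plan is to combine the two asymptotic expansions already at hand — that of $\tau(r_{\mvec{p}},d,D)$ in \Cref{cor:t(r)_asymp} and that of $\tau_{us}$ in the lemma preceding \eqref{eq:tau_us_asymptotic} — to compute the ratio $\tau(r_{\mvec{p}},d,D)/\tau_{us}$ in closed asymptotic form, and then to read off both cases from the exponential factor $(\Delta_0/\|\mvec{x}^*-\mvec{p}\|)^D$ that emerges.

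First I would write $\rho := \|\mvec{x}^*-\mvec{p}\|$ and observe that in both cases the limit $\psi$ is finite, so, since $\Delta_0 = \sqrt{2D/(\pi e)} \to \infty$, we have $\rho = \Delta_0/(\Delta_0/\rho) \to \infty$; in particular $\mvec{p}\notin G_\epsilon$ for $D$ large, and \Cref{cor:t(r)_asymp} applies and gives $\tau(r_{\mvec{p}},d,D) = \Theta\bigl(D^{(d-2)/2}(\epsilon/(L\rho))^{D-d}\bigr)$ with $\Theta$-constants independent of $D$. Dividing by $\tau_{us}=\Theta\bigl(D^{-D/2-1/2}(\pi e/2)^{D/2}(\epsilon/L)^{D}\bigr)$, the powers of $\epsilon/L$ collapse to the $D$-independent constant $(L/\epsilon)^{d}$, and collecting the remaining powers of $D$ and of $\pi e/2$ — using $\Delta_0^{2}=2D/(\pi e)$ — yields, after the elementary identity $\rho^{d}(\Delta_0/\rho)^{D}=\Delta_0^{d}(\Delta_0/\rho)^{D-d}$ and $\Delta_0^{d}=\Theta(D^{d/2})$ (recall $d$ is fixed),
\[
\frac{\tau(r_{\mvec{p}},d,D)}{\tau_{us}} = \Theta\!\left( D^{\frac{d-1}{2}}\left(\frac{L\rho}{\epsilon}\right)^{d}\left(\frac{\Delta_0}{\rho}\right)^{D}\right) = \Theta\!\left( D^{\,d-\frac12}\left(\frac{\Delta_0}{\rho}\right)^{D-d}\right).
\]

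With the ratio in this form the conclusion is immediate. For case a), $\Delta_0/\rho \to \psi>1$ means that eventually $\Delta_0/\rho \geq (1+\psi)/2 > 1$, so $(\Delta_0/\rho)^{D-d}$ grows geometrically and dominates the polynomial factor $D^{\,d-1/2}$; hence the ratio tends to $+\infty$. For case b), since $\Delta_0/\rho>0$ always we have $0\le\psi<1$; choosing any $c$ with $\psi<c<1$, eventually $\Delta_0/\rho<c$, so $(\Delta_0/\rho)^{D-d}<c^{D-d}$ decays geometrically and beats $D^{\,d-1/2}$, whence the ratio tends to $0$.

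I do not expect a genuine obstacle here: the argument is essentially bookkeeping of exponents together with a polynomial-versus-geometric comparison. The one point requiring a little care is verifying that \Cref{cor:t(r)_asymp} is legitimately invoked, i.e. that $\rho\to\infty$ (guaranteed by $\psi$ being finite), and keeping the algebra honest when $\rho$ itself grows with $D$; the identity $\rho^{d}(\Delta_0/\rho)^{D}=\Delta_0^{d}(\Delta_0/\rho)^{D-d}$ is precisely what turns an a priori awkward "polynomial in $D$ times possibly-growing $\rho^{d}$ times exponential" into a clean "polynomial in $D$ times geometric", so that both limits follow at once and uniformly in the sub-case $\psi=0$.
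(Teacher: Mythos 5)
Your argument is correct and is essentially the paper's own proof: both form the ratio of the $\Theta$-expansions from \Cref{cor:t(r)_asymp} and \eqref{eq:tau_us_asymptotic} and reduce it to $\Theta\bigl(D^{(2d-1)/2}(\Delta_0/\|\mvec{x}^*-\mvec{p}\|)^{D-d}\bigr)$, which is exactly the paper's form $\Theta\bigl(((\Delta_0/\|\mvec{x}^*-\mvec{p}\|)\,D^{(2d-1)/(2(D-d))})^{D-d}\bigr)$ with the polynomial factor pulled outside, and then conclude by the same geometric-versus-polynomial comparison. Your explicit check that $\|\mvec{x}^*-\mvec{p}\|\to\infty$ (so that \Cref{cor:t(r)_asymp} legitimately applies) is a small point of care the paper leaves implicit.
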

\begin{proof}
	From \eqref{eq:tau(r_p)_asymptotic} and \eqref{eq:tau_us_asymptotic}, we have
	\begin{equation}\label{eq:tau(r_p)/tau(us)=Theta}
		\begin{aligned}
			\frac{\tau(r_{\mvec{p}}, d, D)}{\tau_{us}} = \frac{\Theta\left( D^{\frac{d-2}{2}} \left( \frac{\epsilon}{L \| \mvec{x}^* - \mvec{p} \|} \right)^{D-d} \right)}{\Theta\left(\frac{1}{\sqrt{D}}\left( \frac{\pi e}{2D} \right)^{\frac{D}{2}} \left( \frac{\epsilon}{L} \right)^D\right)} & \stackrel{\footnotemark}{=}\Theta\left( \left(\frac{\epsilon}{L}\right)^{-d} \left( \frac{2}{\pi e} \right)^{D/2} D^{\frac{D+d-1}{2}} \| \mvec{x}^* - \mvec{p} \|^{d-D} \right) \\
			& = \Theta \Bigg( \Bigg(\underbrace{\left[ \frac{\sqrt{2 D/\pi e}}{\| \mvec{x}^*-\mvec{p} \|} \right]}_\text{$= \Delta_0/\| \mvec{x}^*-\mvec{p} \|$} \cdot  D^{\frac{2d-1}{2(D-d)}} \Bigg)^{D-d} \Bigg),
		\end{aligned}
		\footnotetext{Here, we use the fact that if functions $f(x)$, $f'(x)$, $g(x)$ and $g'(x)$ satisfy $f(x) = \Theta(g(x))$ and $f'(x) = \Theta(g'(x))$ (as $x \rightarrow \infty$), then $f(x)/f'(x) = \Theta(g(x)/g'(x))$.}
	\end{equation}
	Note that in the second line there is a term $\left(\frac{\epsilon}{L}\right)^{-d}\left(\frac{2}{\pi e}\right)^{d/2}$ missing inside $\Theta$, which we removed as it is independent of $D$. Now, by definition of $\Theta$, \eqref{eq:tau(r_p)/tau(us)=Theta} implies that there exist positive constants $M_1$ and $M_2$ such that 
	\begin{equation}\label{eq:M_1<tau(r_p)/t_us<M_2}
		 M_1 \left(\frac{\Delta_0}{\| \mvec{x}^* - \mvec{p} \|} D^{\frac{2d-1}{2(D-d)}} \right)^{D-d} \leq \frac{\tau(r_{\mvec{p}}, d,D)}{\tau_{us}} \leq M_2 \left(\frac{\Delta_0}{\| \mvec{x}^* - \mvec{p} \|} D^{\frac{2d-1}{2(D-d)}} \right)^{D-d}
	\end{equation}
	as $D \to \infty$.
	Note that $D^{\frac{2d-1}{2(D-d)}} \rightarrow 1$ as $D \rightarrow \infty$. Hence, if $\Delta_0/\| \mvec{x}^* - \mvec{p} \| \rightarrow \psi > 1$ then both lower and upper bounds in \eqref{eq:M_1<tau(r_p)/t_us<M_2} tend to infinity implying that $\tau(r_{\mvec{p}}, d, D)/\tau_{us} \rightarrow \infty$. On the other hand,  if $\Delta_0/\| \mvec{x}^* - \mvec{p} \| \rightarrow \psi < 1$ then both lower and upper bounds in \eqref{eq:M_1<tau(r_p)/t_us<M_2} tend to zero implying that $\tau(r_{\mvec{p}}, d, D)/\tau_{us} \rightarrow 0$.
	\end{proof}
\Cref{thm:t(r_p)vs_t_us} tells us that the distance between $\mvec{p}$ and $\mvec{x}^*$ (in the asymptotic setting) must be no greater than $\Delta_0 \approx 0.48\sqrt{D}$ for $\tau(r_{\mvec{p}}, d, D)$ to be larger than $\tau_{us}$ in the case $\mathcal{X} = [-1,1]^D$. Note that, since the distance between the origin and a corner of $\mathcal{X}$ is equal to $\sqrt{D}$ ($> 0.48\sqrt{D}$),  there is no point $\mvec{p}$ such that the ball of radius $\Delta_0$ centred at $\mvec{p}$ covers all points in $\mathcal{X}$. In other words, in the specific case $\mathcal{X} = [-1,1]^D$, for any $\mvec{p}$ in $\mathcal{X}$, there always exists $\mvec{x}^*$ for which $\tau(r_{\mvec{p}}, d, D)$ is smaller than $\tau_{us}$; on the other hand, if $\mvec{p} = \mvec{0}$ and $\mvec{x}^*$ is close to the origin then $\tau(r_{\mvec{p}}, d, D) > \tau_{us}$.  Note also that $\Delta_0$ has no dependence on the embedding subspace dimension $d$. This is due to the asymptotic nature of the analysis: in \eqref{eq:M_1<tau(r_p)/t_us<M_2}, we see that both inequalities depend on $d$, but the dependence diminishes as $D \rightarrow \infty$ since $d$ is kept fixed. Although the asymptotic analysis shows no significant dependence on the subspace dimension, numerical experiments show that the value of $d$ has a notable effect on success of \eqref{eq: AREGO}. In \Cref{fig:RPXvsUS}, we plot $\tau(r_{\mvec{p}}, d, D)$ as a function of $\| \mvec{x}^* - \mvec{p} \|$ for different values of $d$ with $D$ fixed at 200. The lower bound $\tau_{us}$ of US is represented by a black horizontal line. We see that, for larger $d$, $\tau(r_{\mvec{p}}, d, D)$ decreases at a slower rate and has greater threshold distance before becoming smaller than $\tau_{us}$. 
\begin{figure}[!t]
	\centering
	\includegraphics[scale = 0.5]{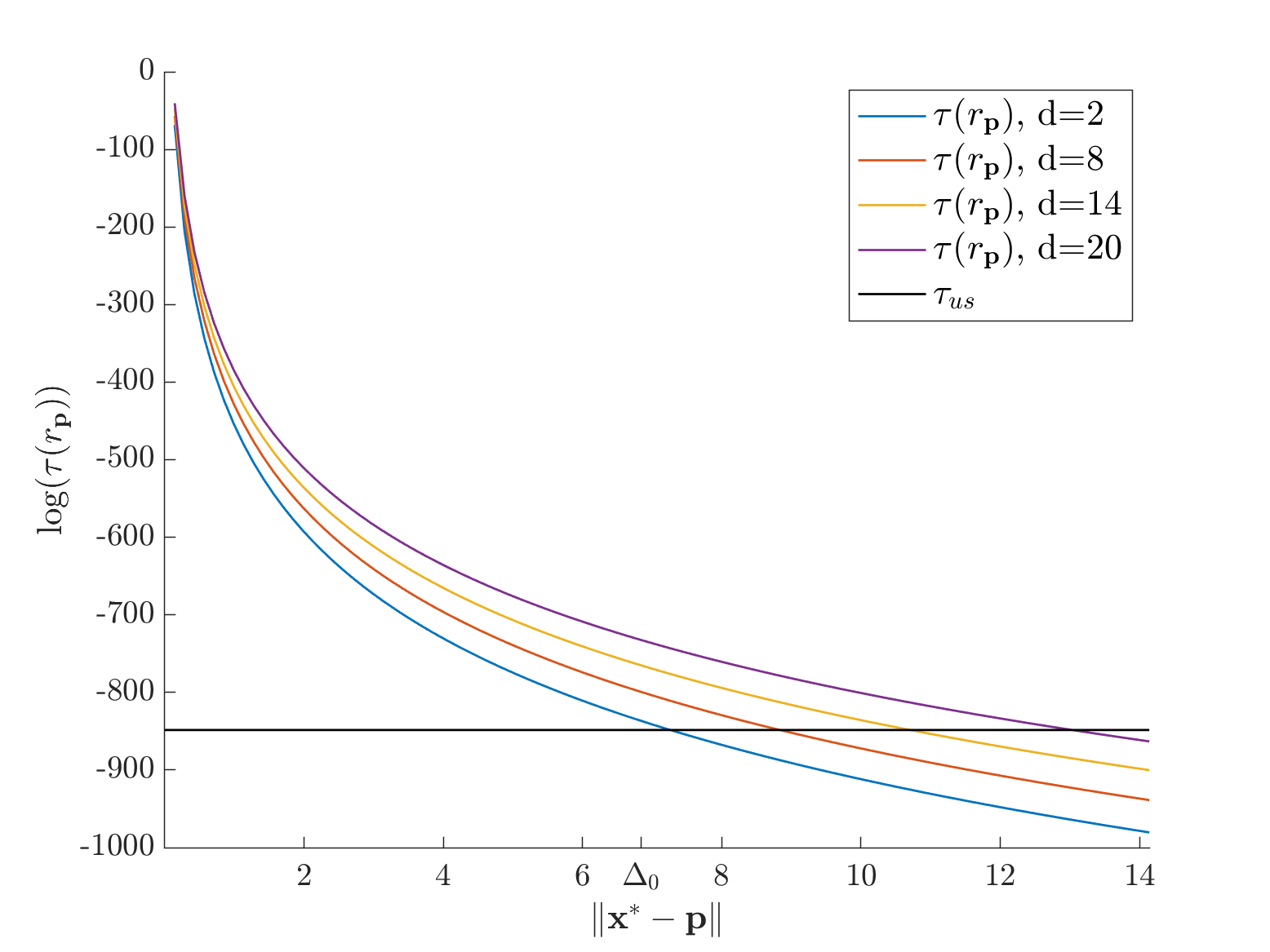}
	\caption{A plot of $\tau(r_{\mvec{p}})$ versus $\| \mvec{x}^* - \mvec{p} \|$ for different values of the subspace embedding dimension $d$. The lower bound $\tau_{us}$ of US does not depend on $\| \mvec{x}^* - \mvec{p} \|$ and, thus, it is displayed as a straight horizontal line.}
	\label{fig:RPXvsUS}
\end{figure}
\begin{rem}
	An important distinction must be made between the implications of the $\epsilon$-success of \eqref{eq: AREGO} and the $\epsilon$-success of US in solving the original problem \eqref{eq: GO}. Note that the $\epsilon$-success of US means that US has sampled a point that lies in $G_{\epsilon}$, which in turn implies that US has successfully (approximately) solved \eqref{eq: GO}. This is not the case for \eqref{eq: AREGO}. Recall that $\epsilon$-success of \eqref{eq: AREGO} by definition means that there is an approximate solution $\mvec{x}^*$ to \eqref{eq: GO} that lies in the embedded $d$-dimensional subspace. One needs to perform an additional global search over the subspace to locate $\mvec{x}^*$. Therefore, for an entirely fair comparison between the two approaches, this additional computational complexity should be taken into account. 
\end{rem}

\section{X-REGO: an algorithmic framework for global optimization using random embeddings}
\label{sec: XREGO}

This section presents the proposed algorithmic framework for global optimization using random embeddings, named X-REGO by analogy with \cite{Cartis2020b} (see the Introduction for distinctions between these variants). X-REGO is a generic algorithmic framework that replaces the high-dimensional original problem \eqref{eq: GO} by a sequence of low-dimensional random problems of the form \eqref{eq: AREGO}; these reduced random problems can then be solved using any global --- and in practice, even  a local --- optimization solver.

Note that the $k$th embedding in X-REGO is determined by a realization $\tilde{\mtx{A}}^k = \mtx{A}^k(\mvec{\omega}^k)$ of the random Gaussian matrix $\mtx{A}^k \in \mathbb{R}^{D \times d^{k}}$, for some (deterministic) $d^k \in \{1, \dots,D-1\}$. For generality of our analysis, we also assume that the parameter $\mvec{p}$ in \eqref{eq: AREGO} is a random variable. The $k$th embedding is drawn at the point $\tilde{\mvec{p}}^{k-1} = \mvec{p}^{k-1}(\mvec{\omega}^{k-1})$, a realization of the random variable $\mvec{p}^{k-1}$, assumed to have support included in $\mathcal{X}$. Note that this definition includes deterministic choices for $\mvec{p}^{k-1}$, by writing it as a random variable with support equal to a singleton (deterministic and stochastic selection rules for the $\mvec{p}$ are given below). 

\begin{algorithm}[h!]
	\caption{$\mathcal{X}$-Random Embeddings for Global Optimization (X-REGO) applied to~\eqref{eq: GO}}
	\label{alg:XREGO}
	\begin{algorithmic}[1]
		\State Initialize $d^1 \in \{1, 2, \dots, D-1\}$ and $\tilde{\mvec{p}}^0 \in \mathcal{X}$.
		\For{\text{$k \geq 1$ until termination}} \label{termination}
		\State Draw $\tilde{\mtx{A}}^k$, a realization of the $D\times d^{k}$ Gaussian matrix $\mtx{A}^k$. \label{line:drawA}
		\State Calculate $\tilde{\mvec{y}}^k$ by solving approximately and possibly, probabilistically,
		\begin{equation}\label{prob: AREGO_subproblem_re}
			\tag{$\widetilde{\text{RP}\mathcal{X}^k}$}
			\begin{aligned} 
				\tilde{f}^k_{min} = \min_{\mvec{y}\in\mathbb{R}^d} & \; f(\tilde{\mtx{A}}^k \mvec{y} + \tilde{\mvec{p}}^{k-1}) \\
				\text{subject to} & \; \tilde{\mtx{A}}^k \mvec{y} + \tilde{\mvec{p}}^{k-1} \in \mathcal{X}. 
			\end{aligned}
		\end{equation}
		\State Let
		\begin{equation} \label{eq: xck}
			\tilde{\mvec{x}}^k := \tilde{\mtx{A}}^k \tilde{\mvec{y}}^k + \tilde{\mvec{p}}^{k-1}.
		\end{equation} 
		\State Choose (deterministically or randomly) $\tilde{\mvec{p}}^k \in \mathcal{X}$. \label{line:choosep}
		\State Choose $d^{k+1} \in \{1,2,\dots,D-1\}$.
		\EndFor
	\end{algorithmic}
\end{algorithm}

Each iteration of X-REGO solves -- approximately and possibly, with a certain probability -- a realization \eqref{prob: AREGO_subproblem_re} of the random problem 
\begin{equation} \label{prob: AREGO_subproblem}
	\tag{$\text{RP}\mathcal{X}^k$}
	\begin{aligned} 
		f^k_{min} = \min_{\mvec{y}} & \; f(\mtx{A}^k \mvec{y} + \mvec{p}^{k-1}) \\
		\text{subject to} & \; \mtx{A}^k \mvec{y} + \mvec{p}^{k-1} \in \mathcal{X}. 
	\end{aligned}
\end{equation}

As such, X-REGO can be seen as a stochastic process: additionally to $\tilde{\mvec{p}}^k$, and $\tilde{\mtx{A}}^k$, each algorithm realization provides sequences  $\tilde{\mvec{x}}^k = \mvec{x}^k(\mvec{\omega}^k)$, $\tilde{\mvec{y}}^k = \mvec{y}^k(\mvec{\omega}^k)$ and $\tilde{f}_{min}^k = f_{min}^k(\mvec{\omega}^k)$, for $k \geq 1$, that are realizations of the random variables $\mvec{x}^k$, $\mvec{y}^k$ and $f_{min}^k$, respectively. 
To calculate $\tilde{\mvec{y}}^k$, \eqref{prob: AREGO_subproblem_re} may be solved to some required accuracy using a deterministic global optimization algorithm that is allowed to fail with a certain probability; or employing a stochastic algorithm, so that $\tilde{\mvec{y}}^k$ is only guaranteed to be an approximate global minimizer of  \eqref{prob: AREGO_subproblem_re}  (at least) with a certain probability. This allows us to account for solvers having some stochastic component (multistart methods, genetic algorithms, ...), or deterministic solvers that may fail in some cases due, e.g., to a computational budget shortage.

Note also that the choice of the random variable $\mvec{p}^k$ and of the subspace dimension $d^k$ provide some flexibility in the algorithm. For $\mvec{p}^k$, possibilities include: 
\begin{itemize}
    \item $\mvec{p}^k = \mvec{p}$: all the random embeddings explored are drawn at the same point (in case $\mvec{p}$ is a fixed vector in $\mathcal{X}$), or according to the same distribution (if $\mvec{p}$ is a random variable),
    \item The sequence $\mvec{p}^0, \mvec{p}^1, \dots$ can be constructed dynamically during the optimization, e.g., based on the information gathered so far on the objective. For example, one may choose $\mvec{p}^k = \mvec{x}_{opt}^k$, where $\mvec{x}_{opt}^k$ is the best point found up to the $k$th embedding:
    \begin{equation} \label{eq: xoptk}
	\mvec{x}_{opt}^k := \arg \min \{ f(\mvec{x}^1), f(\mvec{x}^2), \dots, f(\mvec{x}^k)\}.
\end{equation}
\end{itemize}

Note that \eqref{prob: AREGO_subproblem_re} is always feasible for all choices of $\mvec{p}^k$ ($\mvec{y} = 0$ is feasible since $\tilde{\mvec{p}}^k \in \mathcal{X}$). However, it may happen that this is the only feasible point of \eqref{prob: AREGO_subproblem_re}; to avoid this situation we may assume that $\mvec{p}^k$ is in the interior of $\mathcal{X}$. This latter assumption is not needed for our convergence results to hold, but it is a desirable assumption from a numerical point of view.

 Regarding the subspace dimension $d^k$, one can be for example choose a fixed value based on the computational budget available for the reduced problem, or $d^k$ can be progressively increased, using a warm start in each embedding. We refer the reader to \Cref{sec: Numerics} for a numerical comparison of some of those strategies.

The termination in \Cref{termination} could be set to  a given maximum number of embeddings, or could check that no significant progress in decreasing the objective function has been achieved over the last few embeddings, compared to the value $f(\tilde{\mvec{x}}^k_{opt})$. For generality, we leave it unspecified here.

\section{Global convergence of X-REGO to a set of global \texorpdfstring{$\epsilon$}{eps}-minimizers}
\label{sec:convergence}

The convergence results presented in this paper extend the ones given in \cite{Cartis2020b}, in which X-REGO (with fixed subspace dimension $d^k = d \geq d_e$ for all $k$) is proven to converge for functions with low-effective dimension $d_e$. \Cref{sec:assProof} is devoted to a generic convergence analysis of X-REGO, under generic assumptions on the probability of $\epsilon$-success of \eqref{prob: AREGO_subproblem} and on the probability of success of the solver to find an approximate minimizer of its realisation \eqref{prob: AREGO_subproblem_re}, while \Cref{sec:convergencegenf} presents the application of these results to arbitrary Lipschitz-continuous objectives, building on the results presented in the previous sections to show the validity of the $\epsilon$-success assumption. 

\subsection{A general convergence framework for X-REGO} \label{sec:assProof}
 This section recalls results in \cite{Cartis2020b} that are needed for our main convergence results in the next section.
We show that $\mvec{x}^k_{opt}$ defined in \eqref{eq: xoptk} converges to the set of $\epsilon$-minimizers $G_{\epsilon}$ almost surely as $k \rightarrow \infty$ (see  \Cref{thm: glconv}). Intuitively, our proof relies on the fact that any vector $\tilde{\mvec{x}}^k$ defined in \eqref{eq: xck} belongs to $G_\epsilon$ if the following two conditions hold simultaneously: 
\begin{enumerate}
\item[(a)] the reduced problem \eqref{prob: AREGO_subproblem} is $(\epsilon - \lambda)$-successful in the sense of \Cref{def: success_red_prob}, namely, 
    \begin{equation}\label{eq:succ-red}
	f_{min}^k \leq f^* + \epsilon-\lambda;
    \end{equation}
\item[(b)] the reduced problem \eqref{prob: AREGO_subproblem_re} is solved    (by a deterministic/stochastic algorithm) to an accuracy $\lambda\in (0,\epsilon)$ in the objective function value, namely,
\begin{equation}\label{eq:approxf}
	f(\mtx{A}^k \mvec{y}^k + \mvec{p}^{k-1}) \leq  f_{min}^k + \lambda
\end{equation}
holds (at least) with a certain probability.
\end{enumerate}
Note that in order to prove convergence of X-REGO to (global) $\epsilon$-minimizers, the value of $\epsilon$ in the success probability of the reduced problem \eqref{eq: AREGO} needs to be replaced by $(\epsilon-\lambda)$. This change is motivated by the fact that we allow inexact solutions (up to accuracy $\lambda$) of the reduced problem \eqref{prob: AREGO_subproblem_re}. We also emphasize that, according to the discussion in \Cref{sec: XREGO}, and for the sake of generality, the parameter $\mvec{p}^k$ in \eqref{prob: AREGO_subproblem} is now a random variable (in contrast with \Cref{sec:est_prob_eps_success_RPX_gen_fun} where it was assumed to be deterministic). 

Let us introduce two additional random variables that capture the conditions in (a) and (b) above,
\begin{align}
	R^k  &= \mathds{1}\{\text{\eqref{prob: AREGO_subproblem} is 
		$(\epsilon-\lambda)$-successful in the sense of \eqref{eq:succ-red}}\}, \label{eq: Rk} \\ 
	S^k  &= \mathds{1}\{\text{\eqref{prob: AREGO_subproblem} is solved to accuracy $\lambda$ in the sense of \eqref{eq:approxf}}\}, \label{eq: Sk}
\end{align}
where $\mathds{1}$ is the usual indicator function for an event. 

Let $\mathcal{F}^k = \sigma(\mtx{A}^1, \dots, \mtx{A}^k, \mvec{y}^1, \dots, \mvec{y}^k, \mvec{p}^0, \dots, \mvec{p}^k)$ be the $\sigma$-algebra generated by the random variables $\mtx{A}^1, \dots, \mtx{A}^k, \mvec{y}^1, \dots, \mvec{y}^k, \mvec{p}^0, \dots, \mvec{p}^k$ (a mathematical concept that represents the history of the  X-REGO algorithm as well as its randomness
until the $k$th embedding)\footnote{A similar setup regarding random iterates of probabilistic models can be found in \cite{Bandeira2014, Cartis2018} in the context of local optimization.}, with  $\mathcal{F}^0 = \sigma(\mvec{p}^0)$. 
We also construct an `intermediate' $\sigma$-algebra, namely,
$$\mathcal{F}^{k-1/2} = \sigma(\mtx{A}^1, \dots, \mtx{A}^{k-1}, \mtx{A}^{k}, \mvec{y}^1, \dots, \mvec{y}^{k-1}, \mvec{p}^0, \dots, \mvec{p}^{k-1}), $$
with  $\mathcal{F}^{1/2} = \sigma(\mvec{p}^0, \mtx{A}^{1})$.
Note that $\mvec{x}^k$, $R^k$ and $S^k$ are $\mathcal{F}^{k}$-measurable\footnote{It would be possible to restrict the definition of the $\sigma$-algebra $\mathcal{F}^k$
	so that it contains strictly the randomness of the embeddings $\mtx{A}^i$ and $\mvec{p}^i$ for $i\leq k$; then we would need to assume that $\mvec{y}^k$  
	is $\mathcal{F}^k$-measurable, 
	which would imply that $R^k$, $S^k$ and $\mvec{x}^k$ are also $\mathcal{F}^k$-measurable. Similar comments apply to the definition of 
	$\mathcal{F}^{k-1/2}$.}, and 
$R^k$ is also $\mathcal{F}^{k-1/2}$-measurable;
thus they are well-defined random variables.

\begin{rem}
	The random variables $\mtx{A}^1, \dots, \mtx{A}^k$, $\mvec{y}^1, \dots, \mvec{y}^k$, $\mvec{x}^1, \dots, \mvec{x}^k$, $\mvec{p}^0, \mvec{p}^1, \dots, \mvec{p}^k$, $R^1$, $\dots$, $R^k$, $S^1, \dots, S^{k}$ are  $\mathcal{F}^{k}$-measurable since $\mathcal{F}^0 \subseteq \mathcal{F}^1 \subseteq \cdots \subseteq \mathcal{F}^{k}$. Also, $\mtx{A}^1, \dots, \mtx{A}^k$, $\mvec{y}^1, \dots, \mvec{y}^{k-1}$, $\mvec{x}^1, \dots, \mvec{x}^{k-1}$, $\mvec{p}^0, \mvec{p}^1, \dots, \mvec{p}^{k-1}$, $R^1$, $\dots$, $R^k$, $S^1, \dots, S^{k-1}$ are  $\mathcal{F}^{k-1/2}$-measurable since $\mathcal{F}^0 \subseteq \mathcal{F}^{1/2} \subseteq \mathcal{F}^1 \subseteq \cdots \subseteq \mathcal{F}^{k-1} \subseteq \mathcal{F}^{k-1/2}$.
	
\end{rem}

The following assumption says that the reduced problem \eqref{prob: AREGO_subproblem} needs to be solved to required accuracy with some positive probability. Note that this is a rather weak assumption, that is satisfied by any reasonable solver. 

\begin{assumpA}{Success-Solv}\label{assump: prob_of_I_R>rho}
	For all $k \geq 1$, there exists $\rho^k \in [\rho_{lb},1]$, with $\rho_{lb} > 0$ such that\footnote{The equality in the displayed equation follows from $\mathbb{E}[S^k | \mathcal{F}^{k-1}] = 1 \cdot \prob[ S^k = 1 | \mathcal{F}^{k-1}] + 0 \cdot \prob[ S^k = 0 | \mathcal{F}^{k-1} ]$.}
	$$  \prob[ S^k = 1 | \mathcal{F}^{k-1/2}]=\mathbb{E}[S^k | \mathcal{F}^{k-1/2}]  \geq \rho^k, $$ 
	i.e., with (conditional) probability at least $\rho^k \geq \rho_{lb}$, the solution $\mvec{y}^k$ of \eqref{prob: AREGO_subproblem} 
	satisfies \eqref{eq:approxf}.\footnote{In general, $\rho^k$ will depend on the dimension $d^k$ of the $k$th random embedding.}
\end{assumpA}

\begin{rem}
	If a deterministic (global optimization) algorithm is used to solve \eqref{prob: AREGO_subproblem_re}, then $S^k$ is  always $\mathcal{F}_k^{k-1/2}$-measurable and \Cref{assump: prob_of_I_R>rho} is equivalent to $S^k\geq \rho^k>0$. Since $S^k$ is an indicator function, this further implies that $S^k\equiv 1$.
\end{rem}

The next assumption says that the drawn subspaces are $(\epsilon - \lambda)$-successful with a positive probability.

\begin{assumpA}{Succes-Emb}\label{ass:lowerbdRPK}
    For all $k \geq 1$, there exists $\tau^k \in [\tau_{lb},1]$, with $\tau_{lb} > 0$ such that	\begin{equation}\label{ineq: cond_exp>tau}
		\prob[R^k = 1 | \mathcal{F}^{k-1}] = 	\mathbb{E}[R^k | \mathcal{F}^{k-1}]  \geq \tau^k,
	\end{equation}
	i.e., with (conditional) probability at least $\tau^k \geq \tau_{lb} >0$, \eqref{prob: AREGO_subproblem} is ($\epsilon - \lambda$)-successful. 
\end{assumpA}

Note that \Cref{assump: prob_of_I_R>rho} and \Cref{ass:lowerbdRPK} have been slightly modified compared to \cite{Cartis2020b}: here, the dimension of the reduced problem is varying, so in general the probabilities of success of the solver and embedding depend on $k$ as well. Under \Cref{assump: prob_of_I_R>rho}  and \Cref{ass:lowerbdRPK}, the following result shows the convergence of X-REGO to the set of $\epsilon$-minimizers.

\begin{theorem}[Global convergence]\label{thm: glconv}
	Suppose \Cref{assump: prob_of_I_R>rho} and \ref{ass:lowerbdRPK} hold. Then,
	\begin{equation}\label{eq:lim_prob_x_opt_in_G_eps = 1}
		\lim_{k\rightarrow \infty} \prob[\mvec{x}^k_{opt} \in G_{\epsilon}]=\lim_{k\rightarrow \infty} \prob[f(\mvec{x}^k_{opt}) \leq f^* + \epsilon] = 1
	\end{equation}
	where $\mvec{x}^k_{opt}$ and $G_{\epsilon}$ are defined in \eqref{eq: xoptk} and   \eqref{eq: G_epsilon}, respectively.	Furthermore, for any $\xi \in (0,1)$, 
	\begin{equation}\label{eq:prob[x_opt^k_in_G_eps]>alpha}
		\text{$\prob[ \mvec{x}^k_{opt} \in G_{\epsilon} ]= \prob[f(\mvec{x}^k_{opt}) \leq f^* + \epsilon]\geq \xi$ for all $k \geq K_{\xi}$,}
	\end{equation}
	where $K_\xi:= \displaystyle\ceil*{\frac{|\log(1-\xi)|}{\tau_{lb} \rho_{lb}}}$.
\end{theorem}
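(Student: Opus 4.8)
The plan is to merge the two per-embedding success events into a single event and then show, by iterated conditioning, that the probability of never succeeding up to embedding $k$ decays geometrically in $k$. First I would isolate the deterministic core of the argument: if $R^k=1$ and $S^k=1$ occur simultaneously, then the point $\tilde{\mvec{x}}^k$ of \eqref{eq: xck} lies in $G_\epsilon$, since combining \eqref{eq:succ-red} and \eqref{eq:approxf} gives $f(\mvec{x}^k)\le f^k_{min}+\lambda\le f^*+\epsilon-\lambda+\lambda=f^*+\epsilon$. Because $\mvec{x}^k_{opt}$ is the best of the feasible points $\mvec{x}^1,\dots,\mvec{x}^k$, this yields the inclusion of events $\{\mvec{x}^k_{opt}\notin G_\epsilon\}\subseteq\bigcap_{j=1}^k\{R^jS^j=0\}$, so it suffices to bound $\prob\big[\bigcap_{j=1}^k\{R^jS^j=0\}\big]$.

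Next I would establish the one-step conditional bound $\mathbb{E}[R^kS^k\mid\mathcal{F}^{k-1}]\ge\tau_{lb}\rho_{lb}$, which is where the intermediate $\sigma$-algebra $\mathcal{F}^{k-1/2}$ enters. Since $R^k$ is $\mathcal{F}^{k-1/2}$-measurable, we may factor it out: $\mathbb{E}[R^kS^k\mid\mathcal{F}^{k-1/2}]=R^k\,\mathbb{E}[S^k\mid\mathcal{F}^{k-1/2}]\ge\rho_{lb}R^k$ by \Cref{assump: prob_of_I_R>rho}. Taking $\mathbb{E}[\,\cdot\mid\mathcal{F}^{k-1}]$, using $\mathcal{F}^{k-1}\subseteq\mathcal{F}^{k-1/2}$ and \Cref{ass:lowerbdRPK}, gives $\mathbb{E}[R^kS^k\mid\mathcal{F}^{k-1}]\ge\rho_{lb}\,\mathbb{E}[R^k\mid\mathcal{F}^{k-1}]\ge\tau_{lb}\rho_{lb}$, equivalently $\prob[R^kS^k=0\mid\mathcal{F}^{k-1}]\le 1-\tau_{lb}\rho_{lb}$.

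Then I would run the induction. The factor $\prod_{j=1}^{k-1}\mathds{1}\{R^jS^j=0\}$ is $\mathcal{F}^{k-1}$-measurable (each $R^j,S^j$ is $\mathcal{F}^j$-measurable and $\mathcal{F}^j\subseteq\mathcal{F}^{k-1}$ for $j\le k-1$), so by the tower property and the one-step bound, $\prob\big[\bigcap_{j=1}^k\{R^jS^j=0\}\big]\le(1-\tau_{lb}\rho_{lb})\,\prob\big[\bigcap_{j=1}^{k-1}\{R^jS^j=0\}\big]$; iterating down to $k=1$ yields $\prob[\mvec{x}^k_{opt}\notin G_\epsilon]\le(1-\tau_{lb}\rho_{lb})^k$. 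Since $\tau_{lb}\rho_{lb}\in(0,1]$, the right-hand side tends to $0$, which gives the limit statement of the theorem. For the quantitative part, $\prob[\mvec{x}^k_{opt}\in G_\epsilon]\ge\xi$ holds as soon as $(1-\tau_{lb}\rho_{lb})^k\le 1-\xi$; taking logarithms and using $-\log(1-t)\ge t$ for $t\in(0,1)$, this is guaranteed once $k\,\tau_{lb}\rho_{lb}\ge|\log(1-\xi)|$, i.e. for all $k\ge\ceil*{|\log(1-\xi)|/(\tau_{lb}\rho_{lb})}=K_\xi$.

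The only delicate point I anticipate is the measurability bookkeeping in the middle step: one must condition on $\mathcal{F}^{k-1/2}$ to factor out $R^k$ from the conditional expectation of $S^k$, and only then drop down to $\mathcal{F}^{k-1}$ to apply the embedding-success assumption; once this ordering is respected, the remainder is a routine geometric-decay estimate.
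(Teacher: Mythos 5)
Your proof is correct and follows essentially the same route as the paper's: the same deterministic inclusion $\{R^k=1\}\cap\{S^k=1\}\subseteq\{\mvec{x}^k\in G_\epsilon\}$, the same two-stage conditioning (factor $R^k$ out at $\mathcal{F}^{k-1/2}$, then tower down to $\mathcal{F}^{k-1}$), and the same geometric-decay and $|\log(1-t)|\ge t$ estimates for $K_\xi$. The only difference is cosmetic: you bound the intersection of failure events directly, whereas the paper bounds the union of success events via $\mathbb{E}\bigl[1-\prod_k(1-R^kS^k)\bigr]$, and you insert the uniform bounds $\tau_{lb},\rho_{lb}$ at each step rather than carrying $\tau^k,\rho^k$ to the end.
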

\begin{proof}
The proof is a straightforward extension of the one given in \cite{Cartis2020b}, and for completeness, we include it  in \Cref{sec:app_proof_thm65}.
\end{proof}

\begin{rem}
	If the original problem \eqref{eq: GO} is convex (and known a priori to be so), then clearly, a local (deterministic or stochastic) optimization algorithm may be used to solve \eqref{prob: AREGO_subproblem_re}
	and achieve \eqref{eq:approxf}. Apart from this important speed-up and simplification, it seems difficult at present to see how else problem convexity could be exploited in order to improve the success bounds and convergence of X-REGO.
\end{rem}

\subsection{Global convergence of X-REGO for general objectives}
\label{sec:convergencegenf}  

The previous section provides a convergence result, with associate convergence rate, that depends on some parameters $\rho_{lb}$ and $\tau_{lb}$  defined in \Cref{assump: prob_of_I_R>rho} and \ref{ass:lowerbdRPK}. The former intrinsically depends on the solver used to solve the reduced subproblems, and will not be discussed further here. However, the latter parameter $\tau_{lb}$ can be estimated for general Lipschitz-continuous objectives  using the results derived in  \Cref{sec:est_prob_eps_success_RPX_gen_fun}.

\begin{corollary}\label{cor:glconv_gen_fun}
	Suppose that \Cref{ass:f_is_Lipschitz} holds, that there exists a global minimizer $\mvec{x}^*$ of \eqref{eq: GO} that satisfies \Cref{ass:B_eps_is_a_ball} (replacing $\epsilon$ by $\epsilon-\lambda$ in \Cref{ass:B_eps_is_a_ball}, i.e., slightly relaxing the assumption), and that $\tilde{\mvec{p}}^k$ satisfies $\rVert \tilde{\mvec{p}}^k - \mvec{x}^*\rVert \leq R_{\max}$ for all $k$ and for some suitably chosen $R_{\max}$. Suppose also that $d^k \geq d_{lb}$ for some $d_{lb}>0$.  Then, \Cref{ass:lowerbdRPK} holds with 
	\[\tau_{lb} = \tau(r_{\min},d_{lb},D),\]
	with $r_{\min} = (\epsilon-\lambda)/(L R_{\max})$.
\end{corollary}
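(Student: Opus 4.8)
The plan is to verify \Cref{ass:lowerbdRPK} separately for each $k$ by conditioning on the history $\mathcal{F}^{k-1}$ and then invoking \Cref{thm:RPX_eps_succ_gen_uniform_bound}, everywhere with $\epsilon$ replaced by $\epsilon-\lambda$. Fix $k\geq 1$. By construction of X-REGO (line~\ref{line:drawA} of \Cref{alg:XREGO}), the Gaussian matrix $\mtx{A}^k$ is drawn independently of $\mathcal{F}^{k-1}$, so conditionally on $\mathcal{F}^{k-1}$ it is still a $D\times d^k$ Gaussian matrix, while $\tilde{\mvec{p}}^{k-1}$ becomes a fixed vector of $\mathcal{X}$ which, by hypothesis, satisfies $\|\tilde{\mvec{p}}^{k-1}-\mvec{x}^*\|\leq R_{\max}$. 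Since, by \eqref{eq: Rk}, $R^k$ is the indicator that \eqref{prob: AREGO_subproblem} is $(\epsilon-\lambda)$-successful, we have $\mathbb{E}[R^k|\mathcal{F}^{k-1}]=\prob[\eqref{prob: AREGO_subproblem}\text{ is }(\epsilon-\lambda)\text{-successful}|\mathcal{F}^{k-1}]$, and the conditional probability on the right is exactly the quantity that \Cref{thm:RPX_eps_succ_gen_uniform_bound} bounds (there, unconditionally and for a deterministic $\mvec{p}$).

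I would then apply \Cref{thm:RPX_eps_succ_gen_uniform_bound}, which is legitimate because the relaxed \Cref{ass:B_eps_is_a_ball} (with $\epsilon-\lambda$) guarantees the existence of the ball $B_{(\epsilon-\lambda)/L}(\mvec{x}^*)$ of $(\epsilon-\lambda)$-minimizers, and because the theorem's bound is valid for every admissible deterministic $\mvec{p}$, hence for every realization of $\tilde{\mvec{p}}^{k-1}$ allowed by $\|\tilde{\mvec{p}}^{k-1}-\mvec{x}^*\|\leq R_{\max}$. This yields, almost surely, $\mathbb{E}[R^k|\mathcal{F}^{k-1}]\geq\tau(r_{\min},d^k,D)$ with $r_{\min}=(\epsilon-\lambda)/(L R_{\max})$. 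It remains to remove the dependence on the varying dimension $d^k$. Rather than proving monotonicity of the unwieldy formula \eqref{def:tau_p} in $d$, I would reuse the chain of inequalities in the proof of \Cref{thm:RPX_eps_succ_gen_uniform_bound}: via \Cref{cor:RPX_eps_succ>QL_cap_Circ} and \Cref{lemma:circ(alpha)<circ(beta)_if_alpha<beta}, the conditional success probability is at least $\prob[\mtx{Q}\mathcal{L}_{d^k}\cap C_{\min}\neq\{\mvec{0}\}]$ with $C_{\min}=\ccone_D(\arcsin r_{\min})$; fixing a $d_{lb}$-dimensional subspace $\mathcal{L}_{d_{lb}}\subseteq\mathcal{L}_{d^k}$ gives the event inclusion $\{\mtx{Q}\mathcal{L}_{d_{lb}}\cap C_{\min}\neq\{\mvec{0}\}\}\subseteq\{\mtx{Q}\mathcal{L}_{d^k}\cap C_{\min}\neq\{\mvec{0}\}\}$, valid for every fixed orthogonal $\mtx{Q}$; and then the Crofton formula \eqref{eq:Crofton_formula} applied to $\mathcal{L}_{d_{lb}}$, together with nonnegativity of intrinsic volumes as in \eqref{eq:RPX_eps_succ>2v(C_min)}, gives $\prob[\mtx{Q}\mathcal{L}_{d_{lb}}\cap C_{\min}\neq\{\mvec{0}\}]\geq 2v_{D-d_{lb}+1}(C_{\min})=\tau(r_{\min},d_{lb},D)$. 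Hence $\mathbb{E}[R^k|\mathcal{F}^{k-1}]\geq\tau(r_{\min},d_{lb},D)=:\tau_{lb}$ for all $k$, with $\tau_{lb}>0$ since \eqref{def:tau_p} is strictly positive for $0<r_{\min}<1$ and $1\leq d_{lb}<D$; thus \Cref{ass:lowerbdRPK} holds with $\tau^k=\tau(r_{\min},d^k,D)$ and this $\tau_{lb}$.

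The main point requiring care (rather than a genuine obstacle) is the conditioning step: one must argue that, because $\mtx{A}^k$ is independent of $\mathcal{F}^{k-1}$ and the deterministic bound of \Cref{thm:RPX_eps_succ_gen_uniform_bound} holds for \emph{every} $\mvec{p}$ satisfying $\|\mvec{p}-\mvec{x}^*\|\leq R_{\max}$, that bound passes verbatim to $\mathbb{E}[R^k|\mathcal{F}^{k-1}]$ almost surely. A minor edge case worth a remark is $R_{\max}\leq(\epsilon-\lambda)/L$: then every $\tilde{\mvec{p}}^k$ lies in $B_{(\epsilon-\lambda)/L}(\mvec{x}^*)\subseteq G_{\epsilon-\lambda}$, so \eqref{prob: AREGO_subproblem} is $(\epsilon-\lambda)$-successful with probability one ($\mvec{y}=\mvec{0}$ is feasible) and the conclusion is immediate; otherwise $r_{\min}\in(0,1)$ and $\tau(r_{\min},d_{lb},D)$ is well defined.
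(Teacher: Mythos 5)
Your proof is correct and reaches the paper's bound by the same core mechanism (reduce, via \Cref{cor:RPX_eps_succ>QL_cap_Circ} and \Cref{lemma:circ(alpha)<circ(beta)_if_alpha<beta}, to the probability that a uniformly random $d^k$-dimensional subspace meets the fixed cone $C_{\min}=\ccone_D(\arcsin r_{\min})$, then invoke the Crofton formula and nonnegativity of the intrinsic volumes), but it diverges at the one step where the varying dimension $d^k$ must be replaced by $d_{lb}$. The paper gets from $2h_{D-d^k+1}$ to $2h_{D-d_{lb}+1}$ by citing the monotonicity $h_k \geq h_{k+1}$ of the half-tail functionals (Proposition~5.9 of the Lotz et al.\ reference). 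You instead fix nested subspaces $\mathcal{L}_{d_{lb}} \subseteq \mathcal{L}_{d^k}$ and use the pathwise event inclusion $\{\mtx{Q}\mathcal{L}_{d_{lb}} \cap C_{\min} \neq \{\mvec{0}\}\} \subseteq \{\mtx{Q}\mathcal{L}_{d^k} \cap C_{\min} \neq \{\mvec{0}\}\}$, applying Crofton only in dimension $d_{lb}$; this is a fully elementary substitute that avoids the external monotonicity result and is legitimate because the law of $\mtx{Q}\mathcal{L}_{d_{lb}}$ does not depend on which $d_{lb}$-dimensional subspace of $\mathcal{L}_{d^k}$ is chosen. Two further points in your write-up are genuine improvements over the paper's exposition: you justify explicitly why the deterministic-$\mvec{p}$ bound of \Cref{thm:RPX_eps_succ_gen_uniform_bound} transfers to the conditional expectation $\mathbb{E}[R^k \mid \mathcal{F}^{k-1}]$ (independence of $\mtx{A}^k$ from the history, plus uniformity of the bound over all admissible realizations of $\tilde{\mvec{p}}^{k-1}$), whereas the paper only asserts the conditional inequality in its last line; and you flag the degenerate case $R_{\max} \leq (\epsilon-\lambda)/L$, in which $r_{\min} \geq 1$ so the formula \eqref{def:tau_p} is not applicable but the $(\epsilon-\lambda)$-success probability is trivially one because $\tilde{\mvec{p}}^k \in G_{\epsilon-\lambda}$ --- a case the paper leaves implicit.
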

 
\begin{proof}
Let us first recall that for all $k$, there holds by \Cref{cor:RPX_eps_succ>QL_cap_Circ}:
\[ \prob[\text{\eqref{prob: AREGO_subproblem} is $(\epsilon-\lambda)$-successful}] \geq \prob[\mtx{Q} \mathcal{L}_{d^k} \cap \ccone_D(\alpha^*_{\tilde{\mvec{p}}^{k-1}}) \neq \{\mvec{0}\}], \]
where $\mtx{Q}$ is a $D \times D$ random orthogonal matrix drawn uniformly
from the set of all $D \times D$ real orthogonal matrices, $\mathcal{L}_{d^k}$ a $d^k$-dimensional linear subspace, and $\alpha_{\tilde{\mvec{p}}^{k-1}}^* :=\arcsin((\epsilon-\lambda)/\|\mvec{x}^*-\tilde{\mvec{p}}^{k-1}\|)$. Let $\alpha^*_{\min} := \arcsin((\epsilon-\lambda)/(L R_{\max}))$, and note that $\alpha^*_{\min} \leq \alpha^*_{\tilde{\mvec{p}}^{k-1}}$ for all $k$. By \Cref{lemma:circ(alpha)<circ(beta)_if_alpha<beta}, for any $\alpha^*_{\min} \leq \alpha \leq \pi/2$, there holds $\ccone_D(\alpha^*_{\min}) \subseteq \ccone_D(\alpha)$ so that
\[ \prob[\mtx{Q} \mathcal{L}_{d^k} \cap \ccone_D(\alpha^*_{\tilde{\mvec{p}}^{k-1}}) \neq \{\mvec{0}\}] \geq \prob[\mtx{Q} \mathcal{L}_{d^k} \cap \ccone_D(\alpha^*_{\min}) \neq \{\mvec{0}\}] \]
for all $k$.
By the Crofton formula, there holds 
\[ \prob[\mtx{Q} \mathcal{L}_{d^k} \cap \ccone_D(\alpha^*_{\min}) \neq \{\mvec{0}\}] = 2h_{D-d^k+1}.\]
By \cite[Prop. 5.9]{Lotz2013}, $h_k \geq h_{k+1}$ for all $k = 0, \dots, D-1$. We deduce that
\[  \prob[\mtx{Q} \mathcal{L}_{d^k} \cap \ccone_D(\alpha^*_{\min}) \neq \{\mvec{0}\}] = 2h_{D-d^k+1} \geq 2h_{D-d_{lb}+1}.\]
Using the fact that the intrinsic volumes are all non-negative, and the definition of $h_k$, we get: 
\[  \prob[\mtx{Q} \mathcal{L}_{d^k} \cap \ccone_D(\alpha^*_{\min}) \neq \{\mvec{0}\}] \geq 2 v_{D-d_{lb}+1} = \tau(r_{\min},d_{lb},D).\]
Note finally that, in terms of conditional expectation, we can write $\mathbb{E}[R^k | \mathcal{F}^{k-1}] = 1 \cdot \prob[ R^k = 1 | \mathcal{F}^{k-1}] + 0 \cdot \prob[ R^k = 0 | \mathcal{F}^{k-1} ] \geq \tau_{lb}$. This shows that \eqref{ineq: cond_exp>tau} in \Cref{ass:lowerbdRPK} holds. 
\end{proof}

We now estimate the rate of convergence of X-REGO for Lipschitz continuous functions using the estimates for $\tau$ provided in \Cref{cor:t(r)_asymp}.

\begin{theorem}\label{thm:conv_rate_gen_f}
	Suppose that Assumptions \ref{ass:f_is_Lipschitz} and \ref{assump: prob_of_I_R>rho} hold, that there exists a global minimizer $\mvec{x}^*$ of \eqref{eq: GO} that satisfies \Cref{ass:B_eps_is_a_ball} (replacing $\epsilon$ by $\epsilon-\lambda$ in \Cref{ass:B_eps_is_a_ball}), and that $\tilde{\mvec{p}}^k$ satisfies $\rVert \tilde{\mvec{p}}^k - \mvec{x}^*\rVert \leq R_{\max}$ for all $k$ and for some suitably chosen $R_{\max}$. Suppose also that $d^k \geq d_{lb}$ for some $d_{lb}>0$. Then, $\mvec{x}_{opt}^k$ defined in \eqref{eq: xoptk} converges to the set of $\epsilon$-minimizers $G_{\epsilon}$ almost surely as $k \to \infty$, and 
	\[\text{$\prob[ \mvec{x}^k_{opt} \in G_{\epsilon} ]= \prob[f(\mvec{x}^k_{opt}) \leq f^* + \epsilon]\geq \xi$ for all $k \geq K_{\xi}$,}\]
	with
	\begin{equation}
		\text{$K_\xi = \frac{\left|\log(1-\xi)\right|}{\rho_{lb}} O\left(D^{\frac{2-d_{lb}}{2}} \left( \frac{L R_{\max}}{\epsilon-\lambda} \right)^{D-d_{lb}}\right)$ as $D \rightarrow \infty$.}
	\end{equation}
\end{theorem}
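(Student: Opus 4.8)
The plan is to chain together three results already established in the paper. First I would invoke \Cref{cor:glconv_gen_fun} to verify \Cref{ass:lowerbdRPK} for the Lipschitz objective and pin down the constant $\tau_{lb}$; then feed this, together with the assumed \Cref{assump: prob_of_I_R>rho}, into \Cref{thm: glconv} to obtain almost sure convergence of $\mvec{x}_{opt}^k$ to $G_\epsilon$ and the explicit threshold $K_\xi = \ceil*{|\log(1-\xi)|/(\tau_{lb}\rho_{lb})}$; and finally I would replace $\tau_{lb}$ by its asymptotic order from \Cref{thm:tau(r)_asymptotic} (equivalently \Cref{cor:t(r)_asymp}) to produce the stated $O(\cdot)$ bound on $K_\xi$.

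Concretely: under \Cref{ass:f_is_Lipschitz}, the existence of a global minimizer $\mvec{x}^*$ satisfying \Cref{ass:B_eps_is_a_ball} with $\epsilon$ replaced by $\epsilon-\lambda$, the boundedness $\rVert \tilde{\mvec{p}}^k-\mvec{x}^* \rVert \leq R_{\max}$ for all $k$, and $d^k \geq d_{lb}$, \Cref{cor:glconv_gen_fun} gives \Cref{ass:lowerbdRPK} with $\tau_{lb} = \tau(r_{\min},d_{lb},D)$ and $r_{\min} = (\epsilon-\lambda)/(L R_{\max})$. (Here ``suitably chosen $R_{\max}$'' should be read as also requiring $R_{\max} > (\epsilon-\lambda)/L$, so that $r_{\min}\in(0,1)$, the formula \eqref{def:tau_p} applies, and $\tau_{lb}>0$.) With \Cref{assump: prob_of_I_R>rho} in force, \Cref{thm: glconv} applies verbatim, yielding the almost sure convergence and $\prob[\mvec{x}_{opt}^k \in G_\epsilon] \geq \xi$ for all $k \geq \ceil*{|\log(1-\xi)|/(\tau_{lb}\rho_{lb})}$.

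It then remains to insert the asymptotics. Applying \Cref{thm:tau(r)_asymptotic} with the fixed value $r = r_{\min} = (\epsilon-\lambda)/(L R_{\max})$ and $d = d_{lb}$ gives
\[ \tau(r_{\min},d_{lb},D) = \Theta\!\left( D^{\frac{d_{lb}-2}{2}} \left( \frac{\epsilon-\lambda}{L R_{\max}} \right)^{D-d_{lb}} \right) \quad \text{as } D \to \infty, \]
so $1/\tau_{lb} = \Theta\!\left( D^{\frac{2-d_{lb}}{2}} \left( L R_{\max}/(\epsilon-\lambda) \right)^{D-d_{lb}} \right)$. Since the ceiling changes its argument by less than $1$ and $1/\tau_{lb}\to\infty$, the ceiling can be absorbed into the $O(\cdot)$, which gives
\[ K_\xi = \frac{|\log(1-\xi)|}{\rho_{lb}}\, O\!\left( D^{\frac{2-d_{lb}}{2}} \left( \frac{L R_{\max}}{\epsilon-\lambda} \right)^{D-d_{lb}} \right) \quad \text{as } D \to \infty, \]
as claimed.

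Since the argument is essentially an assembly of earlier statements, I do not expect a genuine obstacle; the delicate points are minor. They are: (i) confirming that \Cref{cor:glconv_gen_fun} delivers a strictly positive $\tau_{lb}$, i.e.\ that $r_{\min}\in(0,1)$ so that $\tau(r_{\min},d_{lb},D)$ is given by \eqref{def:tau_p}; (ii) justifying the ``almost surely'' wording, which is not literally \eqref{eq:lim_prob_x_opt_in_G_eps = 1} but follows from it because $k\mapsto f(\mvec{x}_{opt}^k)$ is non-increasing, so the events $\{\mvec{x}_{opt}^k\in G_\epsilon\}$ are nested increasing in $k$ and $\prob[\bigcup_k\{\mvec{x}_{opt}^k\in G_\epsilon\}] = \lim_k \prob[\mvec{x}_{opt}^k\in G_\epsilon] = 1$; and (iii) the routine absorption of the ceiling into the $O$-estimate. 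If anything is fiddly it is keeping track that $\epsilon$ must be replaced by $\epsilon-\lambda$ consistently when passing from the $(\epsilon-\lambda)$-success of the reduced problem to the $\epsilon$-minimizer conclusion for \eqref{eq: GO}.
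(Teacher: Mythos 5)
Your proposal is correct and follows exactly the paper's route: the paper proves this theorem by citing \Cref{thm: glconv}, \Cref{cor:glconv_gen_fun} and \Cref{cor:t(r)_asymp} in precisely the chain you describe. The extra care you take over $r_{\min}\in(0,1)$, the ceiling, and the almost-sure wording is sensible but goes beyond what the paper spells out.
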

\begin{proof}
	The result follows from \Cref{thm: glconv}, \Cref{cor:glconv_gen_fun} and \Cref{cor:t(r)_asymp}.
\end{proof}

\subsection{Ensuring boundedness of $\tilde{\mvec{p}}^k$}
So far, our convergence results rely on the assumption that, for each $k$, $\rVert \tilde{\mvec{p}}^k-\mvec{x}^*\rVert \leq R_{\max}$ for some suitably chosen $R_{\max}$ and for some global minimizer $\mvec{x}^*$ surrounded by a ball of radius $(\epsilon-\lambda)$ of feasible solutions, see \Cref{ass:B_eps_is_a_ball}. We show in this section that the following strategies for choosing the random variable $\mvec{p}^k$ guarantee that $\mvec{x}_{opt}^k$ defined in \eqref{eq: xoptk} converges to the set of $\epsilon$-minimizers $G_{\epsilon}$ almost surely as $k \to \infty$.

\begin{enumerate}
    \item $\mvec{p}^k$ is deterministic and does not vary with $k$ (e.g., $\mvec{p}^k = \mvec{0}$ for all $k$). \label{case:pfixed}
    \item $(\mvec{p}^k)_{k = 1, 2, \dots}$ is a bounded sequence of deterministic values. \label{case:pseq}
    \item $\mvec{p}^k$ is any random variable with support contained in $\mathcal{X}$, and $\mathcal{X}$ is bounded. \label{case:pbounded}
     \item $\mvec{p}^{k}$ is a random variable defined as $\mvec{p}^{k} = \mvec{x}^k_{opt}$, where $\mvec{x}_{opt}^k$ is the best point found over the $k$ first embeddings, see \eqref{eq: xoptk}, and the objective is coercive. \label{case:pbest}
\end{enumerate}
       
Note that for the strategies \ref{case:pfixed}, \ref{case:pseq} and \ref{case:pbounded}, the validity of \Cref{thm:conv_rate_gen_f} follows simply from the triangular inequality: 
\[ \| \tilde{\mvec{p}}^k - \mvec{x}^* \| \leq \| \tilde{\mvec{p}}^k \| + \| \mvec{x}^* \| < \infty, \]
and the fact that $ \| \tilde{\mvec{p}}^k \|$ is bounded. We prove next that $\mvec{x}_{opt}^k$ defined in \eqref{eq: xoptk} converges to the set of $\epsilon$-minimizers $G_{\epsilon}$ almost surely as $k \to \infty$ for strategy \ref{case:pbest} if the objective is coercive. 

\begin{assump}[Coerciveness, see \cite{beck2014}] \label{ass:coerciveness}
    When $\mathcal{X}$ is unbounded, the (continuous) function $f : \mathcal{X} \to \mathbb{R}$ in \eqref{eq: GO} satisfies
    \begin{equation}
        \lim_{\| \mvec{x} \| \to \infty} f(\mvec{x}) = \infty.
    \end{equation}
\end{assump}

\begin{corollary} \label{cor:coerciveness_implies_bounded_p}
    Let \Cref{ass:coerciveness} hold, and let $\mvec{x}^*$ be a global minimizer of \eqref{eq: GO}. Let $\mvec{p}^k = \mvec{x}_{opt}^k$ for $k \geq 1$, with $\mvec{x}_{opt}^k$ defined in \eqref{eq: xoptk}, and let $\mvec{p}^0 \in \mathcal{X}$ be such that $f(\tilde{\mvec{p}}^0)<\infty$. Then, there exists $R_{\max}$ such that, for all $k$,
    \begin{equation}\label{eq:bounded_dist_corol}
        \| \tilde{\mvec{p}}^k - \mvec{x}^* \| \leq R_{\max}.
    \end{equation}
\end{corollary}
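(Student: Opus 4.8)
The plan is to use the fact that, with the choice $\mvec{p}^k=\mvec{x}_{opt}^k$, the parameter sequence tracks the best iterate found so far, whose objective value is non-increasing in $k$; coerciveness then confines the whole sequence to a bounded sublevel set of $f$, after which the triangle inequality finishes the proof.

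First I would record the monotonicity of the best value: by definition \eqref{eq: xoptk}, $f(\tilde{\mvec{x}}_{opt}^{k})=\min\{f(\tilde{\mvec{x}}^1),\dots,f(\tilde{\mvec{x}}^k)\}$, so $f(\tilde{\mvec{x}}_{opt}^{k+1})\le f(\tilde{\mvec{x}}_{opt}^{k})$ for all $k\ge 1$. Next, since $\mvec{y}=\mvec{0}$ is always feasible for \eqref{prob: AREGO_subproblem_re} and yields the value $f(\tilde{\mvec{p}}^{k-1})$, a (descent-type) solver returns $\tilde{\mvec{y}}^k$ with $f(\tilde{\mvec{x}}^k)=f(\tilde{\mtx{A}}^k\tilde{\mvec{y}}^k+\tilde{\mvec{p}}^{k-1})\le f(\tilde{\mvec{p}}^{k-1})$, and in particular $f(\tilde{\mvec{x}}^1)\le f(\tilde{\mvec{p}}^0)$. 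Combining the two, $f(\tilde{\mvec{p}}^k)=f(\tilde{\mvec{x}}_{opt}^k)\le f(\tilde{\mvec{x}}^1)\le f(\tilde{\mvec{p}}^0)=:M<\infty$ for every $k\ge 1$, and $f(\tilde{\mvec{p}}^0)\le M$ trivially, so each $\tilde{\mvec{p}}^k$ lies in the sublevel set $\mathcal{S}_M:=\{\mvec{x}\in\mathcal{X}:f(\mvec{x})\le M\}$. (If one prefers not to invoke solver monotonicity, the same argument goes through realization by realization with $M$ replaced by $\max\{f(\tilde{\mvec{p}}^0),f(\tilde{\mvec{x}}^1)\}<\infty$, which is enough for the almost-sure statements.)

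Then I would show $\mathcal{S}_M$ is bounded. If $\mathcal{X}$ is bounded this is immediate; if $\mathcal{X}$ is unbounded, suppose for contradiction that $\mathcal{S}_M$ is unbounded and pick $\mvec{x}_n\in\mathcal{S}_M$ with $\|\mvec{x}_n\|\to\infty$: then \Cref{ass:coerciveness} forces $f(\mvec{x}_n)\to\infty$, contradicting $f(\mvec{x}_n)\le M$. Hence there is $\bar R<\infty$ with $\|\mvec{x}\|\le\bar R$ for all $\mvec{x}\in\mathcal{S}_M$. Since $\mvec{x}^*$ is a global minimizer, $f(\mvec{x}^*)=f^*\le f(\tilde{\mvec{p}}^0)=M$, so $\mvec{x}^*\in\mathcal{S}_M$ too, and therefore, for every $k$,
\[ \|\tilde{\mvec{p}}^k-\mvec{x}^*\|\le\|\tilde{\mvec{p}}^k\|+\|\mvec{x}^*\|\le 2\bar R=:R_{\max}, \]
which is exactly \eqref{eq:bounded_dist_corol}.

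The only delicate point — the main obstacle — is justifying that the objective values along the run stay bounded, i.e.\ that the solver does not move uphill relative to $\mvec{y}=\mvec{0}$; this is the natural monotone/descent property of the subproblem solver, and in its absence one obtains only a realization-dependent $R_{\max}$, which still suffices for the almost-sure convergence invoked in \Cref{thm:conv_rate_gen_f}. The remaining ingredient, coerciveness implying bounded sublevel sets, is standard and handled by the short contradiction argument above.
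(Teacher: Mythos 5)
Your proof is correct and follows essentially the same route as the paper: monotonicity of $f(\tilde{\mvec{p}}^k)$ (which the paper also justifies via feasibility of $\mvec{y}=\mvec{0}$, in a footnote to the A-REGO description), boundedness of the relevant sublevel set via coerciveness, and the triangle inequality. Your extra care about the step $f(\tilde{\mvec{x}}^1)\le f(\tilde{\mvec{p}}^0)$ and the realization-dependent fallback is a welcome clarification of a point the paper states only tersely, but it is not a different argument.
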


\begin{proof}
Note that the sequence $(f(\tilde{\mvec{p}}^k))_{k = 0, 1, 2, \dots}$ is decreasing by definition of the random variable $\mvec{x}_{opt}^k$. Therefore, for all $k$ there holds 
\[ f(\tilde{\mvec{p}}^k) \leq f(\tilde{\mvec{p}}^0) < \infty. \]
By coerciveness of $f$, there exists $R < \infty$ such that for any deterministic vector $\mvec{y} \in \mathcal{X}$, $\| \mvec{y} \| > R$ implies $f(\mvec{y}) > f(\tilde{\mvec{p}}^0)$. We deduce that $\| \tilde{\mvec{p}}^k \| < R$ for all $k$, so that $\| \tilde{\mvec{p}}^k - \mvec{x}^*\| \leq \| \tilde{\mvec{p}}^k \| + \| \mvec{x}^* \| \leq R + \| \mvec{x}^*\|$. The result follows by writing $R_{\max} = R + \| \mvec{x}^* \|$.
\end{proof}

\begin{corollary}
	Suppose that Assumptions \ref{ass:f_is_Lipschitz}, \ref{assump: prob_of_I_R>rho} and \ref{ass:coerciveness} hold, that there exists a global minimizer $\mvec{x}^*$ of \eqref{eq: GO} that satisfies \Cref{ass:B_eps_is_a_ball} (replacing $\epsilon$ by $\epsilon-\lambda$ in \Cref{ass:B_eps_is_a_ball}), and that $d^k \geq d_{lb}$ for some $d_{lb}>0$.  Let $\mvec{p}^k = \mvec{x}_{opt}^k$ for $k \geq 1$, with $\mvec{x}_{opt}^k$ defined in \eqref{eq: xoptk}, and let $\mvec{p}^0 \in \mathcal{X}$ be such that $f(\tilde{\mvec{p}}^0)<\infty$. Then, $\mvec{x}_{opt}^k$ converges to the set of $\epsilon$-minimizers $G_{\epsilon}$ almost surely as $k \to \infty$, and  there exists $R_{\max}$ such that
	\[\text{$\prob[ \mvec{x}^k_{opt} \in G_{\epsilon} ]= \prob[f(\mvec{x}^k_{opt}) \leq f^* + \epsilon]\geq \xi$ for all $k \geq K_{\xi}$,}\]
	with
	\begin{equation}
		\text{$K_\xi = \frac{\left|\log(1-\xi)\right|}{\rho_{lb}} O\left(D^{\frac{2-d_{lb}}{2}} \left( \frac{L R_{\max}}{\epsilon-\lambda} \right)^{D-d_{lb}}\right)$ as $D \rightarrow \infty$.}
	\end{equation}
\end{corollary}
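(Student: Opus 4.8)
The plan is to derive this corollary by assembling two results already in hand: \Cref{cor:coerciveness_implies_bounded_p}, which turns the coerciveness hypothesis together with the choice $\mvec{p}^k = \mvec{x}_{opt}^k$ into the boundedness condition $\|\tilde{\mvec{p}}^k - \mvec{x}^*\| \leq R_{\max}$, and \Cref{thm:conv_rate_gen_f}, which delivers exactly the claimed almost-sure convergence and convergence rate \emph{under that boundedness condition}. So the proof is, in effect, a matching-up of hypotheses, with essentially no new estimates required.

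First I would apply \Cref{cor:coerciveness_implies_bounded_p}. Its hypotheses hold here: \Cref{ass:coerciveness} is assumed, $\mvec{x}^*$ is a global minimizer of \eqref{eq: GO}, $\mvec{p}^k = \mvec{x}_{opt}^k$ for $k \geq 1$ with $\mvec{x}_{opt}^k$ as in \eqref{eq: xoptk}, and $\mvec{p}^0 \in \mathcal{X}$ satisfies $f(\tilde{\mvec{p}}^0) < \infty$. It therefore produces a finite $R_{\max}$ (explicitly $R_{\max} = R + \|\mvec{x}^*\|$, where $R$ bounds the sublevel set $\{\mvec{y} \in \mathcal{X} : f(\mvec{y}) \leq f(\tilde{\mvec{p}}^0)\}$) with $\|\tilde{\mvec{p}}^k - \mvec{x}^*\| \leq R_{\max}$ for all $k$ along every realization of the algorithm. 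The point worth stressing is that this bound is \emph{pathwise}: it holds for every $\omega$, not merely in expectation or with high probability, which is precisely what is needed for it to serve as a deterministic input to the earlier machinery --- in particular so that the conditional success probability of the embedding is uniformly bounded below by $\tau(r_{\min}, d_{lb}, D)$, as in \Cref{cor:glconv_gen_fun}, for the genuinely random (data-dependent) choice $\mvec{p}^{k-1} = \mvec{x}_{opt}^{k-1}$. Measurability is also unaffected, since $\mvec{x}_{opt}^k$ is $\mathcal{F}^k$-measurable.

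Then I would invoke \Cref{thm:conv_rate_gen_f} with this $R_{\max}$. Its remaining hypotheses are all assumed here: \Cref{ass:f_is_Lipschitz}, \Cref{assump: prob_of_I_R>rho}, the existence of a global minimizer $\mvec{x}^*$ satisfying \Cref{ass:B_eps_is_a_ball} with $\epsilon$ replaced by $\epsilon - \lambda$, and $d^k \geq d_{lb}$; and $\|\tilde{\mvec{p}}^k - \mvec{x}^*\| \leq R_{\max}$ for all $k$ has just been established. Hence \Cref{thm:conv_rate_gen_f} applies verbatim, giving that $\mvec{x}_{opt}^k$ converges to $G_\epsilon$ almost surely and that $\prob[\mvec{x}_{opt}^k \in G_\epsilon] = \prob[f(\mvec{x}_{opt}^k) \leq f^* + \epsilon] \geq \xi$ for all $k \geq K_\xi$ with $K_\xi$ of the stated form (there, \Cref{cor:t(r)_asymp} is used to pass from $\tau(r_{\min}, d_{lb}, D)$ to its $D \to \infty$ asymptotics). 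The only genuine difficulty in the whole argument --- ruling out $(\tilde{\mvec{p}}^k)_k$ escaping to infinity when $\mathcal{X}$ is unbounded and $\mvec{p}^k$ is chosen adaptively --- is the one already resolved inside \Cref{cor:coerciveness_implies_bounded_p} via the monotonicity of $(f(\tilde{\mvec{p}}^k))_k$ and coerciveness; nothing further is needed here.
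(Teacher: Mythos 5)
Your proposal is correct and follows exactly the paper's own (one-line) proof: combine \Cref{cor:coerciveness_implies_bounded_p} to obtain the pathwise bound $\|\tilde{\mvec{p}}^k - \mvec{x}^*\| \leq R_{\max}$, then apply \Cref{thm:conv_rate_gen_f}. Your added remarks on the pathwise nature of the bound and measurability are accurate elaborations rather than a different route.
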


\begin{proof}
	The result follows from \Cref{thm:conv_rate_gen_f} and \Cref{cor:coerciveness_implies_bounded_p}.
\end{proof}

\section{Applying X-REGO to functions with low effective dimensionality}
\label{sec: loweffdim}

The recent works \cite{Cartis2020, Cartis2020b} explore random embedding algorithms for functions with low effective dimension, that only vary over a subspace of dimension $d_e<D$, and address respectively the case $\mathcal{X} = \mathbb{R}^D$ and $\mathcal{X} = [-1,1]^D$. Both papers assume that the dimension of the random subspace $d$ in \eqref{eq: AREGO} is the same or exceeds  the effective dimension $d_e$, and derive bounds on the probability of \eqref{eq: AREGO} to be $\epsilon$-successful in that setting; these bounds are then used to prove convergence of respective random subspace methods. For the remainder of this paper, we explore the use of X-REGO for unconstrained global optimization of functions with low effective dimension, for any random subspace dimension $d$, thus removing the assumption  $d \geq d_e$. To prove convergence of X-REGO in that setting, we rely on the results derived in \Cref{sec:est_prob_eps_success_RPX_gen_fun}. 

\subsection{Definitions and existing results}
\begin{definition}[Functions with low effective dimensionality, see \cite{Wang2016}] \label{def: simple_function}
	A function $f : \mathbb{R}^D \rightarrow \mathbb{R}$ has effective dimension $d_e < D$  if there exists a linear subspace $\mathcal{T}$ of dimension $d_e$ such that for all vectors $\mvec{x}_{\top}$ in $\mathcal{T}$ and $\mvec{x}_{\perp}$ in $\mathcal{T}^{\perp}$ (the orthogonal complement of $\mathcal{T}$), we have
	\begin{equation}\label{eq: def_fun_eff_dim}
		f(\mvec{x}_{\top} + \mvec{x}_{\perp}) = f(\mvec{x}_{\top}),
	\end{equation}
	and $d_e$ is the smallest integer satisfying \eqref{eq: def_fun_eff_dim}.
\end{definition}

The linear subspaces $\mathcal{T}$ and $\mathcal{T}^{\perp}$ are respectively named the \textit{effective} and \textit{constant} subspaces of $f$. In this section, we make the following assumption on the function $f$. 

\begin{assumpA}{LowED}\label{ass:AREGO_fun_eff_dim}
	The function $f:\mathbb{R}^D \rightarrow \mathbb{R}$ has effective dimensionality $d_e $ with effective subspace\footnote{Note that $\mathcal{T}$ in \Cref{ass:AREGO_fun_eff_dim} may not be aligned with the standard axes. } $\mathcal{T}$ and constant subspace $\mathcal{T}^{\perp}$ spanned by the columns of the orthonormal matrices $\mtx{U} \in \mathbb{R}^{D \times d_e}$ and $\mtx{V} \in \mathbb{R}^{D\times(D-d_e)}$, respectively. We write $\mvec{x}_\top = \mtx{U} \mtx{U}^T \mvec{x}$ and $\mvec{x}_\perp = \mtx{V} \mtx{V}^T \mvec{x}$, the unique Euclidean projections of any vector $\mvec{x} \in \mathbb{R}^D$ onto $\mathcal{T}$ and $\mathcal{T}^\perp$, respectively.
\end{assumpA}

As discussed in \cite{Cartis2020b}, functions with low effective dimension have the nice property that their global minimizers are not isolated: to any global minimizer $\mvec{x}^*$ of \eqref{eq: GO}, with Euclidean projection $\mtx{x}_\top^*$ on the effective subspace $\mathcal{T}$, one can associate a subspace $\mathcal{G}^*$ on which the objective reaches its minimal value. Indeed, writing
\begin{equation}  \label{eq:calGstar}
    \mathcal{G}^* = \{ \mvec{x}^*_\top + \mtx{V} \mvec{h}: \mvec{h} \in \mathbb{R}^{D - d_e} \},
\end{equation}  
\Cref{ass:AREGO_fun_eff_dim} implies that $f(\mvec{x}) = f^*$ for all $\mvec{x} \in \mathcal{G}^*$.

In the case $d \geq d_e$, the following result, derived in \cite{Wang2016}, says that the reduced problem \eqref{eq: AREGO} is successful with probability one.

\begin{theorem}(see \cite[Theorem 2]{Wang2016}, and \cite[Rem. 2.22]{Otemissov2021}) \label{thm:Wang}
Let $\mathcal{X} = \mathbb{R}^D$ and let \Cref{ass:AREGO_fun_eff_dim} hold. Let $\mtx{A}$ be a $D \times d$ Gaussian
matrix with $d \geq d_e$, and let $\mvec{p} \in \mathbb{R}^D$. Then, with probability one, for any fixed $\mvec{x} \in \mathbb{R}^D$, there exists a $\mvec{y} \in \mathbb{R}^d$ such that $f(\mvec{x}) = f(\mtx{A} \mvec{y} + \mvec{p})$. In particular, for any global minimizer $\mvec{x}^*$ of \eqref{eq: GO}, with probability one, there exists $\mvec{y}^* \in \mathbb{R}^d$ such that $f(\mtx{A} \mvec{y}^* + \mvec{p}) = f(\mvec{x}^*) = f^*$.
\end{theorem}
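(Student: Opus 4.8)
The plan is to reduce the statement to the solvability of a single linear system, using only the defining identity of functions with low effective dimensionality. Let $\mtx{U} \in \mathbb{R}^{D \times d_e}$ be the orthonormal basis of the effective subspace $\mathcal{T}$ as in \Cref{ass:AREGO_fun_eff_dim}, so that $f(\mvec{z}) = f(\mtx{U}\mtx{U}^T\mvec{z})$ for every $\mvec{z} \in \mathbb{R}^D$ (this is \eqref{eq: def_fun_eff_dim} rewritten in terms of the projection onto $\mathcal{T}$). Given a target $\mvec{x} \in \mathbb{R}^D$ and the shift $\mvec{p}$, I would aim to produce $\mvec{y} \in \mathbb{R}^d$ for which $\mtx{A}\mvec{y}+\mvec{p}$ and $\mvec{x}$ have the \emph{same} Euclidean projection onto $\mathcal{T}$; by the displayed identity this immediately forces $f(\mtx{A}\mvec{y}+\mvec{p}) = f(\mvec{x})$. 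Equality of the projections is equivalent to $\mtx{U}^T(\mtx{A}\mvec{y}+\mvec{p}) = \mtx{U}^T\mvec{x}$, i.e.\ to solving
\[ (\mtx{U}^T\mtx{A})\,\mvec{y} = \mtx{U}^T(\mvec{x}-\mvec{p}) \]
in $\mvec{y}$, where $\mtx{U}^T\mtx{A} \in \mathbb{R}^{d_e \times d}$.

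The one step that needs an argument is that, with probability one, $\mtx{U}^T\mtx{A}$ has full row rank $d_e$; once this holds, the system above is consistent for \emph{every} right-hand side, so the almost-sure event is uniform in $\mvec{x}$ (the quantifier ordering in the statement is then automatic). Here I would use that $\mtx{U}$ has orthonormal columns, so each column of $\mtx{U}^T\mtx{A}$ is the image under $\mtx{U}^T$ of a standard Gaussian column of $\mtx{A}$ and hence (since $\mtx{U}^T\mtx{U} = \mtx{I}_{d_e}$) a standard Gaussian vector in $\mathbb{R}^{d_e}$, the columns being independent; thus $\mtx{U}^T\mtx{A}$ is itself a $d_e \times d$ Gaussian matrix. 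Since $d \geq d_e$, the rank-deficient matrices form the common zero set of the $d_e \times d_e$ minors, a proper algebraic subvariety of $\mathbb{R}^{d_e \times d}$ and therefore Lebesgue-null; as the Gaussian law is absolutely continuous, $\rank(\mtx{U}^T\mtx{A}) = d_e$ almost surely.

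Combining the two: on the almost-sure event $\rank(\mtx{U}^T\mtx{A}) = d_e$, for any $\mvec{x} \in \mathbb{R}^D$ pick $\mvec{y}$ solving $(\mtx{U}^T\mtx{A})\mvec{y} = \mtx{U}^T(\mvec{x}-\mvec{p})$; then $\mtx{U}\mtx{U}^T(\mtx{A}\mvec{y}+\mvec{p}) = \mtx{U}\mtx{U}^T\mvec{x}$, hence
\[ f(\mtx{A}\mvec{y}+\mvec{p}) = f\bigl(\mtx{U}\mtx{U}^T(\mtx{A}\mvec{y}+\mvec{p})\bigr) = f(\mtx{U}\mtx{U}^T\mvec{x}) = f(\mvec{x}), \]
which is the first claim. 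The second claim is just the instance $\mvec{x} = \mvec{x}^*$ for a global minimizer of \eqref{eq: GO}, giving $\mvec{y}^*$ with $f(\mtx{A}\mvec{y}^*+\mvec{p}) = f(\mvec{x}^*) = f^*$.

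I do not expect a real obstacle here: the content is entirely carried by the projection identity from \Cref{ass:AREGO_fun_eff_dim}, and the only probabilistic ingredient is the routine genericity fact that a $d_e \times d$ Gaussian matrix with $d \geq d_e$ has full row rank almost surely. The mild subtlety worth stating explicitly is that the full-rank event does not depend on $\mvec{x}$, so ``with probability one, for any fixed $\mvec{x}$'' may in fact be strengthened to ``with probability one, for all $\mvec{x}$ simultaneously''. Since the result is attributed to \cite[Theorem~2]{Wang2016} and \cite[Rem.~2.22]{Otemissov2021}, one could alternatively just cite those; the argument above is the self-contained version.
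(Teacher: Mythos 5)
Your argument is correct and is essentially the proof that the paper delegates to via its citations (\cite[Theorem 2]{Wang2016}, \cite[Rem.~2.22]{Otemissov2021}): reduce to the consistency of $(\mtx{U}^T\mtx{A})\mvec{y}=\mtx{U}^T(\mvec{x}-\mvec{p})$ and use that $\mtx{U}^T\mtx{A}$ is a $d_e\times d$ Gaussian matrix (the paper's \Cref{thm: orthog_inv_of_Gaussian_matrices}) of full row rank almost surely when $d\geq d_e$. Your observation that the almost-sure event is independent of $\mvec{x}$, so the conclusion holds for all $\mvec{x}$ simultaneously, is a valid (and standard) strengthening.
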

Thus, in the unconstrained case $\mathcal{X} = \mathbb{R}^D$, the solution of a single reduced problem \eqref{eq: AREGO} with subspace dimension $d \geq d_e$ provides an exact global minimizer of the original problem \eqref{eq: GO} with probability one. In the next section, we address the case $d < d_e$.

\subsection{Probability of success of the reduced problem for lower dimensional embeddings}

Unfortunately, \Cref{thm:Wang} crucially depends on the assumption $d \geq d_e$. When $d < d_e$, we quantify the probability of the random embedding to contain a (global) $\epsilon$-minimizer. Similarly to the definition of $\mathcal{G}^*$ above, one may associate to any global minimizer $\mvec{x}^*$ of \eqref{eq: GO} a connected set $\mathcal{G}_\epsilon^*$ of $\epsilon$-minimizers. Denoting   the Euclidean projection of $\mvec{x}^*$ on the effective subspace by $\mvec{x}_\top^*$, under \Cref{ass:f_is_Lipschitz} (Lipschitz continuity of $f$), $\mathcal{G}_\epsilon^*$ is the Cartesian product of a $d_e$-dimensional ball (contained in the effective subspace) by the constant subspace $\mathcal{T}^\perp$ (see \Cref{ass:AREGO_fun_eff_dim}):
\begin{equation} \label{eq: calG_epsstar}
    \mathcal{G}_\epsilon^* := \{ \mvec{x}^*_\top + \mtx{U} \mvec{g} + \mtx{V} \mvec{h} : \mvec{g} \in \R^{d_e}, \rVert \mvec{g} \rVert \leq \epsilon/L, \mvec{h} \in \R^{D-d_e}\},
\end{equation}
where  $L$ is the Lipschitz constant of $f$. Indeed, let $\mvec{x}  := \mvec{x}^*_\top + \mtx{U} \mvec{g} + \mtx{V} \mvec{h} \in \mathcal{G}_{\epsilon}^*$, for some $\mvec{g}\in \R^{d_e}$ satisfying $\rVert\mvec{g}\rVert \leq \epsilon/L$ and for some  $\mvec{h}\in \R^{D-d_e}$. Then, 
$f(\mvec{x}) = f(\mvec{x}^*_\top + \mtx{U} \mvec{g})$ by \Cref{ass:AREGO_fun_eff_dim}, since $\mtx{V} \mvec{h} \in \mathcal{T}^\perp$. By Lipschitz continuity of $f$, we get: 
\begin{equation} \label{eq:calGepsstar_approx_min}
    f(\mvec{x}) = f(\mvec{x}^*_\top + \mtx{U} \mvec{g}) \leq f(\mvec{x}^*_\top) + L \| \mtx{U} \mvec{g} \| \leq f^* + \epsilon. 
\end{equation}

As already discussed in \Cref{sec: prelim}, the reduced problem \eqref{eq: AREGO} is $\epsilon$-successful if the random subspace $\mvec{p}+\range(\mtx{A})$ intersects the set of approximate global minimizers, which by \Cref{lem:Gstarteps_contained_in_Geps} contains any connected components $\mathcal{G}_\epsilon^*$ defined in \eqref{eq: calG_epsstar} for some global minimizer $\mvec{x}^*$ of \eqref{eq: GO}. \Cref{fig:illustr_low_effective_dim} shows an abstract representation of the situation where the random subspace $\mvec{p}+\range(\mtx{A}_1)$  intersects the connected component $\mathcal{G}_\epsilon^*$, the corresponding embedding is therefore $\epsilon$-successful; conversely, the random subspace $\mvec{p}+\range(\mtx{A}_2)$ does not intersect $\mathcal{G}_\epsilon^*$. If $\mathcal{G}_\epsilon^* = G_\epsilon$ defined in \eqref{eq: G_epsilon}, this implies that the corresponding embedding is not $\epsilon$-successful. 

\begin{figure}[h!]
    \centering
    \includegraphics[scale = 0.6,trim = 0  0 0 0, clip]{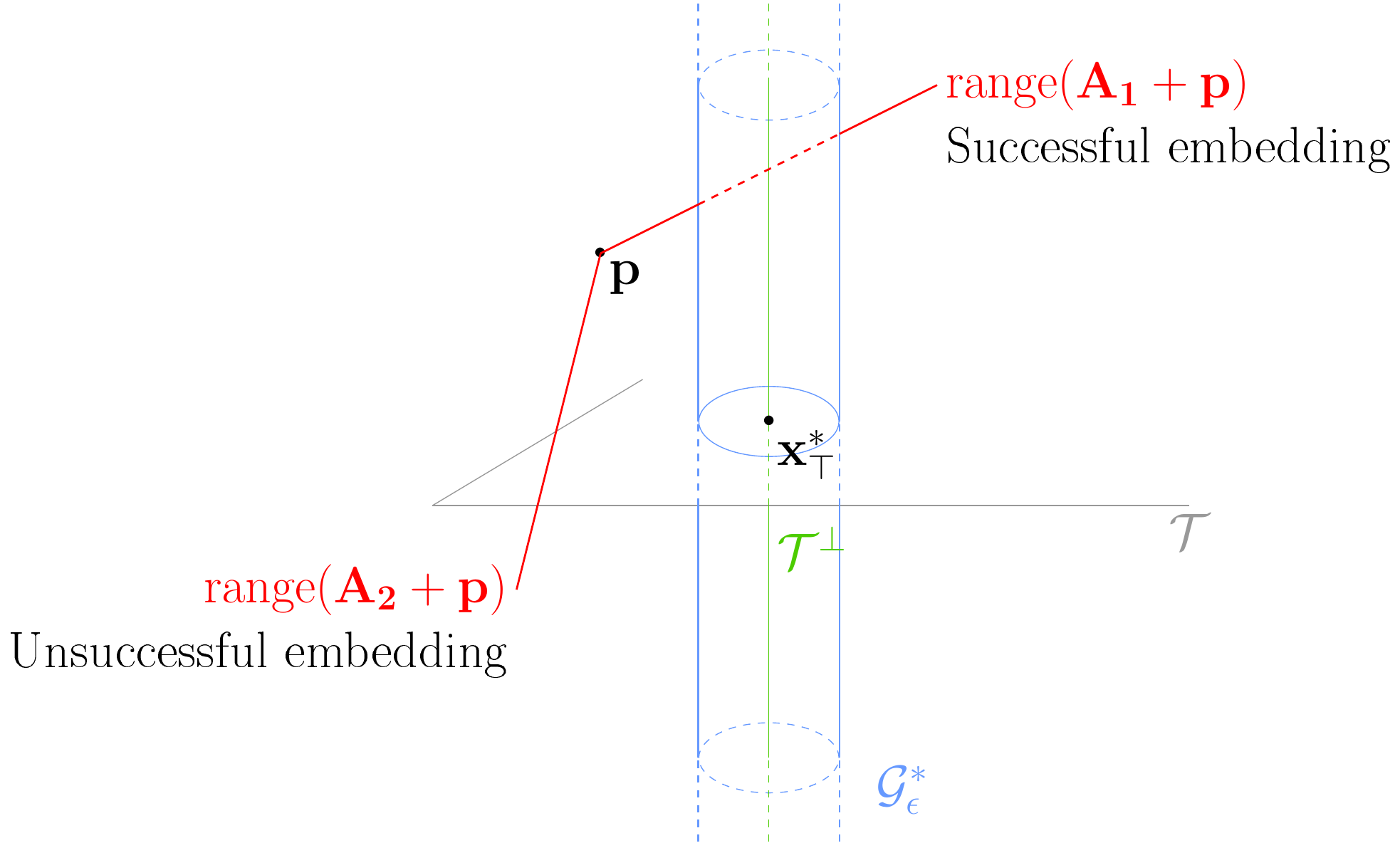}
    \caption{Abstract illustration of  embeddings for functions with low effective dimension. The reduced problem is $\epsilon$-successful if the random subspace intersects the connected component $\mathcal{G}_{\epsilon}^*$.}
    \label{fig:illustr_low_effective_dim}
\end{figure}

The following result further characterizes the probability of success of \eqref{eq: AREGO}.

\begin{theorem} \label{lem:Gstarteps_contained_in_Geps}
Let $\mathcal{X} = \mathbb{R}^D$, and let Assumptions \ref{ass:f_is_Lipschitz} and \ref{ass:AREGO_fun_eff_dim} hold. Let $\mtx{A}$ be a $D \times d$ Gaussian matrix, $\mvec{p} \in \R^D$ be a fixed vector, $\epsilon>0$ an accuracy tolerance and $\mvec{x}^*$ any global minimizer of \eqref{eq: GO} with associate connected component $\mathcal{G}_\epsilon^*$ as in \eqref{eq: calG_epsstar}. Then, 
\begin{align*}
    \prob[\eqref{eq: AREGO} \text{ is $\epsilon$-successful}] &\geq \prob[\mvec{p}+\range(\mtx{A}) \cap \mathcal{G}^*_{\epsilon} \neq \varnothing], \\
    &= \prob[ \mtx{U}^T \mvec{p} + \mathrm{range}(\mtx{B}) \cap B_{\epsilon/L}(\mtx{U}^T\mvec{x}^*) \neq \varnothing], 
\end{align*} 
where $\mtx{U}$ is an orthonormal matrix whose columns span the effective subspace $\mathcal{T}$ (see \Cref{ass:AREGO_fun_eff_dim}), $\mtx{B} := \mtx{U}^T \mtx{A}$, a $d_e  \times d$ Gaussian matrix and $B_{\epsilon/L}(\mtx{U}^T\mvec{x}^*)$, the $d_e$-dimensional ball of radius $\epsilon/L$ centered at $\mtx{U}^T\mvec{x}^*$.
\end{theorem}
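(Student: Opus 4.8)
The statement splits into an inequality and an identity, which I would handle in turn.

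\textbf{The inequality.} By \eqref{eq:RPX_eps_succ=subspace_cap_G_eps}, the probability $\prob[\eqref{eq: AREGO}\text{ is }\epsilon\text{-successful}]$ equals $\prob[\mvec{p}+\range(\mtx{A}) \cap G_{\epsilon} \neq \varnothing]$, with $G_{\epsilon}$ defined in \eqref{eq: G_epsilon}. The computation in \eqref{eq:calGepsstar_approx_min}, already carried out just above the theorem statement, shows that $\mathcal{G}_{\epsilon}^* \subseteq G_{\epsilon}$: for $\mvec{x} = \mvec{x}^*_{\top} + \mtx{U}\mvec{g} + \mtx{V}\mvec{h}$ with $\|\mvec{g}\| \leq \epsilon/L$, \Cref{ass:AREGO_fun_eff_dim} gives $f(\mvec{x}) = f(\mvec{x}^*_{\top} + \mtx{U}\mvec{g})$ and Lipschitz continuity bounds this by $f^* + L\|\mtx{U}\mvec{g}\| \leq f^* + \epsilon$. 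Consequently the event $\{\mvec{p}+\range(\mtx{A}) \cap \mathcal{G}_{\epsilon}^* \neq \varnothing\}$ is contained in $\{\mvec{p}+\range(\mtx{A}) \cap G_{\epsilon} \neq \varnothing\}$, and the inequality follows by monotonicity of $\prob$.

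\textbf{The identity.} I would prove that, for every realization of $\mtx{A}$, the two events $\{\mvec{p}+\range(\mtx{A}) \cap \mathcal{G}_{\epsilon}^* \neq \varnothing\}$ and $\{\mtx{U}^T\mvec{p}+\range(\mtx{B}) \cap B_{\epsilon/L}(\mtx{U}^T\mvec{x}^*) \neq \varnothing\}$, with $\mtx{B} := \mtx{U}^T\mtx{A}$, coincide as subsets of the sample space; equality of probabilities is then immediate. A point of $\mvec{p}+\range(\mtx{A})$ has the form $\mvec{p}+\mtx{A}\mvec{y}$ with $\mvec{y} \in \R^d$, and a point of $\mathcal{G}_{\epsilon}^*$ the form $\mvec{x}^*_{\top}+\mtx{U}\mvec{g}+\mtx{V}\mvec{h}$ with $\|\mvec{g}\| \leq \epsilon/L$, $\mvec{h} \in \R^{D-d_e}$. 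Using $\mtx{U}^T\mtx{U} = \mtx{I}_{d_e}$, $\mtx{U}^T\mtx{V} = \mtx{0}$ (since $\mathcal{T} \perp \mathcal{T}^\perp$) and $\mtx{U}^T\mvec{x}^*_{\top} = \mtx{U}^T\mvec{x}^*$, applying $\mtx{U}^T$ to $\mvec{p}+\mtx{A}\mvec{y} = \mvec{x}^*_{\top}+\mtx{U}\mvec{g}+\mtx{V}\mvec{h}$ yields $\mtx{U}^T\mvec{p}+\mtx{B}\mvec{y} = \mtx{U}^T\mvec{x}^*+\mvec{g}$, i.e.\ $\mtx{U}^T\mvec{p}+\mtx{B}\mvec{y} \in B_{\epsilon/L}(\mtx{U}^T\mvec{x}^*)$; this gives the inclusion $\subseteq$. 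For the reverse inclusion, given $\mvec{y}$ such that $\mvec{g} := \mtx{U}^T\mvec{p}+\mtx{B}\mvec{y}-\mtx{U}^T\mvec{x}^*$ has norm $\leq \epsilon/L$, I would check that the orthogonal projection of $\mvec{p}+\mtx{A}\mvec{y}$ onto $\mathcal{T}$ equals $\mtx{U}\mtx{U}^T(\mvec{p}+\mtx{A}\mvec{y}) = \mtx{U}(\mtx{U}^T\mvec{p}+\mtx{B}\mvec{y}) = \mvec{x}^*_{\top}+\mtx{U}\mvec{g}$, so the difference $(\mvec{p}+\mtx{A}\mvec{y}) - (\mvec{x}^*_{\top}+\mtx{U}\mvec{g})$ has zero $\mathcal{T}$-component; by the decomposition $\R^D = \mathcal{T} \oplus \mathcal{T}^\perp$ it therefore lies in $\mathcal{T}^\perp = \range(\mtx{V})$ and can be written $\mtx{V}\mvec{h}$, whence $\mvec{p}+\mtx{A}\mvec{y} \in \mathcal{G}_{\epsilon}^*$.

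Finally I would record that $\mtx{B} = \mtx{U}^T\mtx{A}$ is indeed a $d_e \times d$ Gaussian matrix: the columns of $\mtx{A}$ are i.i.d.\ $\mathcal{N}(\mvec{0},\mtx{I}_D)$, and since $\mtx{U}$ has orthonormal columns the columns of $\mtx{B}$ are i.i.d.\ $\mathcal{N}(\mvec{0},\mtx{U}^T\mtx{U}) = \mathcal{N}(\mvec{0},\mtx{I}_{d_e})$ (cf.\ \Cref{def: Gaussian_matrix}). No step here is genuinely hard; the only point that needs care is the reverse inclusion in the identity, where one must verify that the residual $(\mvec{p}+\mtx{A}\mvec{y}) - (\mvec{x}^*_{\top}+\mtx{U}\mvec{g})$ really lands in $\range(\mtx{V})$ — which is precisely where the relations $\mtx{U}^T\mtx{U} = \mtx{I}_{d_e}$, $\mtx{U}^T\mtx{V} = \mtx{0}$ and the splitting $\R^D = \mathcal{T} \oplus \mathcal{T}^\perp$ do the work.
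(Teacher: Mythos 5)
Your proof is correct and follows essentially the same route as the paper: the inequality via $\mathcal{G}_\epsilon^* \subseteq G_\epsilon$ and \eqref{eq:RPX_eps_succ=subspace_cap_G_eps}, and the identity via the orthogonal splitting $\R^D = \mathcal{T}\oplus\mathcal{T}^\perp$, which the paper writes as the single decomposition $\mtx{A}\mvec{y}+\mvec{p} = \mvec{x}_\top^* + \mtx{U}(\mtx{B}\mvec{y}+\mtx{U}^T\mvec{p}-\mtx{U}^T\mvec{x}^*) + \mtx{V}(\mtx{C}\mvec{y}+\mtx{V}^T\mvec{p})$ rather than as two separate inclusions. The only cosmetic difference is that you verify the Gaussianity of $\mtx{B}=\mtx{U}^T\mtx{A}$ directly, where the paper cites \Cref{thm: orthog_inv_of_Gaussian_matrices}.
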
 

\begin{proof}
The first inequality simply follows from \eqref{eq:RPX_eps_succ=subspace_cap_G_eps} and from the fact that $\mathcal{G}_\epsilon^* \subseteq G_\epsilon$, see \eqref{eq:calGepsstar_approx_min}. 
For the second relationship, since the matrix $\mtx{Q} := \begin{bmatrix} \mtx{U} & \mtx{V} \\ \end{bmatrix}$ (with $\mtx{V}$ defined in \Cref{ass:AREGO_fun_eff_dim}) is orthogonal, for all $\mvec{y} \in \mathbb{R}^{d}$
\begin{equation*}
    \mtx{A} \mvec{y} + \mvec{p}  = \mtx{Q} \mtx{Q}^T (\mtx{A} \mvec{y} + \mvec{p}) =  \begin{bmatrix} \mtx{U} & \mtx{V} \\ \end{bmatrix} \begin{bmatrix} \mtx{U}^T \\ \mtx{V}^T \\ \end{bmatrix} (\mtx{A} \mvec{y} + \mvec{p})  = (\mtx{U}\mtx{U}^T + \mtx{V}\mtx{V}^T) (\mtx{A} \mvec{y} + \mvec{p}).
\end{equation*}
Writing $\mtx{B} := \mtx{U}^T \mtx{A} \in \R^{d_e \times d}$ and $\mtx{C} := \mtx{V}^T \mtx{A} \in \R^{(D-d_e)  \times d}$, we get for any global minimizer $\mvec{x}^*$  of \eqref{eq: GO} (with associate Euclidean projection $\mvec{x}_\top^*$ on the effective subspace)
\begin{align}
    \mtx{A} \mvec{y} + \mvec{p}  &= \mtx{U} (\mtx{B} \mvec{y} + \mtx{U}^T \mvec{p}) + \mtx{V} (\mtx{C} \mvec{y} + \mtx{V}^T \mvec{p}) \\ 
     &= \mvec{x}_\top^* + \mtx{U} (\mtx{B} \mvec{y} + \mtx{U}^T \mvec{p} - \mtx{U}^T \mvec{x}^*) + \mtx{V} (\mtx{C} \mvec{y} + \mtx{V}^T \mvec{p}). \label{eq: subspace_dec}
\end{align}  
By definition of $\mathcal{G}_\epsilon^*$, there follows that $\mtx{A} \mvec{y} + \mvec{p} \in \mathcal{G}_\epsilon^*$ if and only if $ \mtx{B} \mvec{y} + \mtx{U}^T \mvec{p} \in B_{\epsilon/L}(\mtx{U}^T \mvec{x}^*)$. By \Cref{thm: orthog_inv_of_Gaussian_matrices}, $\mtx{B}$ is a $d_e \times d$ Gaussian matrix, which completes the proof.
\end{proof}

The probability of $\epsilon$-success of \eqref{eq: AREGO} can thus be lower bounded by the probability of the $d$-dimensional random subspace $\range(\mtx{B})+ \mtx{U}^T \mvec{p}$ intersecting the  ball $B_{\epsilon/L}(\mtx{U}^T \mvec{x}^*)$ in $\R^{d_e}$. We now estimate the latter probability using the conic integral geometry results presented in \Cref{sec: Conic_integral_geomtry} and \Cref{sec:est_prob_eps_success_RPX_gen_fun}: the two next results can be seen as the immediate counterparts of \Cref{thm:RPX_eps_succ>t(r_p)} and \Cref{thm:RPX_eps_succ_gen_uniform_bound} for functions with low effective dimensionality. 

\begin{corollary} \label{thm:LEDRPX_eps_succ>t(r_p)}
Let $\mathcal{X} = \mathbb{R}^D$, and suppose that Assumptions \ref{ass:f_is_Lipschitz} and \ref{ass:AREGO_fun_eff_dim} hold, with effective dimension $d_e > d$. Let $\epsilon>0$ be an accuracy tolerance, $\mtx{A}$, a $D \times d$ Gaussian matrix and $\mvec{p} \in \mathbb{R}^D \setminus G_{\epsilon}$, a given vector.  Let $r_{\mvec{p}}^{\eff} := \epsilon / \left( L \rVert \mtx{U}^T \mvec{x}^* - \mtx{U}^T \mvec{p} \rVert \right)$, where $\mvec{x}^*$ is any global minimizer of \eqref{eq: GO}. Then
\begin{equation} \label{eq:LEDRPX_eps_succ>tau_p}
    \prob[\eqref{eq: AREGO} \text{ is $\epsilon$-successful}] \geq \tau(r_{\mvec{p}}^{\eff},d,d_e), 
\end{equation} 
where the function $\tau(r,d,d_e)$ for $0<r<1$ is defined in \eqref{def:tau_p}. 
\end{corollary}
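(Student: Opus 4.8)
The plan is to transport the argument of \Cref{sec: prelim} through \Cref{sec:est_prob_eps_success_RPX_gen_fun} from $\R^D$ down to the $d_e$-dimensional effective subspace, using \Cref{lem:Gstarteps_contained_in_Geps} as the bridge. That result already gives
\[ \prob[\eqref{eq: AREGO}\text{ is $\epsilon$-successful}] \;\geq\; \prob\bigl[\mtx{U}^T\mvec{p} + \range(\mtx{B}) \cap B_{\epsilon/L}(\mtx{U}^T\mvec{x}^*) \neq \varnothing\bigr], \]
where $\mtx{B} = \mtx{U}^T\mtx{A}$ is a $d_e\times d$ Gaussian matrix and $B_{\epsilon/L}(\mtx{U}^T\mvec{x}^*)$ is the $d_e$-dimensional Euclidean ball of radius $\epsilon/L$ centred at $\mtx{U}^T\mvec{x}^*$. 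Since $d<d_e$, the columns of $\mtx{B}$ are almost surely linearly independent, so $\range(\mtx{B})$ is almost surely a genuine $d$-dimensional linear subspace of $\R^{d_e}$; this is precisely where the hypothesis $d_e>d$ enters, and it is what allows the conic-geometry machinery to be rerun in $\R^{d_e}$.

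Before building the cone I would verify that $\mtx{U}^T\mvec{p}\notin B_{\epsilon/L}(\mtx{U}^T\mvec{x}^*)$. By \Cref{ass:AREGO_fun_eff_dim}, $f(\mvec{p}) = f(\mtx{U}\mtx{U}^T\mvec{p})$ and $f^* = f(\mvec{x}^*) = f(\mtx{U}\mtx{U}^T\mvec{x}^*)$; since $\mvec{p}\notin G_\epsilon$ we have $f(\mtx{U}\mtx{U}^T\mvec{p}) > f^* + \epsilon$, and \Cref{ass:f_is_Lipschitz} then yields
\[ \epsilon \;<\; f(\mtx{U}\mtx{U}^T\mvec{p}) - f(\mtx{U}\mtx{U}^T\mvec{x}^*) \;\leq\; L\,\|\mtx{U}\mtx{U}^T(\mvec{p}-\mvec{x}^*)\| \;=\; L\,\|\mtx{U}^T\mvec{p} - \mtx{U}^T\mvec{x}^*\|, \]
so $\|\mtx{U}^T\mvec{p} - \mtx{U}^T\mvec{x}^*\| > \epsilon/L$, i.e. $r_{\mvec{p}}^{\eff}\in(0,1)$ and $\alpha_{\mvec{p}}^{\eff} := \arcsin(r_{\mvec{p}}^{\eff})\in(0,\pi/2)$ is well defined; in particular $\ccone_{d_e}(\alpha_{\mvec{p}}^{\eff})$ is not a linear subspace.

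Then I would replay, verbatim with $D$ replaced by $d_e$, the construction of $C_{\mvec{p}}(\mvec{x}^*)$ in \eqref{def:C(p)} and the reasoning of \Cref{thm:RPX_eps_succ>p+range_cap_C(p)} and \Cref{cor:RPX_eps_succ>QL_cap_Circ}, now with $(\mtx{U}^T\mvec{p},\mtx{U}^T\mvec{x}^*)$ in place of $(\mvec{p},\mvec{x}^*)$: the cone of rays from $\mtx{U}^T\mvec{p}$ through $B_{\epsilon/L}(\mtx{U}^T\mvec{x}^*)$ is a rotated copy of $\ccone_{d_e}(\alpha_{\mvec{p}}^{\eff})$ translated by $\mtx{U}^T\mvec{p}$, the displayed probability is bounded below by the probability that $\mtx{U}^T\mvec{p}+\range(\mtx{B})$ meets this translated cone in more than its apex, and translating both `cones' back to the origin and using orthogonal invariance of Gaussian matrices together with \Cref{thm:A = QL} (in $\R^{d_e}$) turns this into $\prob[\mtx{Q}\mathcal{L}_d \cap \ccone_{d_e}(\alpha_{\mvec{p}}^{\eff}) \neq \{\mvec{0}\}]$, with $\mtx{Q}$ a $d_e\times d_e$ random orthogonal matrix and $\mathcal{L}_d$ a $d$-dimensional subspace of $\R^{d_e}$. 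Finally, applying the Crofton formula \eqref{eq:Crofton_formula} and discarding all but the leading nonnegative intrinsic volume, exactly as in \eqref{eq:RPX_eps_succ>2v}, bounds this below by $2v_{d_e-d+1}(\ccone_{d_e}(\alpha_{\mvec{p}}^{\eff}))$, which \eqref{eq: d_intr_vol_ccone}--\eqref{eq: D_intr_vol_ccone} identify with $\tau(r_{\mvec{p}}^{\eff},d,d_e)$ from \eqref{def:tau_p}. Chaining the inequalities proves the claim.

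I expect no genuine obstacle: the content is the reduction supplied by \Cref{lem:Gstarteps_contained_in_Geps} followed by a word-for-word re-derivation of \Cref{thm:RPX_eps_succ>t(r_p)} in dimension $d_e$. The only points that need a moment's care are the elementary check that $\mtx{U}^T\mvec{p}$ lies strictly outside $B_{\epsilon/L}(\mtx{U}^T\mvec{x}^*)$ (so that the circular cone is well defined and not a subspace) and the observation that $d<d_e$ forces $\range(\mtx{B})$ to be almost surely $d$-dimensional, which is what legitimizes applying \Cref{thm:A = QL} and the Crofton formula in $\R^{d_e}$.
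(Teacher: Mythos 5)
Your proposal is correct and follows essentially the same route as the paper, which proves this corollary precisely by invoking \Cref{lem:Gstarteps_contained_in_Geps} to pass to the effective subspace and then rerunning the argument of \Cref{thm:RPX_eps_succ>t(r_p)} with $\mtx{A}$, $\mvec{x}^*$, $\mvec{p}$, $D$ replaced by $\mtx{B}$, $\mtx{U}^T\mvec{x}^*$, $\mtx{U}^T\mvec{p}$, $d_e$. Your explicit checks that $\mtx{U}^T\mvec{p}$ lies strictly outside $B_{\epsilon/L}(\mtx{U}^T\mvec{x}^*)$ and that $\range(\mtx{B})$ is almost surely $d$-dimensional are welcome details that the paper leaves implicit.
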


\begin{proof}
The result is a direct extension of the analysis made in \Cref{sec:est_prob_eps_success_RPX_gen_fun}, and more precisely, \Cref{thm:RPX_eps_succ>t(r_p)}, replacing $\mtx{A}$ by $\mtx{B}$, $\mvec{x}^*$ by $\mtx{U}^T \mvec{x}^*$, $\mvec{p}$ by $\mtx{U}^T \mvec{p}$ and $D$ by $d_e$. 
\end{proof}

Similarly as \Cref{thm:RPX_eps_succ_gen_uniform_bound}, the next result provides a uniform lower bound on the probability of $\epsilon$-success of \eqref{eq: AREGO}. 

\begin{corollary} \label{thm:LEDRPX_eps_succ_gen_uniform_bound}
Let $\mathcal{X} = \mathbb{R}^D$, and suppose that Assumptions \ref{ass:f_is_Lipschitz} and \ref{ass:AREGO_fun_eff_dim} hold, with effective dimension $d_e > d$. Let $\epsilon>0$ be an accuracy tolerance, $\mtx{A}$, a $D \times d$ Gaussian matrix, $\mvec{x}^*$, any global minimizer of \eqref{eq: GO}. Let $\mvec{p} \in \mathbb{R}^D$ be a given vector that satisfies $\|\mtx{U}^T \mvec{p} - \mtx{U}^T \mvec{x}^*\| \leq R_{\max}$, for some suitably chosen $R_{\max}$, and let $r_{\min}^{\eff} := \epsilon / \left( L R_{\max} \right)$. Then
\begin{equation} \label{eq:LEDRPX_eps_succ>tau}
    \prob[\eqref{eq: AREGO} \text{ is $\epsilon$-successful}] \geq \tau(r_{\min}^{\eff},d,d_e), 
\end{equation} 
where the function $\tau(r,d,d_e)$ for $0<r<1$ is defined in \eqref{def:tau_p}. 
\end{corollary}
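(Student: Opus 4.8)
The plan is to reduce the statement to the $d_e$-dimensional uniform bound of \Cref{thm:RPX_eps_succ_gen_uniform_bound} via the dimensional reduction already established in \Cref{lem:Gstarteps_contained_in_Geps}, exactly mirroring how \Cref{thm:LEDRPX_eps_succ>t(r_p)} was obtained from \Cref{thm:RPX_eps_succ>t(r_p)}. First I would invoke the second line of \Cref{lem:Gstarteps_contained_in_Geps}, which gives
\[ \prob[\eqref{eq: AREGO} \text{ is $\epsilon$-successful}] \geq \prob[\mtx{U}^T\mvec{p} + \range(\mtx{B}) \cap B_{\epsilon/L}(\mtx{U}^T\mvec{x}^*) \neq \varnothing], \]
where $\mtx{B} = \mtx{U}^T\mtx{A}$ is a $d_e \times d$ Gaussian matrix (by \Cref{thm: orthog_inv_of_Gaussian_matrices}). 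This right-hand side is precisely the quantity that the proof of \Cref{thm:RPX_eps_succ_gen_uniform_bound} bounds from below, now in the ambient space $\R^{d_e}$, with $\mtx{U}^T\mvec{x}^*$ playing the role of $\mvec{x}^*$, $\mtx{U}^T\mvec{p}$ that of $\mvec{p}$, and the feasible set being all of $\R^{d_e}$. The hypothesis $\|\mtx{U}^T\mvec{p} - \mtx{U}^T\mvec{x}^*\| \leq R_{\max}$ is exactly the counterpart of the boundedness assumption on $\mvec{p}$ in that theorem.

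Next I would reproduce the two-case split of that proof in $\R^{d_e}$. If $\mtx{U}^T\mvec{p} \in B_{\epsilon/L}(\mtx{U}^T\mvec{x}^*)$, the affine subspace $\mtx{U}^T\mvec{p} + \range(\mtx{B})$ already contains the point $\mtx{U}^T\mvec{p}$ of the ball, so the intersection is non-empty with probability one and the bound holds trivially since $\tau(r_{\min}^{\eff},d,d_e) \leq 1/2$ by Gauss-Bonnet \eqref{eq:Gauss-Bonnet}. Otherwise, the cone of rays emanating from $\mtx{U}^T\mvec{p}$ through $B_{\epsilon/L}(\mtx{U}^T\mvec{x}^*)$ is a rotated and translated copy of the circular cone $\ccone_{d_e}(\alpha^{\eff}_{\mvec{p}})$ with $\alpha^{\eff}_{\mvec{p}} = \arcsin(r^{\eff}_{\mvec{p}})$ and $r^{\eff}_{\mvec{p}} = \epsilon/(L\|\mtx{U}^T\mvec{x}^* - \mtx{U}^T\mvec{p}\|)$; applying \Cref{cor:RPX_eps_succ>QL_cap_Circ} in dimension $d_e$ reduces the probability to $\prob[\mtx{Q}\mathcal{L}_d \cap \ccone_{d_e}(\alpha^{\eff}_{\mvec{p}}) \neq \{\mvec{0}\}]$. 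Since $r^{\eff}_{\mvec{p}} \geq r^{\eff}_{\min} := \epsilon/(LR_{\max})$, \Cref{lemma:circ(alpha)<circ(beta)_if_alpha<beta} gives $\ccone_{d_e}(\arcsin(r^{\eff}_{\min})) \subseteq \ccone_{d_e}(\alpha^{\eff}_{\mvec{p}})$, and then the Crofton formula (\Cref{cor: Crofton_formula}) together with nonnegativity of the intrinsic volumes and the explicit expressions \eqref{eq: d_intr_vol_ccone}, \eqref{eq: D_intr_vol_ccone} for the circular cone in $\R^{d_e}$ yield, exactly as in \eqref{eq:RPX_eps_succ>2v(C_min)}, the lower bound $2v_{d_e-d+1}(\ccone_{d_e}(\arcsin(r^{\eff}_{\min}))) = \tau(r^{\eff}_{\min},d,d_e)$, which is the claim.

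The argument is essentially mechanical, so the only points requiring care are bookkeeping ones: one must keep track of the fact that the reduced ambient dimension is $d_e$ (so that $\tau(\cdot,d,d_e)$ is the correct object, which needs $1 \le d < d_e$ — guaranteed by the hypothesis $d_e > d$ — and that the relevant intrinsic volume index $d_e-d+1$ lies in the admissible range), and that, unlike in \Cref{thm:LEDRPX_eps_succ>t(r_p)}, here $\mvec{p}$ is \emph{not} assumed outside $G_\epsilon$, which is why the trivial case $\mtx{U}^T\mvec{p} \in B_{\epsilon/L}(\mtx{U}^T\mvec{x}^*)$ must be treated separately. No genuinely new estimate is needed beyond those already in \Cref{sec: Conic_integral_geomtry} and \Cref{sec:est_prob_eps_success_RPX_gen_fun}.
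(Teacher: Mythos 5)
Your proposal is correct and follows essentially the same route as the paper, which simply states that the result is obtained from \Cref{thm:RPX_eps_succ_gen_uniform_bound} by the substitutions $\mtx{A}\mapsto\mtx{B}$, $\mvec{x}^*\mapsto\mtx{U}^T\mvec{x}^*$, $\mvec{p}\mapsto\mtx{U}^T\mvec{p}$ and $D\mapsto d_e$; you have merely written out that adaptation in full, including the two-case split and the Gauss--Bonnet sanity check, all of which match the paper's argument.
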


\begin{proof}
The result is a mere adaptation of \Cref{thm:RPX_eps_succ_gen_uniform_bound}, replacing $\mtx{A}$ by $\mtx{B}$, $\mvec{x}^*$ by $\mtx{U}^T \mvec{x}^*$, $\mvec{p}$ by $\mtx{U}^T \mvec{p}$ and $D$ by $d_e$. 
\end{proof}

Note that adding some constraints (setting $\mathcal{X} \subset \mathbb{R}^D$) makes the analysis much more complicated as even if a random subspace $\{\mvec{p}+\range(\mtx{A})\}$ intersects $\mathcal{G}_{\epsilon}^*$, this intersection may be outside the feasible domain; we therefore restrict ourselves to the unconstrained case in this paper.

\subsection{X-REGO for functions with low effective dimension}
We present an X-REGO variant dedicated to the optimization of functions with low effective dimension. This algorithm starts by exploring an embedding of low dimension $d_{lb}$, assuming $d_{lb} \leq d_e$, and the dimension is progressively increased until capturing the effective dimension of the problem, see \Cref{alg:LEDXREGO}. Note that \Cref{line:drawA} to \Cref{line:choosep} are exactly the same as in \Cref{alg:XREGO}. Recall that \Cref{thm:Wang} guarantees that the algorithm finds the global minimum of \eqref{eq: GO} with probability one if the reduced problem is solved exactly and if $d^k \geq d_e$, so that in this ideal case we can terminate the algorithm after $d_e - d_{lb} + 1$ random embeddings; thus, \Cref{alg:LEDXREGO} terminates in finitely many random embeddings. Since the effective dimension is unknown, we typically terminate the algorithm when no progress is observed in the objective value, see \Cref{sec: Numerics} for numerical illustrations.

\begin{algorithm}[h!]
	\caption{X-REGO for \eqref{eq: GO} when $f$ has low effective dimension}
	\label{alg:LEDXREGO}
	\begin{algorithmic}[1]
		\State Initialize $d^1 = d_{lb}$ for some $d_{lb}\geq 1$ and $\tilde{\mvec{p}}^0 \in \mathcal{X}$
		\For{\text{$k \geq 1$ until termination}} \label{termination}
		\State Run lines \ref{line:drawA} to \ref{line:choosep} in \Cref{alg:XREGO}.
		\State Let $d^{k+1} = d^k +1$.
		\EndFor
	\end{algorithmic}
\end{algorithm}

\subsection{Convergence of X-REGO for functions with low effective dimension}

Similarly to \Cref{sec: XREGO} and \Cref{sec:convergence}, for each $k$, $\mvec{p}^k$ is a random variable. The particular case of a deterministic $\mvec{p}^k$ is represented using a random variable whose support is a singleton. To prove convergence of \Cref{alg:LEDXREGO} to an $\epsilon$-minimizer while allowing the reduced problems to be solved approximately, we again require the reduced problems to be $(\epsilon-\lambda)$-successful, see \Cref{assump: prob_of_I_R>rho} and  \Cref{ass:lowerbdRPK}. Note that unlike \Cref{sec:convergencegenf}, the results below are finite termination results, as we known that with an ideal solver, \Cref{alg:LEDXREGO} finds an $\epsilon$-minimizer after at most $d_e-d_{lb}+1$ embeddings. Let us first show that \Cref{ass:lowerbdRPK} holds, and derive the value of $\tau_{lb}$. 

\begin{corollary}\label{cor:glconv_LED}
    Suppose that $\mathcal{X} = \mathbb{R}^D$, that Assumptions \ref{ass:f_is_Lipschitz} and \ref{ass:AREGO_fun_eff_dim} hold, that $\tilde{\mvec{p}}^k$ satisfies $\rVert \mtx{U}^T \tilde{\mvec{p}}^k - \mtx{U}^T \mvec{x}^*\rVert \leq R_{\max}$ for all $k$ and for some suitably chosen $R_{\max}$ and that $d_{lb} < d_e$.  Then, \Cref{ass:lowerbdRPK} holds with 
	\[\tau_{lb} = \tau(r_{\min}^{\eff},d_{lb},d_e),\]
	with $r_{\min}^{\eff} = (\epsilon-\lambda)/(L R_{\max})$ and $\tau(\cdot,\cdot,\cdot)$ defined in \eqref{def:tau_p}.
\end{corollary}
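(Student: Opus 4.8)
The plan is to follow the proof of \Cref{cor:glconv_gen_fun} essentially verbatim, replacing the generic success bound by its low-effective-dimension counterpart \Cref{thm:LEDRPX_eps_succ_gen_uniform_bound} (itself built on \Cref{lem:Gstarteps_contained_in_Geps}), and adding a case distinction to cover the iterations of \Cref{alg:LEDXREGO} at which the running dimension $d^k = d_{lb}+k-1$ has already grown past $d_e$. As in the other convergence corollaries, $\epsilon$ is replaced by $\epsilon-\lambda$ throughout to account for inexact subproblem solves. We may also assume $r_{\min}^{\eff} = (\epsilon-\lambda)/(L R_{\max}) < 1$: otherwise $\|\mtx{U}^T\tilde{\mvec{p}}^{k-1}-\mtx{U}^T\mvec{x}^*\|\le R_{\max}\le(\epsilon-\lambda)/L$ for all $k$, so taking $\mvec{y}=\mvec{0}$ shows $\tilde{\mvec{p}}^{k-1}$ is an $(\epsilon-\lambda)$-minimizer (using \Cref{ass:AREGO_fun_eff_dim} and Lipschitz continuity), and the claimed bound holds trivially since $\tau(\cdot,\cdot,\cdot)\le 1$ by Gauss--Bonnet \eqref{eq:Gauss-Bonnet}.

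First I would fix $k\ge 1$ and condition on $\mathcal{F}^{k-1}$. Since $\mtx{A}^k$ is a Gaussian matrix independent of $\mathcal{F}^{k-1}$ while $\mvec{p}^{k-1}$ is $\mathcal{F}^{k-1}$-measurable, conditionally on $\mathcal{F}^{k-1}$ the problem \eqref{prob: AREGO_subproblem} is a realisation of \eqref{eq: AREGO} with a fresh Gaussian matrix and the fixed vector $\mvec{p}=\tilde{\mvec{p}}^{k-1}$, so the bounds of \Cref{sec: loweffdim} apply conditionally; and since $R^k$ does not involve $\mvec{y}^k$, we have $\mathbb{E}[R^k | \mathcal{F}^{k-1}] = \prob[R^k=1 | \mathcal{F}^{k-1}] = \prob[\text{\eqref{prob: AREGO_subproblem} is $(\epsilon-\lambda)$-successful} | \mathcal{F}^{k-1}]$. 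If $d^k<d_e$, I would apply \Cref{thm:LEDRPX_eps_succ_gen_uniform_bound} conditionally, with subspace dimension $d=d^k$ and $\epsilon$ replaced by $\epsilon-\lambda$, to get that this conditional probability is at least $\tau(r_{\min}^{\eff},d^k,d_e)$; then, exactly as in the proof of \Cref{cor:glconv_gen_fun}, I would pass from $d^k$ to $d_{lb}$ by monotonicity. Recalling that $\tau(r,d,d_e)=2v_{d_e-d+1}(\ccone_{d_e}(\arcsin r))$ and that, by the Crofton formula, the conditional success probability equals $2h_{d_e-d^k+1}$ for the partial sums $h_j$ of the intrinsic volumes of $\ccone_{d_e}(\arcsin r_{\min}^{\eff})$, the inequality $h_j\ge h_{j+1}$ of \cite[Prop.~5.9]{Lotz2013} (used in ambient dimension $d_e$) together with $d^k\ge d_{lb}$ gives $2h_{d_e-d^k+1}\ge 2h_{d_e-d_{lb}+1}\ge 2v_{d_e-d_{lb}+1}(\ccone_{d_e}(\arcsin r_{\min}^{\eff}))=\tau(r_{\min}^{\eff},d_{lb},d_e)$.

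If instead $d^k\ge d_e$, I would invoke \Cref{thm:Wang} with subspace dimension $d^k\ge d_e$: for the global minimizer $\mvec{x}^*$ there is, with probability one, a $\mvec{y}^*$ with $f(\mtx{A}^k\mvec{y}^*+\tilde{\mvec{p}}^{k-1})=f^*\le f^*+\epsilon-\lambda$, so $R^k=1$ almost surely and $\mathbb{E}[R^k | \mathcal{F}^{k-1}]=1\ge\tau(r_{\min}^{\eff},d_{lb},d_e)$, the last inequality again because $\tau\le 1$. Combining the two cases shows \eqref{ineq: cond_exp>tau} in \Cref{ass:lowerbdRPK} holds with $\tau^k\equiv\tau_{lb}=\tau(r_{\min}^{\eff},d_{lb},d_e)>0$.

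The only real obstacle is the case split itself: \Cref{thm:LEDRPX_eps_succ>t(r_p)}--\Cref{thm:LEDRPX_eps_succ_gen_uniform_bound} are stated only for $d_e>d$, so the iterations with $d^k\ge d_e$ must be handled separately through \Cref{thm:Wang}; one must also check that the monotonicity-in-$d$ argument of \Cref{cor:glconv_gen_fun} transfers from ambient dimension $D$ to ambient dimension $d_e$, which it does because $\mtx{U}^T\mtx{A}^k$ is a genuine $d_e\times d^k$ Gaussian matrix (as used in the proof of \Cref{lem:Gstarteps_contained_in_Geps}), so $\ccone_{d_e}(\cdot)$ and the Crofton formula apply in $\R^{d_e}$ just as they did in $\R^D$. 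Everything else is the conditioning and measurability bookkeeping inherited from \Cref{sec:assProof}.
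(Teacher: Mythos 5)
Your proposal is correct and follows essentially the same route as the paper's (much terser) proof: for $d^k < d_e$ the paper also reuses the argument of \Cref{cor:glconv_gen_fun} with $D$ replaced by $d_e$ and $r_{\min}$ by $r_{\min}^{\eff}$ (Crofton formula, monotonicity $h_j \geq h_{j+1}$, nonnegativity of intrinsic volumes), and for $d^k \geq d_e$ it likewise invokes \Cref{thm:Wang} together with $\tau(r_{\min}^{\eff},d_{lb},d_e) = 2v_{d_e-d_{lb}+1} \leq 1$ via Gauss--Bonnet. Your additional remarks on the degenerate case $r_{\min}^{\eff}\geq 1$ and on the conditioning bookkeeping are sound elaborations of details the paper leaves implicit.
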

\begin{proof}
For all embeddings such that $d^k < d_e$, the proof is the same as for \Cref{cor:glconv_gen_fun}, replacing $D$ by $d_e$ and $r_{\min}$ by $r_{\min}^{\eff}$. Note that if $d^k \geq d_e$, \eqref{eq: AREGO} is successful with probability one according to \Cref{thm:Wang}. The result follows then simply from the fact that $1 \geq \tau(r_{\min},d_{lb},d_e) = 2 v_{d_e-d_{lb}+1}$ (see the Gauss-Bonnet formula \eqref{eq:Gauss-Bonnet}, and the fact that the intrinsic volumes are nonnegative).
\end{proof}

The following result proves convergence of \Cref{alg:LEDXREGO} to the set of $\epsilon$-minimizers almost surely after at most $d_e - d_{lb} +1$ random embedding. Note in particular that this convergence result has no dependency on $D$.

\begin{corollary}[Global convergence of X-REGO for functions with low effective dimension]\label{cor:glrate_LED}
	Suppose that $\mathcal{X} = \mathbb{R}^D$, that Assumptions \ref{ass:f_is_Lipschitz}, \ref{assump: prob_of_I_R>rho} and \ref{ass:AREGO_fun_eff_dim} hold, and that $\tilde{\mvec{p}}^k$ satisfies $\rVert \mtx{U}^T \tilde{\mvec{p}}^k - \mtx{U}^T \mvec{x}^*\rVert \leq R_{\max}$ for all $k$ and for some suitably chosen $R_{\max}$. Suppose also that $d_{lb} \leq d_e$, let $\epsilon > 0$ be an accuracy tolerance, and let $k_{\max} = d_e-d_{lb}+1$ be the index of the first embedding with dimension $d_e$. Then
	\[ \prob[f(\mvec{x}_{opt}^{k_{\max}}) \leq f^* + \epsilon] > \rho^{k_{\max}} \]
	where $\mvec{x}_{opt}^k$ is defined in \eqref{eq: xoptk} and $\rho^k$ is the probability of success of the solver for \eqref{prob: AREGO_subproblem_re} (see \Cref{assump: prob_of_I_R>rho}). In particular, if the reduced problem is solved exactly, then $f(\mvec{x}_{opt}^{k_{\max}}) \leq f^* + \epsilon$ with probability one. For $1 \leq k < k_{\max}$, we have
	\[ \prob[f(\mvec{x}_{opt}^{k}) \leq f^* + \epsilon] \geq 1 - (1-\rho_{lb} \tau_{lb})^k\]
	where $\tau_{lb} = \tau(r_{\min}^{\eff},d_{lb},d_e)$, with $\tau(\cdot,\cdot,\cdot)$ defined in \eqref{def:tau_p} and $r_{\min}^{\eff} = (\epsilon-\lambda)/(L R_{\max})$.
\end{corollary}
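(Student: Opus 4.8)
The plan is to exploit the dimension schedule of \Cref{alg:LEDXREGO}: since $d^1 = d_{lb}$ and $d^{k+1} = d^k+1$, we have $d^k = d_{lb}+k-1$, so $d^k < d_e$ for every $k < k_{\max} := d_e-d_{lb}+1$, while $d^{k_{\max}} = d_e$. Accordingly I would split the proof into two regimes handled by two different tools: the ``warm-up'' embeddings of dimension strictly below $d_e$, controlled by the conic integral geometry bound already packaged in \Cref{cor:glconv_LED}, and the decisive $k_{\max}$-th embedding of dimension exactly $d_e$, controlled by \Cref{thm:Wang}. Throughout, the hypothesis $\mathcal{X} = \mathbb{R}^D$ is what makes \Cref{thm:Wang} (and the reformulation in \Cref{lem:Gstarteps_contained_in_Geps}) available.

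For the claims at $k = k_{\max}$, I would apply \Cref{thm:Wang} with $\mtx{A} = \tilde{\mtx{A}}^{k_{\max}}$ (a $D\times d_e$ Gaussian matrix) and $\mvec{p} = \tilde{\mvec{p}}^{k_{\max}-1} \in \mathbb{R}^D$: with probability one there is $\mvec{y}^*\in\mathbb{R}^{d_e}$ with $f(\tilde{\mtx{A}}^{k_{\max}}\mvec{y}^* + \tilde{\mvec{p}}^{k_{\max}-1}) = f^*$, hence $\tilde{f}^{k_{\max}}_{min} = f^*$ almost surely, i.e. $R^{k_{\max}} = 1$ a.s. in the sense of \eqref{eq: Rk}. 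If moreover the solver returns $\tilde{\mvec{y}}^{k_{\max}}$ satisfying \eqref{eq:approxf} with accuracy $\lambda\in(0,\epsilon)$ --- which, by \Cref{assump: prob_of_I_R>rho}, has (conditional) probability at least $\rho^{k_{\max}}$ --- then $f(\tilde{\mvec{x}}^{k_{\max}}) \leq \tilde{f}^{k_{\max}}_{min} + \lambda = f^*+\lambda \leq f^*+\epsilon$, so $f(\mvec{x}_{opt}^{k_{\max}}) \leq f^*+\epsilon$. Taking expectations yields $\prob[f(\mvec{x}_{opt}^{k_{\max}}) \leq f^*+\epsilon] \geq \rho^{k_{\max}}$, and the strict inequality follows by additionally accounting for the positive probability of success at an earlier embedding (from the bound in the next paragraph applied with $k=k_{\max}-1$) and/or of the solver returning an $\epsilon$-minimizer of \eqref{eq: GO} that fails to be a $\lambda$-accurate solution of \eqref{prob: AREGO_subproblem_re}; neither of these events is contained in $\{S^{k_{\max}}=1\}$. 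When \eqref{prob: AREGO_subproblem_re} is solved exactly, $f(\tilde{\mvec{x}}^{k_{\max}}) = \tilde{f}^{k_{\max}}_{min} = f^*$ a.s., which is the ``probability one'' statement.

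For the bound at $1 \leq k < k_{\max}$, all such embeddings have $d^k < d_e$, so \Cref{cor:glconv_LED} applies and certifies \Cref{ass:lowerbdRPK} with the uniform constant $\tau_{lb} = \tau(r_{\min}^{\eff},d_{lb},d_e) > 0$, $r_{\min}^{\eff} = (\epsilon-\lambda)/(LR_{\max})$; together with \Cref{assump: prob_of_I_R>rho} this places us in the setting of \Cref{thm: glconv}. Its proof (in \Cref{sec:app_proof_thm65}) establishes as its key intermediate estimate the per-embedding bound
\[ \prob[\mvec{x}_{opt}^k \in G_{\epsilon}] \geq 1 - (1-\tau_{lb}\rho_{lb})^k, \]
obtained by conditioning successively on $\mathcal{F}^{k-1}$ and $\mathcal{F}^{k-1/2}$ --- integrating out the $(\epsilon-\lambda)$-success event $R^k$ first via \eqref{ineq: cond_exp>tau}, then the solver-success event $S^k$ via \Cref{assump: prob_of_I_R>rho} --- and multiplying the per-step failure probabilities $1-\tau^k\rho^k \leq 1-\tau_{lb}\rho_{lb}$. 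Specializing to $1 \leq k < k_{\max}$ gives exactly the stated inequality, and (as noted above) also furnishes the extra mass needed for the strict inequality in the first part.

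I do not expect a serious obstacle: the genuinely new content over \Cref{thm: glconv}/\cite{Cartis2020b} is only the hybrid argument --- replacing, at the last embedding, the conic-geometry lower bound $\tau_{lb}$ by the \emph{exact} probability-one success of \Cref{thm:Wang}, and checking that $d^k = d_{lb}+k-1$ makes $k_{\max} = d_e-d_{lb}+1$ the first index with $d^k = d_e$. The only point requiring care is the filtration bookkeeping behind the product bound $(1-\tau_{lb}\rho_{lb})^k$ (the reason the intermediate $\sigma$-algebra $\mathcal{F}^{k-1/2}$ is introduced), but this is already isolated and proved in \Cref{thm: glconv}; and one must confirm that \Cref{cor:glconv_LED} delivers \Cref{ass:lowerbdRPK} \emph{uniformly} over all relevant embeddings, which it does via the $\R^{d_e}$-ball reformulation of \Cref{lem:Gstarteps_contained_in_Geps}.
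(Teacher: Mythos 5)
Your proposal is correct and follows essentially the same route as the paper: invoke \Cref{cor:glconv_LED} to certify \Cref{ass:lowerbdRPK} with $\tau_{lb}=\tau(r_{\min}^{\eff},d_{lb},d_e)$ for the embeddings of dimension below $d_e$, extract the finite-$k$ product bound $\prob[\mvec{x}^K_{opt}\in G_\epsilon]\geq 1-\Pi_{k=1}^K(1-\tau^k\rho^k)$ from the proof of \Cref{thm: glconv}, and replace $\tau^{k_{\max}}$ by $1$ via \Cref{thm:Wang} at the first embedding of dimension $d_e$. Your justification of the strict inequality (the residual success probability from earlier embeddings, i.e.\ $\Pi_{k=1}^{k_{\max}-1}(1-\tau^k\rho^k)<1$) is exactly the mechanism the paper uses.
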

\begin{proof} 
Note that, by \Cref{cor:glconv_LED}, \Cref{thm: glconv} applies. However, since we are interested in finite termination results, we do not directly use \Cref{thm: glconv}; instead, we extract the following claim from its convergence proof, see \eqref{eq:prob[x_opt_in_G_eps>1-(1-tr)^K]}. For all $K\geq 1$,
	\begin{equation}
		\prob[\{ \mvec{x}^K_{opt} \in G_{\epsilon} \}] \geq 1 - \Pi_{k = 1}^K (1- \tau^k \rho^k). 
	\end{equation}
	It follows that 
	\begin{equation}
		\prob[\{ \mvec{x}^K_{opt} \in G_{\epsilon} \}] \geq 1 - (1- \tau_{lb} \rho_{lb})^K,
	\end{equation}
	where $\tau_{lb}$ and $\rho_{lb}$ are computed/defined in \Cref{cor:glconv_LED} and \Cref{assump: prob_of_I_R>rho}, respectively. Finally, if $K \geq k_{\max}$, it follows that $d^K \geq d_e$, so that the probability of \eqref{prob: AREGO_subproblem} to be $(\epsilon-\lambda)$-successful is equal to one according to \Cref{thm:Wang}. So, if $K \geq k_{\max}$, 
		\begin{equation}
		\prob[\{ \mvec{x}^K_{opt} \in G_{\epsilon} \}] \geq 1 - (1-\rho^{K}) \Pi_{k = 1}^{K-1} (1- \tau^k \rho^k) > 1 - (1-\rho^{K}),
	\end{equation}
which concludes the proof.

\end{proof}

\section{Numerical experiments}
\label{sec: Numerics}

Let us illustrate the behavior of X-REGO on a set of benchmark global optimization problems whose objectives have low effective dimension. We show empirically that \Cref{alg:LEDXREGO} simultaneously manages to accurately estimate the effective dimension of the problem, and outperforms significantly (and especially in the high-dimensional regime) the no-embedding framework, in which the original problem \eqref{eq: GO} is solved directly, with no exploitation of the special structure.

\subsection{Setup}

\paragraph*{Test set.} Our synthetic test set is very similar to the one we used in \cite{Cartis2020, Cartis2020b}, and contains a set of benchmark global optimization problems adapted to have low effective dimensionality in the objective as explained in \Cref{app:prob}. Our test set is made of 18 $D$-dimensional functions with low effective dimension, with $D =10, 100$ and $1000$. These $D$-dimensional functions are constructed from 18 low-dimensional global optimization problems with known global minima (some of which are in the Dixon-Szego test set \cite{Dixon-Szego1975}), by artificially adding coordinates and then applying a rotation so that the effective subspace is not aligned with the coordinate axes.

\paragraph*{Solver.} The reduced problems are solved using the KNITRO solver (\cite{Byrd2006}). Note that, by default, KNITRO is a local solver, but switches to a global solver by activating its multistart feature. We therefore consider three ``KNITRO''-type solvers: local KNITRO (no multistart used, referred to as KNITRO), and multistart KNITRO with a low/high number of starting points (cheap or expensive versions of multistart KNITRO, referred to here as ch-mKNITRO and exp-mKNITRO, respectively). The higher the number of starting points, the more likely the solver is to find a global minimizer of the reduced problem. See \Cref{table: solvers_descriptions} for a detailed description of the settings of the different solvers. 

\begin{table}[!h]
	\centering
	\footnotesize
	\caption{The table outlines the experimental setup for the solvers, used both in the `no embedding' algorithm and for solving the low-dimensional problem \eqref{prob: AREGO_subproblem_re}.}
	\label{table: solvers_descriptions}
	\begin{tabular}{p{2.8cm}|p{2.8cm}|p{4.7cm}|p{2.8cm}}
		&  exp-mKNITRO  & ch-mKNITRO &  KNITRO \\ \hline
		 \mbox{Measure of} computational cost
		 & function evaluations & function evaluations  & function evaluations \\ \hline
		 \mbox{Max. budget} to solve \eqref{prob: AREGO_subproblem_re} & $\min(200,10d^k)$ starting points & $\min(100,2d^k)$ starting points & $1$ starting point\\ \hline
		 \mbox{Max. budget} to solve  \eqref{eq: GO} & $\min(200,10D)$ starting points (only used for the \emph{no-embedding} framework) & $\min(100,2D)$ starting points (only used for the \emph{no-embedding} framework) & 1 starting point (only used for the \emph{no-embedding} framework)  \\ \hline
		Termination for \eqref{prob: AREGO_subproblem_re} and \eqref{eq: GO} & Default options (unless overwritten by additional options) & Default options (unless overwritten by additional options) & Default options (unless overwritten by additional options)  \\ \hline
		 Additional options for \eqref{prob: AREGO_subproblem_re} and \eqref{eq: GO} & $\verb|ms_enable|=1$, $\verb|ms_bndrange|=2$  & $\verb|ms_enable|=1$, $\verb|ms_bndrange|=2$,  $\verb|ms_maxsolves|=\min(100,2d^k)$ (for \eqref{prob: AREGO_subproblem_re}), $\verb|ms_maxsolves|=\min(100,2D)$ (for \eqref{eq: GO}) &  / \\ 
	\end{tabular}
\end{table}

\paragraph*{Algorithms using a global solver (ch-mKNITRO and exp-mKNITRO).}  We test two different instances of the algorithmic framework presented in \Cref{alg:LEDXREGO}  against the \textit{no-embedding} framework, in which \eqref{eq: GO} is solved directly without using any random embedding and with no explicit exploitation of its special structure. For each instance, we let $d_{lb} = 1$. Since the effective dimension of the problem is assumed to be unknown, termination in \Cref{alg:LEDXREGO} is defined as the first embedding on which either stagnation is observed in the computed optimal cost of the reduced problem \eqref{prob: AREGO_subproblem_re}, or if not, until $d^k = D$. Objective stagnation is measured as follows: stop after $k_f$ embeddings, where $k_f$ is the smallest $k \geq 2$ that satisfies
\begin{equation} \label{eq: stopCrit}
	\left\rvert f(\wt{\mtx{A}}^{k} \tilde{\mvec{y}}^{k} + \tilde{\mvec{p}}^{k-1})-f(\wt{\mtx{A}}^{k-1} \tilde{\mvec{y}}^{k-1} + \tilde{\mvec{p}}^{k-2}) \right\rvert \leq \gamma = 10^{-5}.
\end{equation}
If $k_f \leq D$, we let $d_e^{\mathrm{est}} := k_f-1$ be our estimate of the effective dimension of the problem. Indeed, by \Cref{thm:Wang},  two random problems of dimension $d$ and $d+1$ with $d \geq d_e$ have the same optimal cost with probability one, so that the left-hand side of \eqref{eq: stopCrit} would be zero if the reduced problems were solved exactly (i.e., under the assumption of an ideal solver). We argue that, on the other hand, it is very unlikely that two random reduced problems of dimension $d$ and $d+1$ with $d < d_e$ have the same optimal cost.\footnote{Admittedly, when optimizing difficult functions, for example that are flat almost everywhere and very steep around the minimizer, it could happen that two successive reduced problems of dimension $d$ and $d+1$, with $d < d_e$, have the same optimal cost though none of them intersects the set of $\epsilon$-minimizers; we exclude here such pathological cases.} We therefore terminate the algorithm after either $k = k_f$ (if there exists $k_f\leq D$ satisfying \eqref{eq: stopCrit}), or else $k = D$ random embeddings. Regarding the choice of $\mvec{p}^k$, we consider two possibilities: either $\mvec{p}^k$ is a vector that does not depend on $k$, or $\mvec{p}^k$ is the best point found over the $k$ first embeddings (i.e., $\mvec{p}^k = \mvec{x}_{opt}^k$).

\paragraph{Algorithms relying on a local solver (KNITRO) and a resampling strategy.} We also compare several instances of \Cref{alg:LEDXREGO} with the no-embedding framework when the reduced problem is solved using a local solver. Note that due to the possible nonconvexity of the problem, running a local solver on the original problem is not expected to find the global minimizer; results combining the no-embedding framework with a local solver are thus only reported for comparison. Recall also that our convergence analysis requires the solver to be able to find an approximate global minimizer of the subproblem with a sufficiently high probability. We show numerically that local solvers can be used when the points $\mvec{p}^{k}$ are suitably chosen to globalize the search; we typically let $\mvec{p}^{k}$, for some indices $k$, be a random variable with a sufficiently large support to contain a global minimizer of \eqref{eq: GO}. Similarly as with global solvers, let $k_f$ be the smallest $k \geq 2$ that satisfies \eqref{eq: stopCrit}, and, if $k_f \leq D$, let $d_e^{\mathrm{est}} := k_f-1$ be our estimate of the effective dimension of the problem. However, since the solver is local, we cannot assume that \eqref{eq: stopCrit} implies that we found an approximate global minimizer of the original problem \eqref{eq: GO}. We therefore continue the optimization, fixing the subspace dimension: $d^k = d_e^{\est}$ for all $k > k_f$, and assuming that $\mvec{p}^k$ will be such that the next random subspace will leave the basin of attraction of the actual local minimizer. To prevent against local solutions, we use a stronger stopping criterion: the algorithm is stopped either after $D$ embeddings, or earlier, when $k > k_f$ and when the computed optimal cost of the reduced problem did not change significantly over the last $n_{\mathrm{stop}}$ random embeddings, i.e., if  \begin{equation}\label{eq: stopCritLoc}
    f(\mvec{x}_{opt}^{k-n_{\mathrm{stop}}+1}) -f(\mvec{x}_{opt}^k) \leq \gamma = 10^{-5}.
\end{equation}
In our experiments, we considered two possibilities: $n_{\mathrm{stop}} = 3$ or $n_{\mathrm{stop}} = 5$. Here again, we consider two main strategies for choosing $\mvec{p}^k$: either $\mvec{p}^k$ does not depend on $k$ (e.g., $\mvec{p}^k$ is an identically distributed random variable, for all $k$), or $\mvec{p}^k$ is the best point found over the past embeddings ($\mvec{p}^k = \mvec{x}_{opt}^k)$, resampling $\mvec{p}^k$ at random in a sufficiently large domain for some values of $k$, see below.

\paragraph{Summary of the algorithms:}
In total, we compare four instances of \Cref{alg:LEDXREGO}, that correspond to specific choices of  $\mvec{p}^k$, $k \geq 0$, and on the choice of a local/global solver. 
\begin{itemize}
	\item[-] Adaptive X-REGO (A-REGO). In \Cref{alg:LEDXREGO}, the reduced problem is solved using a global solver and the point $ \mvec{p}^k$ is chosen as the best point found\footnote{ If the reduced problem \eqref{prob: AREGO_subproblem_re} is solved using a global solver, then $f( \mvec{\tilde{p}}^k) \leq f(\mvec{\tilde{p}}^{k-1})$ since $\mvec{\tilde{p}}^{k-1}$ belongs to the search space of \eqref{prob: AREGO_subproblem_re}, so that we are indeed keeping the best point found so far. If we are using a local solver, we always initialize $\mvec{y} = \mvec{0}$ when solving \eqref{prob: AREGO_subproblem_re}, so that the same conclusion holds.}  up to the $k$th embedding:  $\mvec{p}^{k} := \mtx{A}^k \mvec{y}^k+\mvec{p}^{k-1}$.
	\item[-] Local adaptive X-REGO (LA-REGO). In \Cref{alg:LEDXREGO}, the reduced problem \eqref{prob: AREGO_subproblem_re} is solved using a local solver (instead of global as in A-REGO). Until we find the effective dimension (i.e., for $k < k_{f}$), we use the same update rule for $\mvec{p}^k$ as in A-REGO: $\mvec{p}^{k} := \mtx{A}^k \mvec{y}^k+\mvec{p}^{k-1}$. For the remaining embeddings, the point $\mvec{p}^k$ is chosen as follows: $\mvec{p}^k = \mtx{A}^k \mvec{y}^k + \mvec{p}^{k-1}$ if $|f(\mtx{A}^k \mvec{y}^k + \mvec{p}^{k-1}) - f(\mvec{p}^{k-1})| > \gamma = 10^{-5}$, and $\mvec{p}^k$ is draw uniformly in $[-1,1]^D$ otherwise, to compensate for the local behavior of the solver\footnote{We know, from the way we have constructed the test set, that for each problem there exists a global minimizer that belongs to $[-1,1]^D$.}.
	\item[-] Nonadaptive X-REGO (N-REGO). In \Cref{alg:LEDXREGO}, the reduced problem is solved globally, and all the random subspaces are drawn at some fixed point: $\mvec{p}^k = \mvec{a}$. The fixed value $\mvec{a}$ is simply defined as a realization of a random variable uniformly distributed in $[-1,1]^D$.\footnote{One could take simply $\mvec{p} = \mvec{0}$, but due to the way we have constructed the problem set, setting $\mvec{p} = \mvec{0}$ may give some advantage to the algorithm, so we let $\mvec{p}^k = \mvec{a}$ instead, where $\mvec{a}$ is a random variable drawn once at the beginning of the algorithm.} 
	\item[-] Local nonadaptive X-REGO (LN-REGO). In \Cref{alg:LEDXREGO}, the reduced problem \eqref{prob: AREGO_subproblem_re} is solved using a local solver. Until we find the effective dimension (i.e., for $k < k_f$), we set $\mvec{p}^{k} = \mvec{a}$, with $\mvec{a}$  as in N-REGO. For $k \geq k_f$, $\mvec{p}^k$ is a random variable distributed uniformly in $[-1,1]^D$ (and resampled at each embedding), to compensate for the local behavior of the solver.
\end{itemize}

 Note that, regarding the choice of $\mvec{p}^k$ when using a local solver, we typically have two phases. In the first phase, we apply the same selection rules for $\mvec{p}^k$, $k < k_f$, as when using a global solver. For $k \geq k_f$, we allow resampling to avoid the algorithm to be trapped at a local minimizer. We do not introduce some resampling in the first phase, because then stochasticity would impact the criterion \eqref{eq: stopCrit} and our estimate of the effective dimension of the problem.

\paragraph*{Experimental setup.} 
For each algorithm described above, we solve the entire test set 3 times to estimate the average performance of the algorithms, and record the computational cost, which we measure in terms of function evaluations (the termination criterion is described above). Note that from the four algorithms described above, we get six different algorithms, since algorithms A-REGO and N-REGO are endowed with two different global solvers: exp-mKNITRO and ch-mKNITRO, corresponding respectively to a low and large number of starting points. To compare with `no-embedding', we solve the full-dimensional problem \eqref{eq: GO} directly with the corresponding solver with no use of random embeddings. The budget and termination criteria used to solve \eqref{prob: AREGO_subproblem_re} within X-REGO or to solve \eqref{eq: GO} in the `no-embedding' framework are the default ones, summarized in \Cref{table: solvers_descriptions}.

\begin{rem}
 All the experiments were run in MATLAB on the 16 cores (2$\times$8 Intel with hyper-threading) Linux machines with 256GB RAM and 3300 MHz speed. 
\end{rem}

We present the main numerical results using \citeauthor{Dolan2002}'s performance profile \cite{Dolan2002} --- a popular framework to compare the performance of optimization algorithms applied to a given test set. For a given algorithm $\mathcal{A}$, and for each function $f$ in the test set $\mathcal{S}$, we define
$$N_f(\mathcal{A}) := \text{min. \# of fun.~evals required by the algorithm  to converge}.$$
If $\mathcal{A}$ fails to successfully converge to a $\epsilon$-minimizer of $f$, with $\epsilon  = 10^{-3}$, within the maximum computational budget, we set $N_f(\mathcal{A}) = \infty$. We further define 
$$ N_f^* := \min_{\mathcal{A}} N_f(\mathcal{A}),$$
as the minimal computational cost required by any algorithm to optimize $f$. We normalize all the computational costs by $N_f^*$ and, for each $\mathcal{A}$, we plot a function $\pi_{\mathcal{A}}(\alpha)$ that computes the proportion of $f$'s in the test set $\mathcal{S}$, for which the normalized computational effort spent by $\mathcal{A}$ was less than $\alpha$. Mathematically speaking,
$$ \text{$\pi_{\mathcal{A}}(\alpha) := \frac{|\{f: N_f(\mathcal{A}) \leq \alpha N_f^*\}|}{|\mathcal{S}|} $ for $\alpha \geq 1$,}$$
where $|\cdot|$ denotes the cardinality of a set. The algorithm $\mathcal{A}$ is considered to have achieved better performance if it produces higher values for $\pi_{\mathcal{A}}(\alpha)$ for lower values of $\alpha$, i.e., on figures, the curve $\pi_{\mathcal{A}}(\alpha)$ is higher and lefter.

\subsection{Numerical results}

\paragraph{Comparison of X-REGO with the no-embedding framework.} The comparison between the above-mentioned instances of X-REGO and the no-embedding framework is given in \Cref{fig:KNITRO}. A-REGO and N-REGO clearly outperform the no-embedding framework in terms of accuracy vs computational cost, especially for large $D$. Reducing the number of starting points in the multistart strategy (i.e., replacing exp-mKNITRO by ch-mKNITRO) allows to further significantly improve the performance, though the total proportion of problems ultimately solved is slightly decreased compared to exp-mKNITRO. Note also that the use of a local solver (LA-REGO and LN-REGO) outperforms both global X-REGO instances and the no-embedding framework, especially for large $D$. They find the global minimizer in a significantly higher number of subproblems than when directly addressing the original high-dimensional problem with the local solver: the resampling strategy for $\mvec{p}^{k}$ described above helps to globalize the search. \Cref{tab:nemb} contains the average, over the test problems, of the number of embeddings used per algorithm; note that for (approximately) global solvers, and especially using $\mvec{p}^{k} = \mvec{x}_{opt}^k$, the average number of embeddings is very close to the ideal $k_f$. Indeed, the average effective dimension on our problem sets is equal to 3.7, so the ideal average number of embeddings should be 4.7, as we need an additional embedding for the stopping criterion \eqref{eq: stopCrit} to be satisfied. For local solvers, the average number of embeddings is slightly higher due to the need to resample candidate solutions to globalize the search and due to the stronger stopping criterion.

\begin{figure}[h!]
	\centering
	\includegraphics[trim = 100 20 0 20, clip, scale=0.8]{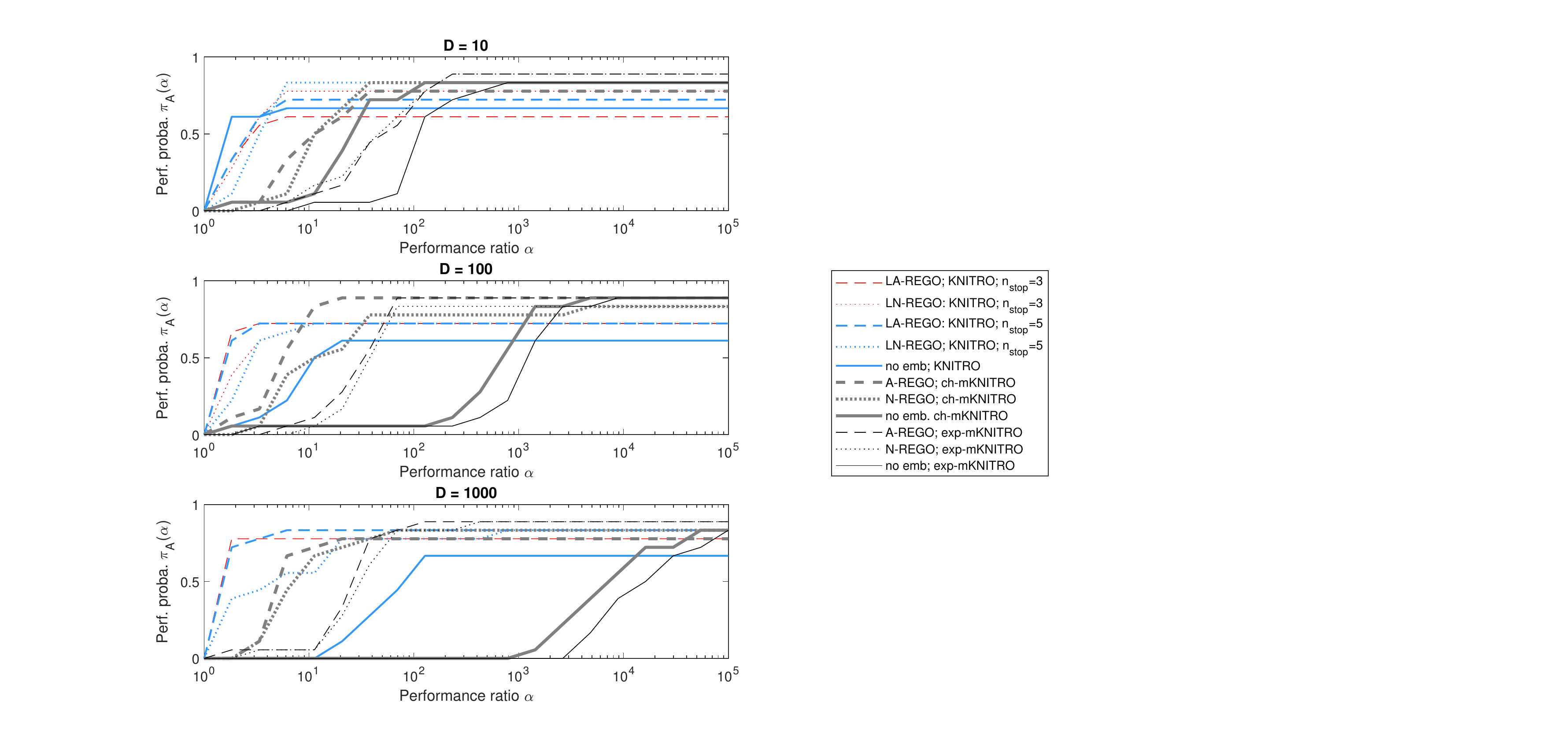}
	\caption{Comparison between the X-REGO algorithms and `no-embedding' with KNITRO. Each algorithm was run three times on the whole dataset; since all three runs returned similar curves, we display only one of them.}
	\label{fig:KNITRO}
\end{figure}

\begin{table}[h!]
	\centering
	\footnotesize
    \caption{Average number of embeddings per problem, estimated from 3 independent runs of the algorithms on the test set.}
	\label{tab:nemb}
    \begin{tabular}{c|c|c|c|c|c|c|c|c}
    & \multicolumn{2}{c|}{KNITRO $(n_{\mathrm{stop}} = 3)$} & \multicolumn{2}{c|}{KNITRO $(n_{\mathrm{stop}} = 5)$} & \multicolumn{2}{c|}{ch-mKNITRO} & \multicolumn{2}{c}{exp-mKNITRO} \\
         &  LA-REGO & LN-REGO & LA-REGO & LN-REGO & A-REGO & N-REGO & A-REGO & N-REGO\\
         \hline
        $D = 10$ &  5.96 & 6.59 & 7.87 & 8.07 & 4.78 & 5.19 & 4.70 & 4.89 \\
        $D = 100$ & 6.41 & 7.81 & 9.09 & 9.33 & 4.76 & 8.94 & 4.70 & 5.63 \\
        $D = 1000$ &6.26 & 9.15 & 8.54 & 10.83& 4.67 & 7.17 & 4.67 & 6.67 \\
    \end{tabular}
\end{table}

\paragraph{Estimation of the effective dimension.}
As described earlier, instances of X-REGO naturally provide an estimate $d_e^{\est}$ of the effective dimension of the problem: $d_e^{\est} = k_f-1$, where $k_f$ is the smallest integer that satisfies \eqref{eq: stopCrit}. In case there exists no $k_f \leq D$ satisfying \eqref{eq: stopCrit}, we set $d_e^{\est} = D$. For several instances of \Cref{alg:LEDXREGO}, Table \ref{tab:estimatingde} reports the number of problems of the data set on which $d_e^{\est} \in [d_e, d_e+2]$, where $d_e$ is the exact effective dimension of the problem, for $D = 10$, $D = 100$ and $D = 1000$. Typically, adaptive choices of $\mvec{p}^k$ results in a slightly larger estimate of the effective dimension; we also note that the use of a local solver is comparable to a global one regarding the ability of the algorithm to estimate the effective dimension on this problem set when $\mvec{p}^k$ is chosen adaptively, and significantly lower otherwise. The values given in \Cref{tab:estimatingde} have been averaged over three independent runs of our experiment, on the whole dataset, to account for randomness in the algorithms. 

\begin{table}[h!]
	\centering
	\footnotesize
    \caption{Percentage of problems for which the estimated effective dimension lies in the interval $[d_e, d_e+2]$ , where $d_e$ is the true effective dimension of the problem.}
	\label{tab:estimatingde}
    \begin{tabular}{c|c|c|c|c|c|c}
         &  LA-REGO & LN-REGO & A-REGO & N-REGO & A-REGO & N-REGO\\
         & KNITRO & KNITRO & ch-mKNITRO & ch-mKNITRO & exp-mKNITRO & exp-mKNITRO \\
         \hline
        $D = 10$ & 94.44 & 79.63 & 94.44 & 83.33 & 94.44 & 88.89 \\
        $D = 100$ & 94.44 & 68.52 & 94.44 & 79.63 & 94.44 & 85.19\\
        $D = 1000$ & 94.44 & 66.67 & 88.89 & 81.48 & 90.74 & 85.19\\
    \end{tabular}
\end{table}

\paragraph{What if we know the effective dimension of the problem?} In the favorable situation when the effective dimension $d_e$ of each problem is known, we can set $d_{lb} = d_e$ in \Cref{alg:LEDXREGO}, and theoretically, for an ideal global solver, \Cref{alg:LEDXREGO} is guaranteed to solve exactly the original problem using one embedding. \Cref{fig:1emb} explores numerically the validity of this claim. We compare several instances of X-REGO with corresponding counterparts, where the effective dimension is known. When using an (approximately) global solver (ch-mKNITRO or exp-mKNITRO), we stop \Cref{alg:LEDXREGO} after one embedding of dimension $d_e$. When the solver is local (KNITRO), we let \Cref{alg:LEDXREGO} explore several embeddings of dimension $d_e$, and stop the algorithm when \eqref{eq: stopCritLoc} is satisfied, with $n_{\mathrm{stop}} = 3$, or otherwise after 50 embeddings. \Cref{fig:1emb} shows the corresponding performance profiles, when comparing these strategies with the ones presented on \Cref{fig:illustr_low_effective_dim}, and the corresponding no-embedding algorithms. In general, and except when using local solvers, knowing $d_e$ allows to solve a significant proportion of the problems in a considerably smaller time. Admittedly, these conclusions strongly depend on the probability of the solver to be successful, i.e., of the number of starting points of the multistart procedure. Note also than in our test set, the effective dimension is typically low (average value is 3.7), which might also decrease the benefit of knowing the effective dimension and thus avoiding to explore lower-dimensional subspaces; we expect the gap between \Cref{alg:LEDXREGO} and algorithms where $d_e$ is known to increase with the effective dimension of the problem.

\begin{figure}[h!]
    \centering
    \includegraphics[trim = 100 20 0 20, clip,scale = 0.8]{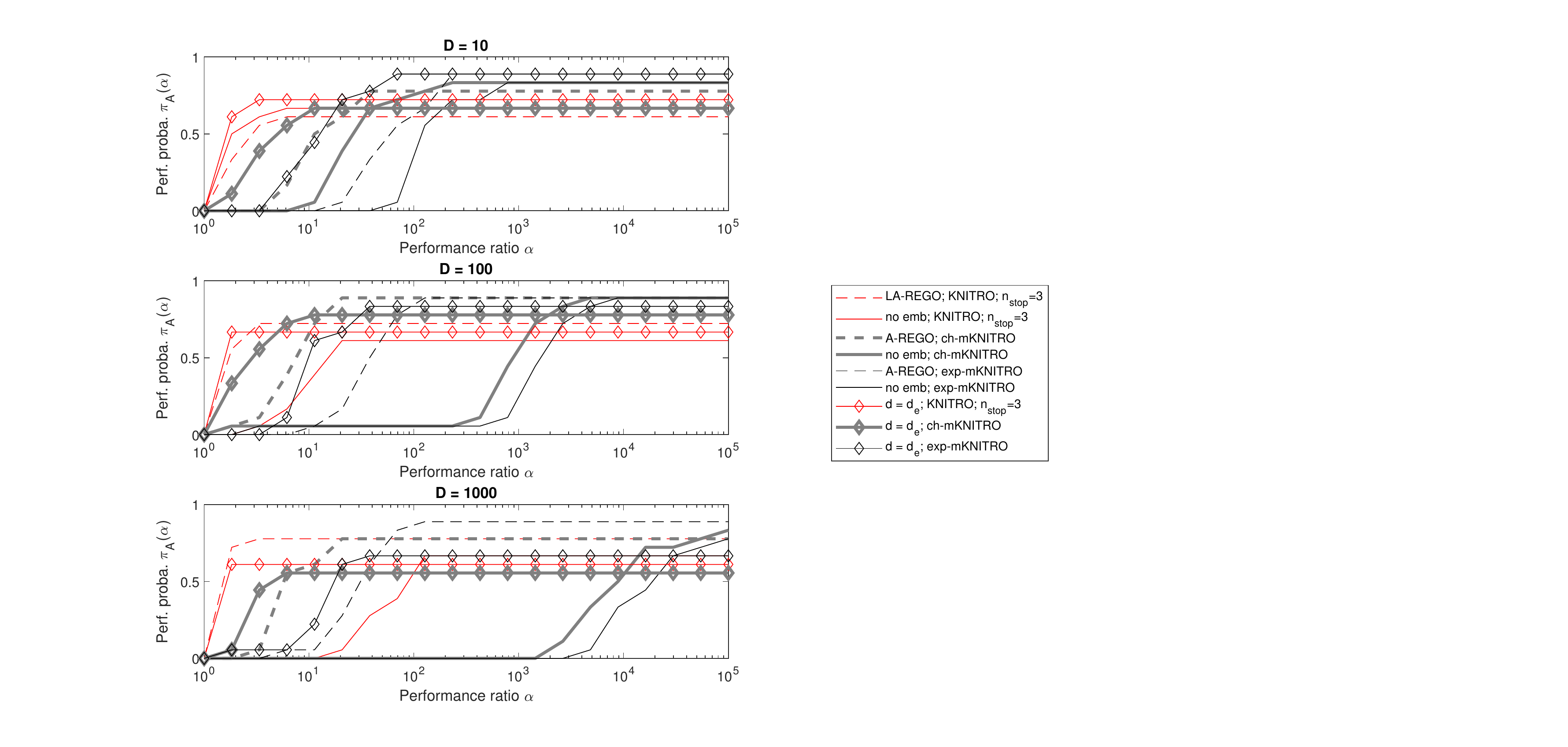}
    \caption{Comparison of X-REGO with no-embedding, and variants of X-REGO in which the random subspace dimension is equal to the effective dimension (assumed to be known).}
    \label{fig:1emb}
\end{figure}

\subsection{Conclusions to numerical experiments.}
We have compared several instances of \Cref{alg:LEDXREGO} with the no-embedding framework, where the original problem is addressed directly, with no use of random embeddings nor exploitation of the special structure. Overall, \Cref{alg:LEDXREGO} outperforms the no-embedding framework, and this observation becomes more and more apparent when the dimension of the original problem increases. We have also combined \Cref{alg:LEDXREGO} with a local solver; though our convergence theory does not cover this situation, we have shown that the resulting algorithm can outperform both the no-embedding framework and instances of \Cref{alg:LEDXREGO} relying on global solver when the parameters $\mvec{p}^k$ are sampled at random in a sufficiently large domain to ``globalize'' the search. Regarding the estimation of the effective dimension, we noticed that instances of \Cref{alg:LEDXREGO} relying on adaptive rules for selecting $\mvec{p}^k$ (A-REGO and LA-REGO) significantly outperform their fixed $\mvec{p}^{k}$ counterparts. Finally, we have shown that, in the favourable case when the effective dimension is known, letting $d_{lb} \geq d_e$ in \Cref{alg:LEDXREGO} leads to a substantial improvement in performance.

\section{Conclusions and future work}
We explored a generic algorithmic framework, X-REGO, for global optimization of Lipschitz-continuous functions.  X-REGO is based on successively generating  reduced problems \eqref{eq: AREGO}, where the parameter $\mvec{p}$ is flexibly-chosen. Flexibility in choosing $\mvec{p}$ allows the user to calibrate the level of exploration in $\mathcal{X}$. 

Our central result is the proof of global convergence of X-REGO, that heavily relies on an estimate of the probability of the reduced problem \eqref{eq: AREGO} to be $\epsilon$-successful. By looking at the reduced problem through the prism of conic geometry, we have developed a new type of analysis to bound the probability of $\epsilon$-success of \eqref{eq: AREGO}. The bounds are expressed in terms of the so-called conic intrinsic volumes of circular cones which have exact formulae and thus are quantifiable. Using these formulae, we analysed the asymptotic behaviour of the bounds for large $D$. The analysis suggests that the success rate of \eqref{eq: AREGO} (as expected) decreases exponentially with growing $D$. Confirming our intuition, the analysis also shows that \eqref{eq: AREGO} has a high success rate for larger $d$ and smaller distances between the location where subspaces are embedded (i.e., the point $\mvec{p}$) and the location of a global minimizer $\mvec{x}^*$. This latter property of \eqref{eq: AREGO} for general Lipschitz continuous functions is remindful of the dependence of the success rates of \eqref{eq: AREGO} for functions with low effective dimensionality on the distance between $\mvec{p}_{\top}$ and $\mvec{x}_{\top}^*$, see \cite{Cartis2020b}. Furthermore, to understand the relative performance of \eqref{eq: AREGO}, we compared it with a uniform sampling technique. We looked at lower bounds for the probability of $\epsilon$-success of the two techniques and found that the lower bound $\tau(r_{\mvec{p}},d,D)$ for \eqref{eq: AREGO} is greater than the lower bound $\tau_{us}$ for uniform sampling if the distance $\| \mvec{x}^* - \mvec{p} \|$ is smaller than $0.48\sqrt{D}$ in the asymptotic regime ($D \rightarrow \infty$). In the asymptotic analysis, the embedding subspace $d$ was kept fixed. The analysis showed that in this regime $d$ has no significant effect on the relative performance of \eqref{eq: AREGO}. Future research may involve comparison of the performances of  \eqref{eq: AREGO} and uniform sampling in different asymptotic settings, for example, when $d = \beta D$ for some fixed constant~$\beta$.    

Our derivations are conceptual in nature, exploring new connections of global optimization to other areas such as conic integral geometry. As an illustration, in the second part of the paper, we used our analysis to obtain lower bounds --- that are independent of $D$ --- for the probability of $\epsilon$-success of  \eqref{eq: AREGO} for functions with low effective dimensionality in the case $d < d_e$. This analysis is exploited algorithmically and allows lifting the restriction of needing to know $d_e$ for random embeddings algorithms for functions with low effective dimensionality. We tested the effectiveness of X-REGO numerically using global and local KNITRO for solving the reduced problem on a set of benchmark global optimization problems modified to have low effective dimensionality. We proposed different variants of X-REGO each corresponding to a specific rule for choosing $\mvec{p}$'s and contrasted them against each other and against the `no-embedding' framework in which the solvers were applied to \eqref{eq: GO} directly with no use of subspace embeddings. The results of the experiments showed that the difference in performance between X-REGO and `no-embedding' becomes more prominent for larger $D$, in favour of X-REGO. The results further suggest that the effectiveness of X-REGO, just like of REGO in \cite{Cartis2020}, is solver-dependent. In our experiments, the best results were achieved by the local solver. In the future, we plan to investigate the performance of X-REGO when applied to general objectives and compare it with popular global optimization solvers.

\bibliographystyle{plainnat}
{\footnotesize
	\bibliography{bibliography}}

\appendix
\section{Technical definitions and results}
\subsection{Gaussian random matrices}
\begin{definition}[Gaussian matrix] \label{def: Gaussian_matrix}
	A Gaussian (random) matrix is a matrix whose each entry is an independent standard normal random variable. 
\end{definition}
Gaussian matrices have been well-studied with many results available at hand; here, we mention the following result that we use in the analysis; for a collection of results pertaining to Gaussian matrices and other related distributions refer to \cite{Gupta1999, Vershynin2018}.

\begin{theorem} \label{thm: orthog_inv_of_Gaussian_matrices} (see \cite[Theorem 2.3.10]{Gupta1999})
	Let $\mtx{A}$ be a $D \times d$ Gaussian random matrix. If $\mtx{U} \in \mathbb{R}^{D \times p}$, $D \geq p $, and $\mtx{V} \in \mathbb{R}^{d \times q}$, $d \geq q$, are orthonormal, then $\mtx{U}^T\mtx{A}\mtx{V}$ is a Gaussian random matrix. 
\end{theorem}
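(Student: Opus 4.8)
The plan is to prove the statement directly from \Cref{def: Gaussian_matrix}, rather than relying on the citation. Write $\mtx{B} := \mtx{U}^T \mtx{A} \mtx{V} \in \mathbb{R}^{p \times q}$. Since each entry of $\mtx{A}$ is an independent $N(0,1)$ variable, every entry $B_{ij} = \sum_{k=1}^{D}\sum_{l=1}^{d} U_{ki} A_{kl} V_{lj}$ is a fixed linear combination of the independent Gaussians $A_{kl}$, hence Gaussian with mean zero; moreover the whole collection $\{B_{ij}\}_{i \leq p,\, j \leq q}$ is \emph{jointly} Gaussian, being a linear image of the Gaussian vector $\vc(\mtx{A})$. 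It therefore suffices to compute the covariances of the $B_{ij}$ and check that they equal $\delta_{ii'}\delta_{jj'}$; joint Gaussianity then upgrades ``uncorrelated'' to ``mutually independent'', which is exactly the claim.

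First I would carry out the covariance computation, using $\expv[A_{kl}A_{k'l'}] = \delta_{kk'}\delta_{ll'}$:
\[
\expv[B_{ij} B_{i'j'}] = \sum_{k,l}\sum_{k',l'} U_{ki} V_{lj} U_{k'i'} V_{l'j'}\, \expv[A_{kl}A_{k'l'}] = \sum_{k,l} U_{ki}U_{ki'}\, V_{lj}V_{lj'} = (\mtx{U}^T\mtx{U})_{ii'}\,(\mtx{V}^T\mtx{V})_{jj'}.
\]
Since $\mtx{U}$ and $\mtx{V}$ have orthonormal columns, $\mtx{U}^T\mtx{U} = \mtx{I}_p$ and $\mtx{V}^T\mtx{V} = \mtx{I}_q$, so $\expv[B_{ij}B_{i'j'}] = \delta_{ii'}\delta_{jj'}$. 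Hence each $B_{ij} \sim N(0,1)$, the $B_{ij}$ are pairwise uncorrelated, and — being jointly Gaussian — they are mutually independent. Thus $\mtx{B}$ is a $p\times q$ Gaussian matrix in the sense of \Cref{def: Gaussian_matrix}.

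An equivalent, slightly slicker route uses the vectorization identity $\vc(\mtx{U}^T\mtx{A}\mtx{V}) = (\mtx{V}^T \otimes \mtx{U}^T)\vc(\mtx{A})$ together with $\vc(\mtx{A}) \sim N(\mvec{0}, \mtx{I}_{Dd})$: the image is then $N\big(\mvec{0},\, (\mtx{V}^T\otimes\mtx{U}^T)(\mtx{V}\otimes\mtx{U})\big) = N\big(\mvec{0},\, (\mtx{V}^T\mtx{V})\otimes(\mtx{U}^T\mtx{U})\big) = N(\mvec{0},\mtx{I}_{pq})$ by the mixed-product property of the Kronecker product. I expect no substantive obstacle here; the only point requiring care is that one must invoke joint Gaussianity of \emph{all} entries of $\mtx{B}$ before deducing independence from the vanishing cross-covariances — pairwise uncorrelatedness alone would not suffice. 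The hypotheses $D \geq p$ and $d \geq q$ play no role beyond guaranteeing that orthonormal matrices $\mtx{U}, \mtx{V}$ of the stated shapes exist.
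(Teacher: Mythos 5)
Your proof is correct. Note that the paper itself does not prove this statement at all: it is imported verbatim from \cite[Theorem 2.3.10]{Gupta1999} with only a citation, so there is no in-paper argument to compare against. What you supply is a complete, self-contained verification from \Cref{def: Gaussian_matrix}: the entries of $\mtx{B}=\mtx{U}^T\mtx{A}\mtx{V}$ are jointly Gaussian as a linear image of $\vc(\mtx{A})\sim N(\mvec{0},\mtx{I}_{Dd})$, the covariance computation correctly yields $\expv[B_{ij}B_{i'j'}]=(\mtx{U}^T\mtx{U})_{ii'}(\mtx{V}^T\mtx{V})_{jj'}=\delta_{ii'}\delta_{jj'}$ under the orthonormal-columns hypothesis, and you rightly flag that joint Gaussianity is what licenses the passage from zero cross-covariance to mutual independence (pairwise uncorrelatedness of merely marginally Gaussian variables would not suffice). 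The Kronecker-product reformulation $\vc(\mtx{U}^T\mtx{A}\mtx{V})=(\mtx{V}^T\otimes\mtx{U}^T)\vc(\mtx{A})$ is a clean equivalent packaging of the same computation, and your observation that $D\geq p$, $d\geq q$ serve only to make the hypotheses non-vacuous is accurate. The reference's proof is essentially this same characteristic-function/covariance argument in the language of matrix-variate normal distributions, so your route is not conceptually different from the standard one—it simply makes the paper's black box explicit.
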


\section{Global convergence proof} \label{app:converg}
This section contains material already presented in \cite{Cartis2020b}, with minor changes to capture the fact that the lower bounds $\rho^k$ and $\tau^k$ are now variable with $k$ (or, in other words, the probability that the reduced problem \eqref{prob: AREGO_subproblem_re} is $\epsilon$-successful, as well as the probability that the solver finds a sufficiently accurate solution of the reduced problem, is changing with the dimension of the reduced problem $d^k$ in \Cref{alg:XREGO}). The following three lemmas are needed in our convergence proof. 
\begin{lemma}
	If \Cref{assump: prob_of_I_R>rho} holds, then 
	\begin{equation}\label{ineq: exp_IS_IR>tau*rho}
		\mathbb{E}[R^k S^k | \mathcal{F}^{k-1/2}]  \geq  \rho^k R^k, \quad {\rm for}\quad k\geq 1.
	\end{equation}
\end{lemma}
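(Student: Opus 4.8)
The plan is to exploit the measurability of $R^k$ with respect to the intermediate $\sigma$-algebra $\mathcal{F}^{k-1/2}$. Recall from the discussion preceding \Cref{assump: prob_of_I_R>rho} that $R^k$ is $\mathcal{F}^{k-1/2}$-measurable: the event ``\eqref{prob: AREGO_subproblem} is $(\epsilon-\lambda)$-successful'' depends only on $\mtx{A}^1,\dots,\mtx{A}^k$ and $\mvec{p}^0,\dots,\mvec{p}^{k-1}$, which all generate $\mathcal{F}^{k-1/2}$, whereas $S^k$ — encoding whether the solver returns a sufficiently accurate point $\mvec{y}^k$ of \eqref{prob: AREGO_subproblem_re} — is in general only $\mathcal{F}^k$-measurable. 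This is exactly why $\mathcal{F}^{k-1/2}$ was introduced: it augments $\mathcal{F}^{k-1}$ by the $k$th embedding matrix $\mtx{A}^k$ but not yet by the $k$th solve $\mvec{y}^k$.

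First I would pull $R^k$ out of the conditional expectation: since $R^k$ is $\mathcal{F}^{k-1/2}$-measurable and bounded, the ``taking out what is known'' property of conditional expectation gives
\[
\mathbb{E}[R^k S^k \mid \mathcal{F}^{k-1/2}] = R^k\,\mathbb{E}[S^k \mid \mathcal{F}^{k-1/2}].
\]
Next I would invoke \Cref{assump: prob_of_I_R>rho}, which states precisely that $\mathbb{E}[S^k \mid \mathcal{F}^{k-1/2}] \geq \rho^k$. Since $R^k$ is an indicator function, hence nonnegative, multiplying this inequality through by $R^k$ preserves it, yielding $R^k\,\mathbb{E}[S^k \mid \mathcal{F}^{k-1/2}] \geq \rho^k R^k$; chaining with the identity above gives \eqref{ineq: exp_IS_IR>tau*rho}.

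There is essentially no obstacle here beyond bookkeeping. The only point that requires care is the justification that $R^k$ is $\mathcal{F}^{k-1/2}$-measurable, which — as noted — was already established in the excerpt. One should also record that $\rho^k$ is deterministic (it may depend only on the pre-specified embedding dimension $d^k$), so it is trivially $\mathcal{F}^{k-1/2}$-measurable and the factorisation above is legitimate; and the argument is identical for every $k\geq 1$, so the stated range of $k$ is immediate.
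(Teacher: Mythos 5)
Your proposal is correct and follows exactly the paper's own argument: pull $R^k$ out of the conditional expectation using its $\mathcal{F}^{k-1/2}$-measurability, then apply \Cref{assump: prob_of_I_R>rho} and the nonnegativity of $R^k$. The extra remarks about why $R^k$ is $\mathcal{F}^{k-1/2}$-measurable and about $\rho^k$ being deterministic are accurate but not needed beyond what the paper already records.
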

\begin{proof}
	\Cref{assump: prob_of_I_R>rho} implies
	$$\mathbb{E}[R^k S^k | \mathcal{F}^{k-1/2}] = R^k  \mathbb{E}[ S^k | \mathcal{F}^{k-1/2}] \geq  \rho^k R^k,$$
	where the equality follows from the fact that $R^k$ is $\mathcal{F}^{k-1/2}$-measurable and, thus, can be pulled out of the expectation (see \cite[Theorem 4.1.14]{Durrett2019}).
\end{proof}
A useful property is given next. 
\begin{lemma}\label{lemma: lim_of_prob_is_1}
	Let \Cref{assump: prob_of_I_R>rho} and \ref{ass:lowerbdRPK} hold. Then, for $K\geq 1$, we have
	$$ \prob\Big[ \bigcup_{k=1}^K \left\{ \{R^k = 1\} \cap \{ S^k = 1 \} \right\} \Big] \geq 1 - \Pi_{k = 1}^K(1-\tau^k \rho^k). $$
\end{lemma}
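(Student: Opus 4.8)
The plan is to bound the probability of the complementary event, namely that $R^k S^k = 0$ for every $k \in \{1, \dots, K\}$, and to do so by peeling off one factor at a time using the tower property of conditional expectation with respect to the filtration $\mathcal{F}^0 \subseteq \mathcal{F}^{1/2} \subseteq \mathcal{F}^1 \subseteq \cdots$. Concretely, writing $E_k = \{R^k = 1\} \cap \{S^k = 1\}$, we have $\prob\big[\bigcup_{k=1}^K E_k\big] = 1 - \prob\big[\bigcap_{k=1}^K E_k^c\big]$, and $\mathds{1}\{\bigcap_{k=1}^K E_k^c\} = \prod_{k=1}^K (1 - R^k S^k)$ since $R^k S^k$ is an indicator. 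So it suffices to show $\expv\big[\prod_{k=1}^K (1-R^k S^k)\big] \leq \prod_{k=1}^K (1 - \tau^k \rho^k)$.

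First I would condition on $\mathcal{F}^{K-1/2}$: since the factors $(1-R^jS^j)$ for $j \leq K-1$ are $\mathcal{F}^{K-1/2}$-measurable (by the measurability remarks in the excerpt, $R^j, S^j$ are $\mathcal{F}^j$-measurable and $\mathcal{F}^j \subseteq \mathcal{F}^{K-1/2}$ for $j \leq K-1$), they pull out of the conditional expectation, leaving
\begin{equation}
\expv\Big[\prod_{k=1}^K (1-R^kS^k)\Big] = \expv\Big[\Big(\prod_{k=1}^{K-1}(1-R^kS^k)\Big)\,\expv\big[1-R^KS^K \,\big|\, \mathcal{F}^{K-1/2}\big]\Big].
\end{equation}
Now $\expv[1 - R^K S^K \mid \mathcal{F}^{K-1/2}] = 1 - \expv[R^K S^K \mid \mathcal{F}^{K-1/2}] \leq 1 - \rho^K R^K$ by the preceding lemma \eqref{ineq: exp_IS_IR>tau*rho}. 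Since $\prod_{k=1}^{K-1}(1-R^kS^k) \geq 0$, this gives the bound $\expv\big[\big(\prod_{k=1}^{K-1}(1-R^kS^k)\big)(1-\rho^K R^K)\big]$. Next, condition this on $\mathcal{F}^{K-1}$: the first $K-1$ factors are $\mathcal{F}^{K-1}$-measurable, and $\rho^K$ is a deterministic constant, so $\expv[1 - \rho^K R^K \mid \mathcal{F}^{K-1}] = 1 - \rho^K \expv[R^K \mid \mathcal{F}^{K-1}] \leq 1 - \rho^K \tau^K$ by \Cref{ass:lowerbdRPK} (using $\rho^K > 0$ and $\expv[R^K\mid\mathcal{F}^{K-1}] \geq \tau^K$). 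Again using nonnegativity of the remaining product, we obtain $\expv\big[\prod_{k=1}^K(1-R^kS^k)\big] \leq (1-\tau^K\rho^K)\,\expv\big[\prod_{k=1}^{K-1}(1-R^kS^k)\big]$.

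The induction on $K$ then closes the argument: the base case $K=1$ is exactly the two conditioning steps above applied directly (condition on $\mathcal{F}^{1/2}$, then on $\mathcal{F}^0$), and the inductive step is precisely the recursion just derived, yielding $\expv\big[\prod_{k=1}^K(1-R^kS^k)\big] \leq \prod_{k=1}^K(1-\tau^k\rho^k)$, hence $\prob\big[\bigcup_{k=1}^K E_k\big] \geq 1 - \prod_{k=1}^K(1-\tau^k\rho^k)$. The main subtlety — not a deep obstacle but the point requiring care — is the bookkeeping of which variables are measurable with respect to which $\sigma$-algebra at each peeling step, so that the "pull-out" of known factors is justified and the two-stage conditioning ($\mathcal{F}^{K-1/2}$ then $\mathcal{F}^{K-1}$) is legitimate; the nonnegativity of the partial products $\prod_{k=1}^{j}(1-R^kS^k)$, which is what lets us pass from the pointwise bound on the conditional expectation to a bound on the full expectation, must also be invoked explicitly.
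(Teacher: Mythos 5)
Your proposal is correct and follows essentially the same route as the paper's proof: rewrite the union's probability as $1-\expv\bigl[\prod_{k=1}^K(1-R^kS^k)\bigr]$, then peel off the last factor by two-stage conditioning on $\mathcal{F}^{K-1/2}$ (using the auxiliary bound $\expv[R^KS^K\mid\mathcal{F}^{K-1/2}]\geq\rho^K R^K$) and then on $\mathcal{F}^{K-1}$ (using \Cref{ass:lowerbdRPK}), and iterate. Your explicit attention to the measurability bookkeeping and to the nonnegativity of the partial products is exactly the care the paper's argument implicitly relies on.
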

\begin{proof}
	We define an auxiliary random variable,
	$	 J^K :=  \mathds{1} \left(\bigcup_{k=1}^K \left\{  \{R^k = 1\} \cap \{ S^k = 1 \} \right\} \right).  $
	Note that $J^K = 1- \prod_{k=1}^{K} (1-R^k S^k)$. We have
	\begin{align*}
		\prob\Big[ \bigcup_{k=1}^K \left\{ \{R^k = 1\} \cap \{ S^k = 1 \} \right\} \Big] &= \mathbb{E}[J^K]  = 1 - \mathbb{E}\Big[\prod_{k=1}^{K} (1-R^k S^k)\Big] \\
		& \stackrel{(*)}{=} 1 - \mathbb{E}\Big[\mathbb{E}\Big[ \prod_{k=1}^{K} (1-R^k S^k) \Big| \mathcal{F}^{K-1/2} \Big]\Big] \\
		& \stackrel{(\circ)}{=} 1 - \mathbb{E}\Big[ \prod_{k=1}^{K-1} (1-R^k S^k) \cdot \mathbb{E}\big[ 1 - R^K S^K | \mathcal{F}^{K-1/2} \big]\Big] \\
		& \geq 1 - \mathbb{E}\Big[(1-\rho^K R^K)\cdot \prod_{k=1}^{K-1} (1-R^k S^k)  \Big] \\
		& \stackrel{(*)}{=} 1 - \mathbb{E}\Big[\mathbb{E}\Big[(1-\rho^K R^K)\cdot \prod_{k=1}^{K-1} (1-R^k S^k)   \Big| \mathcal{F}^{K-1}\Big]\Big] \\
		& \stackrel{(\circ)}{=} 1 - \mathbb{E}\Big[ \prod_{k=1}^{K-1} (1-R^k S^k) \cdot \mathbb{E}\big[ 1 - \rho^K R^K | \mathcal{F}^{K-1} \big]\Big] \\
		& \geq 1 - (1-\tau^K \rho^K) \cdot \mathbb{E}\Big[ \prod_{k=1}^{K-1} (1-R^kS^k)\Big],
	\end{align*}
	where 
	\begin{itemize}
		\item[-]
		$(*)$ follow from the tower property of conditional expectation (see (4.1.5) in \cite{Durrett2019}), 
		\item[-]
		$(\circ)$ is due to the fact that $R^1, \dots, R^{K-1}$ and $S^1,\dots,S^{K-1}$ are $\mathcal{F}^{K-1/2}$-  \,and $\mathcal{F}^{K-1}$-measurable (see Theorem 4.1.14 in \cite{Durrett2019}), 
		\item[-]
		the inequalities follow from \eqref{ineq: exp_IS_IR>tau*rho} and \eqref{ineq: cond_exp>tau}, respectively. 
	\end{itemize}
	We repeatedly expand the expectation of the product for $K-1$, $\ldots$, $1$, in exactly the same manner as above, to obtain the desired result.
\end{proof}

In the next lemma, we show that if \eqref{prob: AREGO_subproblem} is $(\epsilon-\lambda)$-successful and  is solved to accuracy $\lambda$ in objective value, then the solution $\mvec{x}^k$ must be inside $G_{\epsilon}$.

\begin{lemma}\label{lemma: if WcapG then_x in G_epsilon}
	Suppose \Cref{assump: prob_of_I_R>rho} and \ref{ass:lowerbdRPK} hold. Then,
	$$
	\{R^k = 1\}\cap \{S^k = 1\} \subseteq \{\mvec{x}^{k}  \in G_{\epsilon}\}.
	$$

\end{lemma}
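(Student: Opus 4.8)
The plan is to unpack the two indicator events on the left and then apply the triangle-type inequality that chains $\epsilon$-success of the reduced problem with the accuracy of the solver, exactly as suggested by the motivating discussion in (a)--(b) before the statement. So I would start from a sample point $\omega$ in the event $\{R^k = 1\} \cap \{S^k = 1\}$ and translate each indicator into its defining inequality: $R^k = 1$ means $f_{min}^k \leq f^* + \epsilon - \lambda$ by \eqref{eq:succ-red} and the definition \eqref{eq: Rk}, while $S^k = 1$ means $f(\mtx{A}^k \mvec{y}^k + \mvec{p}^{k-1}) \leq f_{min}^k + \lambda$ by \eqref{eq:approxf} and the definition \eqref{eq: Sk}.

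The core step is then just to add these two inequalities:
\[
f(\mtx{A}^k \mvec{y}^k + \mvec{p}^{k-1}) \leq f_{min}^k + \lambda \leq (f^* + \epsilon - \lambda) + \lambda = f^* + \epsilon.
\]
Recalling the definition $\tilde{\mvec{x}}^k = \mtx{A}^k \mvec{y}^k + \mvec{p}^{k-1}$ in \eqref{eq: xck} (here in its random-variable form $\mvec{x}^k$), this says exactly $f(\mvec{x}^k) \leq f^* + \epsilon$. I would also note that $\mvec{x}^k$ is feasible, i.e.\ $\mvec{x}^k \in \mathcal{X}$, since $\mvec{y}^k$ is by construction a (approximate) solution of \eqref{prob: AREGO_subproblem_re}, whose constraint forces $\mtx{A}^k \mvec{y}^k + \mvec{p}^{k-1} \in \mathcal{X}$; combined with $f(\mvec{x}^k) \leq f^* + \epsilon$ and the definition \eqref{eq: G_epsilon} of $G_\epsilon$, this yields $\mvec{x}^k \in G_\epsilon$. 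Since $\omega$ was arbitrary in $\{R^k = 1\} \cap \{S^k = 1\}$, the set inclusion $\{R^k = 1\} \cap \{S^k = 1\} \subseteq \{\mvec{x}^k \in G_\epsilon\}$ follows.

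There is essentially no obstacle here --- this lemma is a bookkeeping step that records the logical skeleton of the convergence argument, and the only thing to be careful about is matching the precise bookkeeping of $\epsilon$ versus $\epsilon - \lambda$ (the reason \Cref{ass:lowerbdRPK} is phrased with $\epsilon - \lambda$ rather than $\epsilon$) and keeping the feasibility observation explicit so that membership in $G_\epsilon$, which includes the constraint $\mvec{x} \in \mathcal{X}$, is genuinely established rather than just the inequality on $f$. The hypotheses \Cref{assump: prob_of_I_R>rho} and \ref{ass:lowerbdRPK} are not actually used in the argument itself (they only guarantee the events have positive probability); they are listed in the statement merely for consistency with the surrounding lemmas, so I would simply not invoke them in the proof.
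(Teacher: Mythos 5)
Your proof is correct and follows essentially the same route as the paper: chaining $f(\mvec{x}^k)\leq f_{min}^k+\lambda$ with $f_{min}^k\leq f^*+\epsilon-\lambda$. The only cosmetic difference is that the paper unpacks $(\epsilon-\lambda)$-success via an explicit feasible witness $\mvec{y}^k_{int}$ before concluding $f_{min}^k\leq f^*+\epsilon-\lambda$, whereas you invoke \eqref{eq:succ-red} directly; your added remarks on feasibility of $\mvec{x}^k$ and on the hypotheses being superfluous for the inclusion are both accurate.
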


\begin{proof}
	By \Cref{def: success_red_prob}, if \eqref{prob: AREGO_subproblem} is $(\epsilon-\lambda)$-successful, then there exists $\mvec{y}^k_{int} \in \mathbb{R}^{d^k}$ such that $\mtx{A}^k \mvec{y}^k_{int} + \mvec{p}^{k-1} \in \mathcal{X}$ and 
	\begin{equation} \label{ineq: asym_conv_ineq1}
		f(\mtx{A}^k\mvec{y}^k_{int} + \mvec{p}^{k-1}) \leq f^* + \epsilon - \lambda.
	\end{equation}
	Since $\mvec{y}^k_{int}$ is in the feasible set of \eqref{prob: AREGO_subproblem} and $ f^k_{min}$ is the global minimum of \eqref{prob: AREGO_subproblem}, we have
	\begin{equation} \label{ineq: asym_conv_ineq3}
		f^k_{min} \leq f(\mtx{A}^k\mvec{y}^k_{int} + \mvec{p}^{k-1}).
	\end{equation}
	Then, for $\mvec{x}^k$, \eqref{eq:approxf} gives the first inequality below,
	$$ f(\mvec{x}^k) \leq f^k_{min} + \lambda \leq f(\mtx{A}^k\mvec{y}^k_{int} + \mvec{p}^{k-1}) + \lambda \leq f^* + \epsilon, $$
	where the second and third inequalities follow from \eqref{ineq: asym_conv_ineq3} and \eqref{ineq: asym_conv_ineq1}, respectively. This shows that $\mvec{x}^k \in G_\epsilon$. 
\end{proof}

\subsection{Proof of \Cref{thm: glconv}.} \label{sec:app_proof_thm65}

	Lemma \ref{lemma: if WcapG then_x in G_epsilon} and the definition of $\mvec{x}^k_{opt}$ in \eqref{eq: xoptk} provide 
	$$ \{ R^k = 1 \} \cap \{ S^k = 1 \} \subseteq \{ \mvec{x}^k \in G_{\epsilon} \} \subseteq \{ \mvec{x}_{opt}^k \in G_{\epsilon} \} $$
	for $k = 1, 2,\dots, K$ and for any integer $K\geq 1$. Hence, 
	\begin{equation}\label{rel: cup_I is in cup_X}
		\bigcup_{k=1}^K \{ R^k = 1 \} \cap \{ S^k = 1 \} \subseteq \bigcup_{k=1}^K \{ \mvec{x}^k_{opt} \in G_{\epsilon} \}.
	\end{equation}
	Note that  the sequence $\{ f(\mvec{x}^1_{opt}), f(\mvec{x}^2_{opt}), \dots, f(\mvec{x}^K_{opt})\}$ is monotonically decreasing. Therefore, if $\mvec{x}^k_{opt} \in G_{\epsilon}$ for some $k \leq K$ then $\mvec{x}^i_{opt} \in G_{\epsilon}$ for all $i = k, \dots, K$; and so the sequence $(\{ \mvec{x}^k_{opt} \in G_{\epsilon} \})_{k = 1}^K$ is an increasing sequence of events. Hence,
	\begin{equation}\label{eq:cup_x_opt_in_G=x_opt_in_G}
		\bigcup_{k=1}^K \{ \mvec{x}^k_{opt} \in G_{\epsilon} \} = \{ \mvec{x}^K_{opt} \in G_{\epsilon} \}.
	\end{equation}
	From \eqref{rel: cup_I is in cup_X} and \eqref{eq:cup_x_opt_in_G=x_opt_in_G}, we have for all $K\geq 1$,
	\begin{equation}\label{eq:prob[x_opt_in_G_eps>1-(1-tr)^K]}
		\prob[\{ \mvec{x}^K_{opt} \in G_{\epsilon} \}]  \geq \prob\Big[\bigcup_{k=1}^K \{ R^k = 1 \} \cap \{ S^k = 1 \} \Big]  \geq 1 - \Pi_{k = 1}^K (1- \tau^k \rho^k),
	\end{equation}
	where the second inequality follows from \Cref{lemma: lim_of_prob_is_1}.
	Finally, passing to the limit with $K$ in \eqref{eq:prob[x_opt_in_G_eps>1-(1-tr)^K]}, we deduce
	\[	1 \geq \lim_{K \rightarrow \infty} \prob[\{ \mvec{x}^K_{opt} \in G_{\epsilon} \}] \geq \lim_{K \rightarrow \infty} \left[1 - \Pi_{k = 1}^K (1- \tau^k \rho^k)\right] \geq \lim_{K \rightarrow \infty} \left[1 -  (1- \tau_{lb} \rho_{lb})^K \right]= 1, \]
	with $\tau_{lb}$ and $\rho_{lb}$ defined in \Cref{ass:lowerbdRPK} and \Cref{assump: prob_of_I_R>rho}, respectively. Since $\tau_{lb} \rho_{lb} > 0$ by \Cref{assump: prob_of_I_R>rho} and \Cref{ass:lowerbdRPK}, we get the required result.
	Note that if 
	\begin{equation} \label{eq:1-(1-tau rho)^K>alpha}
		1-(1-\tau_{lb}  \rho_{lb})^k \geq \xi
	\end{equation}
	then \eqref{eq:prob[x_opt_in_G_eps>1-(1-tr)^K]} implies $\prob[ \mvec{x}^k_{opt} \in G_{\epsilon} ] \geq \xi$. Since \eqref{eq:1-(1-tau rho)^K>alpha} is equivalent to
	$ k \geq \displaystyle\frac{\log(1-\xi)}{\log(1- \tau_{lb} \rho_{lb})}$, 
	\eqref{eq:1-(1-tau rho)^K>alpha} holds for all $k\geq K_\xi$ since 
	$K_\xi \geq \displaystyle\frac{\log(1-\xi)}{\log(1-\tau_{lb} \rho_{lb})}$.

\section{Problem set} \label{app:prob}

\Cref{table: Test set} contains the name, domain and global minimum of the functions used to generate the high-dimensional test set. Similarly as in \cite{Cartis2020,Cartis2020b}, the problem set contains 18 problems taken from \cite{AMPGO, Ernesto2005, Bingham2013}. To generate this problem set, we transformed each of the 18 functions in \Cref{table: Test set} into a high-dimensional function with low-effective dimension, by adapting the method proposed by Wang et al. \cite{Wang2016}. Let $\bar{g}(\bar{\mvec{x}})$ be any function from \Cref{table: Test set}, with dimension $d_e$ and let the given domain be scaled to $[-1, 1]^{d_e}$. We create a $D$-dimensional function $g(\mvec{x})$ by adding $D-d_e$ fake dimensions to $\bar{g}(\bar{\mvec{x}})$, $ g(\mvec{x}) = \bar{g}(\bar{\mvec{x}}) + 0\cdot x_{d_e+1} + 0 \cdot x_{d_e+2} + \cdots + 0\cdot x_{D}$. We further rotate the function by applying a random orthogonal matrix $\mtx{Q}$ to $\mvec{x}$ to obtain a nontrivial constant subspace. The final form of the function we test is
\begin{equation}\label{eq: f=g(Qx)}
	f(\mvec{x}) = g(\mtx{Q}\mvec{x}).
\end{equation}
Note that the first $d_e$ rows of $\mtx{Q}$ now span the effective subspace $\mathcal{T}$ of $f(\mvec{x})$.

For each problem in the test set, we generate three functions $f$ according to \eqref{eq: f=g(Qx)}, one for each $D = 10$, $100$, $1000$. Note that the range of effective dimension covered by our test set is slightly larger than in \cite{Cartis2020, Cartis2020b}, to better assess the ability of the algorithm to learn $d_e$.

\begin{table}[!ht]
	\centering
	\caption{The problem set listed in alphabetical order.}
	\label{table: Test set}
	\begin{tabular} {|L{4.1cm} | C{3cm} | C{3.3cm} | }
		\hline
		Function & Domain  & Global minima  \\ \hline
		1) Beale \cite{Ernesto2005}    & $\mvec{x} \in [-4.5,4.5]^2$ &  $g(\mvec{x}^*) = 0$ \\ \hline
		2) Branin \cite{Ernesto2005}    & \pbox{20cm}{$x_1 \in [-5,10]$ \\ $x_2 \in [0, 15]$} 
		& $g(\mvec{x}^*) = 0.397887$ \\ \hline
		
		3) Brent \cite{AMPGO}  & $\mvec{x} \in [-10,10]^2$ & $g(\mvec{x}^*) = 0$  \\ \hline
		
		
		5) Easom  \cite{Ernesto2005} & $\mvec{x}\in [-100,100]^2$   & $g(\mvec{x}^*) = -1$ \\ \hline
		
		6) Goldstein-Price \cite{Ernesto2005} & $\mvec{x} \in [-2,2]^2 $  & $g(\mvec{x}^*) = 3$ \\ \hline
		
		7) Hartmann 3 \cite{Ernesto2005} & $\mvec{x} \in [0,1]^3$ & $g(\mvec{x}^*) = -3.86278$
		\\ \hline
		
		8) Hartmann 6 \cite{Ernesto2005}  & $\mvec{x} \in [0,1]^6$  & $g(\mvec{x}^*) = -3.32237$
		\\ \hline
		
		9) Levy \cite{Bingham2013}  & $\mvec{x} \in [-10,10]^6$ & $g(\mvec{x}^*) = 0$  \\ \hline
		
		10) Perm 4, 0.5 \cite{Bingham2013} & $\mvec{x} \in [-4,4]^4$ & $g(\mvec{x}^*) = 0$  \\ \hline
		
		11) Rosenbrock \cite{Bingham2013}   & $\mvec{x} \in [-5,10]^7$ & $g(\mvec{x}^*) = 0$  \\ \hline
		
		12) Shekel $5$ \cite{Bingham2013}  & $\mvec{x} \in [0,10]^4$ & $ g(\mvec{x}^*) = -10.1532$
		\\ \hline
		
		13) Shekel $7$ \cite{Bingham2013}  & $\mvec{x} \in [0,10]^4$ & $g(\mvec{x}^*) = -10.4029$
		\\ \hline
		
		14) Shekel $10$ \cite{Bingham2013} & $\mvec{x} \in [0,10]^4$ & $g(\mvec{x}^*) = -10.5364$
		\\ \hline
		
		15) Shubert \cite{Bingham2013}  & $\mvec{x} \in [-10,10]^2$ & $g(\mvec{x}^*) = -186.7309$ \\ \hline
		
		16) Six-hump camel \cite{Bingham2013} & \pbox{20cm}{$x_1 \in [-3,3]$ \\ $x_2 \in [-2,2]$} & $g(\mvec{x}^*) = -1.0316$ \\ \hline
		
		17) Styblinski-Tang \cite{Bingham2013}   & $\mvec{x} \in [-5,5]^8$ & $g(\mvec{x}^*) = -313.329$  \\ \hline
		
		18) Trid  \cite{Bingham2013} & $\mvec{x} \in [-25,25]^5$ & $g(\mvec{x}^*) = -30$ \\ \hline
		
		19) Zettl \cite{Ernesto2005}   & $\mvec{x} \in [-5,5]^2$ & $g(\mvec{x}^*) = -0.00379$  \\ \hline
	\end{tabular}
\end{table}

\end{document}